\theoremstyle{plain}
\newtheorem{theorem}{Theorem}[section]
\newtheorem{lemma}[theorem]{Lemma}
\newtheorem{proposition}[theorem]{Proposition}
\newtheorem{corollary}[theorem]{Corollary}
\theoremstyle{definition}
\newtheorem{definition}[theorem]{Definition}
\newtheorem{question}[theorem]{Question}
\newcommand{\mini}{\mathsf{min}}
\newcommand{\mesh}{\mathsf{mesh}}
\newcommand{\diam}{\mathsf{diam}}
\newcommand{\di}{\mathsf{d}}
\newcommand{\Bi}{\mathsf{B}}
\newcommand{\id}{\mathsf{id}}
\newcommand{\cl}{\mathsf{cl}}
\newcommand{\Gd}{\mathsf{G_\delta}}
\newcommand{\Fs}{\mathsf{F_\sigma}}
\newcommand{\KR}{\mathsf{KR}}
\newcommand{\CDH}{\mathsf{CDH}}
\newcommand{\CB}{\mathsf{CB}}
\newcommand{\CH}{\mathsf{CH}}
\newcommand{\ZFC}{\mathsf{ZFC}}
\newcommand{\Ell}{\mathsf{L}}
\newcommand{\MA}{\mathsf{MA}}
\newcommand{\MAc}{\mathsf{MA}(\mathrm{countable})}
\newcommand{\SLH}{\mathsf{SLH}}
\newcommand{\Aa}{\mathcal{A}}
\newcommand{\BB}{\mathcal{B}}
\newcommand{\FF}{\mathcal{F}}
\newcommand{\KK}{\mathcal{K}}
\newcommand{\UU}{\mathcal{U}}
\newcommand{\VV}{\mathcal{V}}
\newcommand{\WW}{\mathcal{W}}
\newcommand{\RRR}{\mathbb{R}}
\newcommand{\QQQ}{\mathbb{Q}}
\begin{document}

\title[Countable dense homogeneity in powers]{Countable dense homogeneity in powers of zero-dimensional definable spaces}

\author{Andrea Medini}
\address{Kurt G\"odel Research Center for Mathematical Logic
\newline\indent University of Vienna
\newline\indent W\"ahringer Stra{\ss}e 25
\newline\indent A-1090 Wien, Austria}
\email{andrea.medini@univie.ac.at}
\urladdr{http://www.logic.univie.ac.at/\~{}medinia2/}

\date{April 22, 2015}

\thanks{The author acknowledges the support of the FWF grant I 1209-N25.}

\begin{abstract}
We show that, for a coanalytic subspace $X$ of $2^\omega$, the countable dense homogeneity of $X^\omega$ is equivalent to $X$ being Polish. This strengthens a result of Hru\v{s}\'ak and Zamora Avil\'es. Then, inspired by results of Hern\'andez-Guti\'errez, Hru\v{s}\'ak and van Mill, using a technique of Medvedev, we construct a non-Polish subspace $X$ of $2^\omega$ such that $X^\omega$ is countable dense homogeneous. This gives the first $\ZFC$ answer to a question of Hru\v{s}\'ak and Zamora Avil\'es. Furthermore, since our example is consistently analytic, the equivalence result mentioned above is sharp. Our results also answer a question of Medini and Milovich. Finally, we show that if every countable subset of a zero-dimensional separable metrizable space $X$ is included in a Polish subspace of $X$ then $X^\omega$ is countable dense homogeneous.
\end{abstract}

\maketitle

\section{Introduction}

As is common in the literature about countable dense homogeneity, by \emph{space} we will always mean ``separable metrizable topological space''. By \emph{countable} we will always mean ``at most countable''. Our reference for general topology is \cite{vanmilli}. Our reference for descriptive set theory is \cite{kechris}. For all other set-theoretic notions, we refer to \cite{kunen}. Recall the following definitions. A space is \emph{Polish} if it admits a complete metric. A subspace of a Polish space is \emph{analytic} if it is the continuous image of a Polish space, and it is \emph{coanalytic} if its complement is analytic. A space $X$ is \emph{countable dense homogeneous} (briefly, $\CDH$) if for every pair $(A,B)$ of countable dense subsets of $X$ there exists a homeomorphism $h:X\longrightarrow X$ such that $h[A]=B$.

The fundamental positive result in the theory of $\CDH$ spaces is the following (see \cite[Theorem 5.2]{andersoncurtisvanmill}). In particular, it shows that the Cantor set $2^\omega$, the Baire space $\omega^\omega$, the Euclidean spaces $\RRR^n$, the spheres $S^n$ and the Hilbert cube $[0,1]^\omega$ are all examples of $\CDH$ spaces. See \cite[Sections 14-16]{arkhangelskiivanmill} for much more on this topic. Recall that a space $X$ is \emph{strongly locally homogeneous} (briefly, $\SLH$) if there exists a base $\BB$ for $X$ such that for every $U\in\BB$ and $x,y\in U$ there exists a homeomorphism $h:X\longrightarrow X$ such that $h(x)=y$ and $h\upharpoonright (X\setminus U)=\id_{X\setminus U}$.

\begin{theorem}[Anderson, Curtis, van Mill]\label{maincdh}
Every Polish $\SLH$ space is $\CDH$.
\end{theorem}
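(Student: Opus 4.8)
The plan is to fix a pair $(A,B)$ of countable dense subsets of a Polish $\SLH$ space $X$, enumerate $A=\{a_n:n\in\omega\}$ and $B=\{b_n:n\in\omega\}$, fix a compatible complete metric $d\leq 1$ on $X$ together with a base $\BB$ witnessing strong local homogeneity, and then construct the desired homeomorphism as a limit $h=\lim_n g_n\circ\cdots\circ g_0$ of finite left-compositions of homeomorphisms $g_n$ having very small ``support''. Back-and-forth bookkeeping will guarantee that $h[A]=B$, while a careful choice of the diameters of the supports will guarantee that the limit exists and is itself a homeomorphism. Throughout I write $H_n=g_{n-1}\circ\cdots\circ g_0$ (with $H_0=\id$) and I maintain a finite, increasing set of \emph{frozen} pairs $(a,b)\in A\times B$ enjoying the property that $H_m(a)=b$ for every $m$ beyond the stage at which the pair was frozen; every subsequent $g_m$ will be chosen to equal the identity on all previously frozen values $b$, so that frozen pairs are never disturbed.

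At an even (forth) stage I take the least-indexed $a_i$ not yet occurring in a frozen pair and set $p=H_n(a_i)$. Since $\BB$ is a base and the finitely many frozen values differ from $p$, I may pick $U\in\BB$ with $p\in U$, with $\diam(U)$ as small as I wish, and with $\overline U$ disjoint from the frozen values; density of $B$ then yields some $b\in B\cap U$. Applying $\SLH$ to $p,b\in U$ produces $g_n$ with $g_n(p)=b$ and $g_n\upharpoonright(X\setminus U)=\id$, so that $H_{n+1}(a_i)=b$, and I freeze $(a_i,b)$. An odd (back) stage is the mirror image: I take the least-indexed $b_j$ not yet occurring in a frozen pair, fix a small $U\in\BB$ with $b_j\in U$ avoiding the frozen values, and use density of $H_n[A]$ to choose an unfrozen $a\in A$ with $s=H_n(a)\in U$; then $\SLH$ supplies $g_n$ moving $s$ to $b_j$ and fixing $X\setminus U$, so that $H_{n+1}(a)=b_j$, and I freeze $(a,b_j)$. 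Because every later $g_m$ fixes all frozen values, the alternation guarantees that each $a_i$ eventually becomes a first coordinate and each $b_j$ eventually a second coordinate of a frozen pair; hence $h[A]=B$, provided the limit $h$ exists and is a homeomorphism.

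The crux is precisely this last provision, and the natural tool is the complete metric $\hat d(f,g)=\sup_x d(f(x),g(x))+\sup_x d(f^{-1}(x),g^{-1}(x))$ on the homeomorphism group of the Polish space $X$, under which Cauchy sequences have homeomorphic limits. Since $g_n$ is the identity off $U$ and carries $U$ onto $U$, one has $\sup_x d(g_n(x),x)\leq\diam(U)$, whence $\sup_x d(H_{n+1}(x),H_n(x))\leq\diam(U)$; this direction is painless. The genuine obstacle is the inverse: as $H_{n+1}^{-1}=H_n^{-1}\circ g_n^{-1}$ agrees with $H_n^{-1}$ off $U$, one gets $\sup_x d(H_{n+1}^{-1}(x),H_n^{-1}(x))\leq\diam\big(H_n^{-1}[U]\big)$, and $H_n^{-1}$ may badly expand distances. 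The remedy is that $U$ is chosen last: having already built the continuous map $H_n^{-1}$, I shrink $U$ (around $p$ in the forth case, around $b_j$ in the back case) so that, beyond the constraints above, both $\diam(U)<2^{-n}$ and $\diam\big(H_n^{-1}[U]\big)<2^{-n}$ hold, which is possible by continuity of $H_n^{-1}$ at that point. Then $\hat d(H_{n+1},H_n)<2^{-n+1}$, so $(H_n)$ is $\hat d$-Cauchy and $h=\lim_n H_n$ is a homeomorphism of $X$ with $h[A]=B$. I expect the delicate point to be exactly this interleaving of the support-shrinking with the moduli of continuity of the partial compositions already constructed, since it is what forces the inverse maps to converge in the non-compact setting.
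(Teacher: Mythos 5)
The paper does not prove this theorem at all: it is quoted as background with a citation to Anderson--Curtis--van Mill, so there is no in-paper argument to compare against. Your back-and-forth construction---freezing pairs, taking $g_n$ supported on a basic set $U$ chosen \emph{after} $H_n$ so that both $\diam(U)$ and $\diam(H_n^{-1}[U])$ are below $2^{-n}$, and passing to the limit in the complete metric $\hat{\di}(f,g)=\sup_x \di(f(x),g(x))+\sup_x \di(f^{-1}(x),g^{-1}(x))$ on the homeomorphism group---is correct and is essentially the classical ``inductive convergence'' proof from the cited source.
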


This article is ultimately motivated by the second part of the following question (see \cite{fitzpatrickzhoud}), which is Problem 387 from the book ``Open problems in topology''. Recall that a space $X$ is \emph{homogeneous} if for every pair $(x,y)$ of elements of $X$ there exists a homeomorphism $h:X\longrightarrow X$ such that $h(x)=y$.
\begin{question}[Fitzpatrick, Zhou]\label{mainquestion}
Which subspaces $X$ of $2^\omega$ are such that $X^\omega$ is homogeneous? $\CDH$?
\end{question}
While the first question was answered by the following remarkable result\footnote{Subsequently, Theorem \ref{hompower} was greatly generalized by Dow and Pearl (see \cite[Theorem 2]{dowpearl}), by combining the methods of Lawrence with the technique of elementary submodels.} (see \cite[page 3057]{lawrence}), the second question is still open.
\begin{theorem}[Lawrence]\label{hompower}
Let $X$ be a subspace of $2^\omega$. Then $X^\omega$ is homogeneous.
\end{theorem}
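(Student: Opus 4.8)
The plan is to exploit two features of the ambient Cantor set: it is a topological group and it is zero-dimensional. Write the elements of $2^\omega$ additively, so that $2^\omega$ becomes a Boolean topological group under coordinatewise addition modulo $2$ and every translation is an autohomeomorphism. Identifying $(2^\omega)^\omega$ with $2^\omega$, we may regard $X^\omega$ as a zero-dimensional space carrying a canonical self-similarity $X^\omega\cong X\times X^\omega\cong(X^\omega)^\omega$. Since the composition and inverse of homeomorphisms are homeomorphisms, it suffices to move an arbitrary point $p=(p_n)_{n\in\omega}$ of $X^\omega$ to an arbitrary point $q=(q_n)_{n\in\omega}$.

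First I would reduce homogeneity to a neighborhood-matching statement. Using zero-dimensionality, it is enough to produce nested sequences of clopen neighborhoods $X^\omega=N_0\supseteq N_1\supseteq\cdots$ of $p$ and $X^\omega=M_0\supseteq M_1\supseteq\cdots$ of $q$, with $\bigcap_i N_i=\{p\}$, $\bigcap_i M_i=\{q\}$, $\diam(N_i)\to 0$ and $\diam(M_i)\to 0$, such that the successive clopen ``annuli'' match up to homeomorphism: $N_i\setminus N_{i+1}\cong M_i\setminus M_{i+1}$ for every $i$. Indeed, declaring $p\mapsto q$ and mapping each annulus onto its partner by a homeomorphism assembles into a bijection of $X^\omega$ onto itself; it is continuous at $p$ and at $q$ because the diameters tend to $0$, and a homeomorphism elsewhere because the annuli are clopen. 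Both diameter requirements can be arranged along the construction.

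The heart of the matter is to control the homeomorphism type of these annuli, and this is where the infinitely many coordinates must enter. A basic clopen neighborhood of $p$ has the form $(C_0\times\cdots\times C_{k-1})\times X^\omega$, where each $C_n$ is a clopen neighborhood of $p_n$ in $X$; shrinking it amounts to enlarging $k$ and refining the $C_n$. Because the distinguished coordinates of $p$ and of $q$ may fall into clopen pieces of $X$ of genuinely different homeomorphism type, the matching cannot be done coordinate by coordinate. The key phenomenon I would isolate is an \emph{absorption principle}: for every nonempty clopen $C\subseteq X$ one has $C\times X^\omega\cong X^\omega$, so that the finitely many active coordinates contribute nothing once the tail $X^\omega$ is present. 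The intuition is that each of the infinitely many tail coordinates already ranges over all of $X$, so a finite modification can be diluted across the tail by combining coordinate permutations with the single-coordinate translations furnished by the group structure of $2^\omega$. Granting absorption, every nonempty clopen subset of $X^\omega$ (being a finite disjoint union of boxes, and using $X^\omega\cong X^\omega\sqcup X^\omega$) is homeomorphic to $X^\omega$; hence all annuli above may be taken homeomorphic to $X^\omega$, and the matching becomes immediate.

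The main obstacle is precisely the absorption principle for an \emph{arbitrary} subspace $X$ of $2^\omega$. For well-behaved $X$ it can be read off from classification theorems; for instance, when $X$ is Polish the power $X^\omega$ is often one of the canonical zero-dimensional spaces, such as $2^\omega$ (when $X$ is compact and perfect) or $\omega^\omega$ (for $X\cong\omega\times 2^\omega$), whose nonempty clopen subsets are all homeomorphic to the whole space. No such crutch is available in general, since $X$ may be non-Borel. Making the dilution uniform over every $X\subseteq 2^\omega$ is the genuinely delicate point, and it is here that I expect to need the combinatorial device of Lawrence---organizing the coordinates into a tree of clopen pieces and amalgamating finite approximations---or, following Dow and Pearl, an elementary submodel $M$ containing $X$, $p$ and $q$, through which the finite approximations can be reflected so as to guarantee that the clopen pieces matched during the back-and-forth are honestly homeomorphic.
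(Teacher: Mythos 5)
The paper does not actually prove this theorem: it is quoted from Lawrence's article (with a footnote pointing to the Dow--Pearl generalization), so there is no internal proof to compare your attempt against, and it must be judged on its own. So judged, it has a genuine gap --- one you essentially acknowledge yourself. The annulus-matching reduction in your second paragraph is fine and standard for zero-dimensional first-countable spaces: if one produces clopen neighborhood bases $\{N_i\}$ at $p$ and $\{M_i\}$ at $q$ with $N_i\setminus N_{i+1}\approx M_i\setminus M_{i+1}$ for all $i$, the assembled bijection is a homeomorphism carrying $p$ to $q$. But everything then hinges on the ``absorption principle'' $C\times X^\omega\approx X^\omega$ for every non-empty clopen $C\subseteq X$, which you explicitly ``grant'' rather than prove. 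That principle is not a lemma to be waved at: together with $X^\omega\approx X^\omega\oplus X^\omega$ it amounts to the h-homogeneity of $X^\omega$, and Proposition \ref{strengthenhhom} of this very paper shows that establishing h-homogeneity of $X^\omega$ for an arbitrary $X\subseteq 2^\omega$ is itself carried out via the Dow--Pearl theorem, i.e.\ via a strengthening of the statement you are trying to prove. Your argument therefore reduces Lawrence's theorem to something at least as hard and then stops, deferring the difficulty to ``the combinatorial device of Lawrence'' or ``an elementary submodel''; that deferred step is the entire content of the theorem.

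Two smaller points. First, the group structure of $2^\omega$ buys you nothing here: translation by $t$ carries $X$ onto $X+t$, which for an arbitrary subspace $X$ need not equal $X$, so ``single-coordinate translations'' are not self-homeomorphisms of $X^\omega$ and cannot be used to dilute a finite modification across the tail. Second, your parenthetical claim that every non-empty clopen subset of $X^\omega$ is a finite disjoint union of boxes is false when $X^\omega$ is not compact; this happens to be harmless for the specific annuli you need (differences of basic boxes), but it is not available as a general structural fact.
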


However, if one focuses on \emph{definable} spaces, it is possible to obtain the following result (see \cite[Corollary 2.4]{hrusakzamoraaviles}).
\begin{theorem}[Hru\v{s}\'ak, Zamora Avil\'es]\label{borelcdh}
Let $X$ be a Borel subspace $2^\omega$. If $X$ is $\CDH$ then $X$ is Polish.
\end{theorem}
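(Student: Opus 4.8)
The plan is to establish the contrapositive: if $X$ is a Borel subspace of $2^\omega$ that is not Polish, then $X$ is not $\CDH$. I will freely use the classical theorem that a subspace of a Polish space is Polish exactly when it is $\Gd$, so ``not Polish'' means ``not $\Gd$ in $2^\omega$''. The engine driving the construction is the Hurewicz dichotomy for Borel sets: a Borel subspace of a Polish space is either $\Gd$ or contains a closed copy of $\QQQ$. Applying it to $X$ produces a set $Q\subseteq X$, closed in $X$, with $Q$ homeomorphic to $\QQQ$.

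Next I would extract a concrete obstruction from $Q$. Since $Q$ is countable, dense-in-itself and zero-dimensional, its closure $K:=\cl_{2^\omega}(Q)$ is a Cantor set, and because $Q$ is closed in $X$ one checks that $K\cap X=Q$. Thus $X$ meets the perfect compact set $K$ in the countable dense set $Q$, which is meager in $K$. This already re-proves non-Polishness in a usable form, via the following fact that I would isolate as a lemma: if $Y\subseteq 2^\omega$ meets some Cantor set $K$ in a countable dense subset of $K$, then $Y$ is not $\Gd$ (otherwise $K\cap Y$ would be a dense $\Gd$, hence comeager, subset of $K$, contradicting that it is countable and therefore meager). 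The point of phrasing it this way is that the same lemma certifies non-Polishness of complements $X\setminus D$, which is where the $\CDH$ hypothesis will be attacked.

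The heart of the argument is to attach to each countable dense $D\subseteq X$ an intrinsic, homeomorphism-invariant quantity and to realize two different values of it; since a homeomorphism $h$ of $X$ with $h[A]=B$ restricts to a homeomorphism $X\setminus A\to X\setminus B$, the topological (completeness) type of the complement $X\setminus D$ is such an invariant. I would build a countable dense $B$ disjoint from $Q$ by a back-and-forth enumeration; then $(X\setminus B)\cap K=Q$, so the lemma shows $X\setminus B$ is not Polish. Against this I would build a countable dense $A$ for which $X\setminus A$ \emph{is} Polish, so that no homeomorphism of $X$ can send $A$ to $B$, and $X$ fails to be $\CDH$.

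The main obstacle is exactly producing the second, ``good'' dense set $A$, and this is where the global nature of $\CDH$ collides with the local nature of the witness $Q$. Deleting the single witness $Q$ makes $X\setminus A$ complete only if the failure of $X$ to be $\Gd$ is concentrated on a countably-deletable set, whereas in general $X$ may contain a copy of $\QQQ$ through every point; moreover the defining ``closure-trace'' property $K\cap X=Q$ is not preserved by homeomorphisms of $X$, so it cannot itself serve as the invariant. I expect to resolve this by first localizing to a clopen box $V$ of $2^\omega$ on which $Q$ is nowhere dense, and by choosing the invariant to be a Baire-category statement about $X\setminus D$ relative to the Cantor scaffold $K$ rather than outright global Polishness, so that the two constructed dense sets provably land on opposite sides of the dividing line. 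Carrying out this construction and the verification that the invariant genuinely differs is the step that will require the most care.
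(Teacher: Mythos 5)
The paper does not actually prove this theorem --- it quotes it from Hru\v{s}\'ak and Zamora Avil\'es --- but all the ingredients of the standard proof appear in Sections 2--4, and measured against that argument your proposal has a genuine gap which you yourself flag but do not close. Your opening move is right: for a Borel (hence coanalytic) non-Polish $X$, Theorem \ref{hurewicz} produces a closed-in-$X$ copy $Q$ of $\QQQ$, and your lemma about Cantor sets meeting $Y$ in a countable dense set is correct. The problem is the invariant you choose to separate two countable dense sets, namely the completeness type of $X\setminus D$. As you suspect, this invariant is constant on many non-Polish Borel spaces: for $X=\QQQ\times 2^\omega$ the complement $X\setminus D$ of \emph{any} countable dense $D$ still contains a closed-in-$X$ copy of $\QQQ$ (a fiber $\QQQ\times\{z\}$ with $z$ outside the countable second projection of $D$), so $X\setminus D$ is never Polish and your two dense sets cannot land on opposite sides of the line. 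Your proposed repair --- a category statement ``relative to the Cantor scaffold $K$'' --- is not an invariant of the pair $(X,D)$ at all, since $K$ lives in the ambient $2^\omega$ and a homeomorphism of $X$ need not extend to it or respect it; you make exactly this objection to the trace property $K\cap X=Q$, and it applies equally to the repair.

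The standard proof replaces your invariant by two different ones, after a case split your proposal omits. Since $X$ is Borel it has the property of Baire in the restricted sense (Proposition \ref{sigmaanalytic}), so by Proposition \ref{dichotomy} either $X$ has a dense Polish subspace $G$ or $X$ is not Baire. In the first case the distinguishing property is a property of the dense set itself, not of its complement: ``$D$ contains a closed-in-$X$ copy of $\QQQ$''. Choosing $A\subseteq G$ guarantees $A$ fails it (a closed-in-$X$ subset of $A$ is closed in $G$, which is $\CB$), while $B\supseteq Q$ satisfies it, and any homeomorphism with $h[A]=B$ would pull $Q$ back into $A$; this is exactly Proposition \ref{notcdh}, and it entirely sidesteps the need for $X\setminus A$ to be Polish. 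In the second case one first shows that a non-Baire $\CDH$ space is meager (Fitzpatrick--Zhou), and then the invariant is ``$D$ is $\Gd$ in $X$'': Proposition \ref{meagerdenseGdelta} and Corollary \ref{cdhlambda} force $X$ to be a $\lambda$-set, hence to contain no copy of $2^\omega$, hence (being Borel) to be countable and crowded, hence homeomorphic to $\QQQ$ and not $\CDH$. Your back-and-forth construction and the localization to a clopen box produce neither of these invariants, so the heart of the argument is still missing.
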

Furthermore, there exist consistent examples of an analytic subspace of $2^\omega$ and a coanalytic subspace of $2^\omega$ that are $\CDH$ but not Polish (see \cite[Theorem 2.6]{hrusakzamoraaviles}), which show that Theorem \ref{borelcdh} is sharp. Such definable examples could not have been constructed in $\ZFC$ because, under the axiom of Projective Determinacy, Theorem \ref{borelcdh} extends to all projective subspaces of $2^\omega$ (see \cite[Corollary 2.7]{hrusakzamoraaviles}).

Using Theorem \ref{borelcdh} (see also the proof of Theorem \ref{coanalyticcdhpower}), it is possible to obtain the following result (see \cite[Theorem 3.2]{hrusakzamoraaviles}), which was the first breakthrough on the second part of Question \ref{mainquestion}.
\begin{theorem}[Hru\v{s}\'ak, Zamora Avil\'es]\label{borelcdhpower}
Let $X$ be a Borel subspace of $2^\omega$. Then the following are equivalent.
\begin{itemize}
\item $X$ is Polish.
\item $X^\omega$ is $\CDH$.
\end{itemize}
\end{theorem}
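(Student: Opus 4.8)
The plan is to prove the two implications separately, relying on the classification of the canonical zero-dimensional Polish spaces for the forward direction and on Theorem~\ref{borelcdh} for the reverse.

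For the implication that $X$ Polish implies $X^\omega$ is $\CDH$, I would first note that $X$, being a subspace of $2^\omega$, is zero-dimensional, and being Polish it is completely metrizable, so $X^\omega$ is a zero-dimensional Polish space (the empty and one-point cases being trivially $\CDH$, assume $X$ has at least two points). The key observation is that such a power collapses to one of only two spaces up to homeomorphism. If $X$ is compact, then $X^\omega$ is compact, perfect and zero-dimensional, hence homeomorphic to $2^\omega$ by Brouwer's characterization of the Cantor set. If $X$ is not compact, then I claim $X^\omega$ is nowhere locally compact: any neighborhood of a point contains a basic clopen set $\prod_i U_i$ with $U_i=X$ for all but finitely many $i$, and applying the projection $\pi_j$ to a free coordinate $j$ gives $\pi_j\left[\prod_i U_i\right]=X$, so no neighborhood can have compact closure since $X$ is not compact. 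A nonempty, zero-dimensional, nowhere locally compact Polish space is homeomorphic to $\omega^\omega$ by the Alexandrov--Urysohn characterization of the Baire space. In either case $X^\omega$ is homeomorphic to $2^\omega$ or to $\omega^\omega$, both of which are $\CDH$ (as recalled after Theorem~\ref{maincdh}), so $X^\omega$ is $\CDH$.

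For the reverse implication, suppose $X^\omega$ is $\CDH$. Since $X$ is Borel in $2^\omega$, the set $X^\omega=\bigcap_i \pi_i^{-1}[X]$ is Borel in $(2^\omega)^\omega$, and fixing a homeomorphism $(2^\omega)^\omega\cong 2^\omega$ exhibits $X^\omega$ as a Borel subspace of $2^\omega$. As $\CDH$ is a topological invariant, Theorem~\ref{borelcdh} applies and yields that $X^\omega$ is Polish. Finally, the diagonal $\Delta=\{(x,x,x,\ldots):x\in X\}$ is a closed subspace of $X^\omega$ (using that $X$ is metrizable) and is homeomorphic to $X$; since a closed subspace of a Polish space is Polish, $X$ is Polish.

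The reverse implication is essentially immediate once Theorem~\ref{borelcdh} is invoked, so the real content lies in the forward direction, and there the step I expect to require the most care is the dichotomy itself: verifying nowhere local compactness of $X^\omega$ in the noncompact case and then correctly invoking the Brouwer and Alexandrov--Urysohn characterizations, together with the degenerate small cases. I would emphasize that this route deliberately sidesteps strong local homogeneity and Lawrence's theorem: one could instead attempt to prove directly that $X^\omega$ is $\SLH$ and apply Theorem~\ref{maincdh}, but establishing $\SLH$ would force one to confront the homogeneity of arbitrary clopen boxes $U_0\times\cdots\times U_{k-1}\times X^\omega$, which is considerably more delicate. The classification shortcut is available precisely because the infinite power $X^\omega$ of a zero-dimensional Polish space always collapses to one of the two universal zero-dimensional Polish spaces.
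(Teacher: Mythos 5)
Your proof is correct and follows essentially the same route the paper indicates for this theorem: the forward direction is the same compact/nowhere-locally-compact dichotomy reducing $X^\omega$ to $2^\omega$ or $\omega^\omega$ (as in the proof of Theorem~\ref{coanalyticcdhpower}), and the reverse direction is the intended application of Theorem~\ref{borelcdh} to the Borel space $X^\omega$ followed by recovering $X$ as a closed (diagonal) subspace. The only cosmetic difference is that you cite the $\CDH$-ness of $2^\omega$ and $\omega^\omega$ directly rather than passing through $\SLH$ and Theorem~\ref{maincdh}; this is immaterial.
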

As above, it is easy to realize that, under the axiom of Projective Determinacy, Theorem \ref{borelcdhpower} extends to all projective subspaces of $2^\omega$.

At this point, it seems natural to wonder whether the ``Borel'' assumption in the above theorem can be dropped. In other words, is being Polish the characterization that we are looking for? This is precisely what the following question asks (see \cite[Question 3.2]{hrusakzamoraaviles}).
\begin{question}[Hru\v{s}\'ak, Zamora Avil\'es]\label{Gdeltaquestion}
Is there a non-Polish subspace $X$ of $2^\omega$ such that $X^\omega$ is $\CDH$?
\end{question}

The following (see \cite[Theorem 21]{medinimilovich}) is the first consistent answer\footnote{Subsequently, Hern\'andez-Guti\'errez and Hru\v{s}\'ak showed that both $\FF$ and $\FF^\omega$ are $\CDH$ whenever $\FF$ is a non-meager $\mathsf{P}$-filter on $\omega$ (see \cite[Theorem 1.6]{hernandezgutierrezhrusak}). In fact, as it was recently shown by Kunen, Medini and Zdomskyy, a filter on $\omega$ is $\CDH$ if and only if it is a non-meager $\mathsf{P}$-filter (see \cite[Theorem 10]{kunenmedinizdomskyy}). However, it is a long-standing open problem whether non-meager $\mathsf{P}$-filters exist in $\ZFC$ (see \cite{justmathiasprikrysimon} or \cite[Section 4.4.C]{bartoszynskijudah}).} to the above question, where ultrafilters on $\omega$ are viewed as subspaces of $2^\omega$ through characteristic functions.
\begin{theorem}[Medini, Milovich]\label{consistentanswer}
Assume that $\MAc$ holds. Then there exists a non-principal ultrafilter $\UU$ on $\omega$ such that $\UU^\omega$ is $\CDH$.
\end{theorem}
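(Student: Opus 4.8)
The plan is to construct $\UU$ directly, by a recursion of length $\cccc$, and then to verify $\CDH$ of $\UU^\omega$ by hand through a back-and-forth construction of homeomorphisms. View $\UU$ as a subspace of $2^\omega$, so that $\UU^\omega$ is a dense subspace of the Cantor set $\CC=(2^\omega)^\omega\cong 2^{\omega\times\omega}$. The one piece of symmetry we get for free is this: because $\UU$ is a non-principal ultrafilter it is invariant under every homeomorphism of $2^\omega$ flipping finitely many coordinates, so $\UU^\omega$ is invariant under permutations of the $\omega$ factors of $\CC$ and under finite coordinate flips within each factor. Since $\MAc$ is equivalent to $\mathrm{cov}(\MM)=\cccc$, at each of the $\cccc$ stages of the recursion I may meet fewer than $\cccc$ dense subsets of a countable poset; I would use this to build $\UU$ as a $\mathsf{P}$-point carrying enough extra genericity that the clopen matchings described below can always be refined.

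For the $\CDH$ verification, fix two countable dense subsets $A=\{a_i:i\in\omega\}$ and $B=\{b_i:i\in\omega\}$ of $\UU^\omega$. I would build, recursively in $n$, nested finite clopen partitions $\PP_n$ of $\CC$ together with bijections $\phi_n$ matching the pieces of $\PP_n$, each refining the previous matching, so that the limit $h=\bigcap_n\phi_n$ is a homeomorphism of $\CC$; this is the standard Knaster--Reichbach (Cantor) fusion. Alternating between even and odd stages to absorb the next $a_n$ into the domain and the next $b_n$ into the range, one gets $h[A]=B$ in the limit. The extra, essential demand is that every $\phi_n$ be \emph{$\UU$-preserving}, meaning that the corresponding local homeomorphism sends the trace of $\UU^\omega$ on each piece onto the trace of $\UU^\omega$ on its image; only then will $h$ restrict to a self-homeomorphism of $\UU^\omega$.

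The main obstacle is precisely this $\UU$-preservation, and it is considerably subtler than in the classical Cantor-set case, where any clopen matching is allowed. A piece of $\PP_n$ is a cylinder deciding finitely many coordinates, and a local matching of two cylinders amounts to a rearrangement of the undecided coordinates; but an arbitrary permutation of $\omega$ does \emph{not} preserve $\UU$, so most clopen matchings fail to preserve $\UU^\omega$. The difficulty is therefore to achieve, using only $\UU$-preserving moves, enough flexibility to match arbitrary countable dense $A$ and $B$, and simultaneously to guarantee the global, non-clopen condition that the limit $h$ carries $\UU^\omega$ \emph{onto} itself rather than into $2^{\omega\times\omega}\setminus\UU^\omega$. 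This is exactly what the generic construction of $\UU$ is for: I would record, as density requirements at the recursion stages, that the $\UU$-compatible rearrangements demanded by any future back-and-forth always exist and can be iterated indefinitely, using the $\mathsf{P}$-point property to diagonalize coherently over coordinates, while non-meagerness of $\UU^\omega$ (inherited from that of $\UU$) keeps $\UU^\omega$ a Baire space, so that the countably many requirements---hitting each $a_i$ and $b_i$ and preserving the trace of $\UU^\omega$---can all be met without exhausting the available room.
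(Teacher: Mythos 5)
The paper does not prove this theorem; it only cites \cite[Theorem 21]{medinimilovich}, so your proposal has to stand on its own, and as written it does not: it is an architecture with the load-bearing step left blank. The gap is exactly where you locate it yourself. You say you will ``record, as density requirements at the recursion stages, that the $\UU$-compatible rearrangements demanded by any future back-and-forth always exist and can be iterated indefinitely,'' but you never exhibit a single countable poset or a single family of dense sets to which $\MAc$ is applied, and the requirement as phrased is not a condition on the filter base built by stage $\alpha$: it quantifies over the pairs $(A,B)$, over the partial clopen matchings, and over membership in $\UU^\omega$, all of which are determined only by the completed ultrafilter. One can enumerate the $\cccc$ candidate pairs of countable subsets of $(2^\omega)^\omega$ and treat one per stage, but then whether a candidate is actually a dense subset of $\UU^\omega$, and whether a given clopen matching preserves the trace of $\UU^\omega$, are decided by sets of integers not yet placed in or out of $\UU$; you must either make stage-$\alpha$ commitments that every later extension respects, or isolate a stage-free combinatorial property of $\UU$ (being a non-meager $\mathsf{P}$-filter is precisely such a property) from which the required $\UU$-preserving refinements can always be produced. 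Neither is carried out, and ``Baireness of $\UU^\omega$'' does not substitute for it.

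A second concrete problem: even granting $\UU$-preserving matchings at every finite stage, preserving the trace of $\UU^\omega$ on each clopen piece does not yield $h[\UU^\omega]=\UU^\omega$ in the limit, because the limit homeomorphism is also defined on the residual set of points lying in the kernels rather than in any piece (compare conditions (7)--(9) of Corollary \ref{KRcor}, which for a subspace $X$ demand that the complements of $\bigcup_n E_n$ and $\bigcup_n F_n$ sit inside $X$). For an ultrafilter, $\UU^\omega$ contains no dense Polish subspace of $(2^\omega)^\omega$, so the mechanism of Theorem \ref{main} is unavailable and a genuinely filter-theoretic argument ($\mathsf{P}$-point diagonalization across the coordinates) is needed to control where the residual points go; you flag this but do not resolve it. For what it is worth, the shortest correct route today is the one in the footnote to the theorem: $\MAc$ gives $\mathrm{cov}(\MM)=\cccc$, hence $\dddd=\cccc$, hence $\mathsf{P}$-points exist by Ketonen's theorem; every non-principal ultrafilter is non-meager; and $\FF^\omega$ is $\CDH$ for every non-meager $\mathsf{P}$-filter $\FF$ by the theorem of Hern\'andez-Guti\'errez and Hru\v{s}\'ak. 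Your sketch gestures at that statement's proof without supplying it.
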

Since a non-principal ultrafilter on $\omega$ can never be analytic or coanalytic (see \cite[Section 2]{medinimilovich}), the following question seems natural (see \cite[Question 6]{medinimilovich}).
\begin{question}[Medini, Milovich]\label{optimizedefinability}
Is there a non-Polish analytic subspace $X$ of $2^\omega$ such that $X^\omega$ is $\CDH$? Coanalytic? 
\end{question}

We will give a stronger version of Theorem \ref{borelcdhpower} (namely, Theorem \ref{coanalyticcdhpower}) and show that this version is sharp (see Theorem \ref{mainexample}), while simultaneously answering Question \ref{Gdeltaquestion} and Question \ref{optimizedefinability}. The countable dense homogeneity of the example given by Theorem \ref{mainexample} will follow from Theorem \ref{main}, whose proof uses the technique of Knaster-Reichbach covers. Finally, by combining Theorem \ref{main} with several results about $\omega$-th powers, we will obtain a simple sufficient condition for the countable dense homogeneity of $X^\omega$ (see Theorem \ref{sufficient}).

\section{Some preliminary notions}

Recall that a space is \emph{crowded} if it is non-empty and it has no isolated points. Given spaces $X$ and $Y$, we will write $X\approx Y$ to mean that $X$ and $Y$ are homeomorphic. Given a space $Z$, we will say that a subspace $S$ of $Z$ is a \emph{copy} of a space $X$ if $S\approx X$. The following four classical results are used freely throughout this entire article (see \cite[Theorem 1.5.5]{vanmilli} and \cite[Theorem 1.9.8 and Corollary 1.9.9]{vanmilli}, \cite[Theorem A.6.3]{vanmilli}, \cite[Theorem 13.6]{kechris} and \cite[Lemma A.6.2]{vanmilli} respectively).
\begin{theorem}
Let $X$ be a zero-dimensional space.
\begin{itemize}
\item If $X$ is compact and crowded then $X\approx 2^\omega$.
\item If $X$ is Polish and nowhere locally compact then $X\approx\omega^\omega$.
\end{itemize}
\end{theorem}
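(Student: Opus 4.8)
The plan is to prove both characterizations by the same device: constructing a scheme of clopen sets indexed by a tree and reading off a homeomorphism from its branch space. Throughout I would fix a compatible metric on $X$ and use repeatedly that, since $X$ is zero-dimensional and metrizable, every point has arbitrarily small clopen neighborhoods; hence every nonempty clopen subset of $X$ admits a partition into countably many nonempty clopen pieces of diameter below any prescribed $\epsilon>0$ (cover by small clopen sets, extract a countable subcover by second countability, and disjointify).

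For the first item, I would build a Cantor scheme $\{C_s:s\in\ct\}$ of nonempty clopen subsets of $X$ with $C_\emptyset=X$, with $C_{s^\frown 0}$ and $C_{s^\frown 1}$ partitioning $C_s$, and with $\maxi\{\diam(C_s):|s|=n\}\to 0$. Here compactness together with zero-dimensionality provides finite clopen partitions of arbitrarily small mesh, while crowdedness provides the binary branching: a nonempty clopen subset with no isolated points is infinite, so it splits into two nonempty clopen pieces. Interleaving these two operations—organizing each small-mesh finite partition into finitely many binary splits, and splitting further whenever a node is not yet a leaf—yields the scheme with vanishing mesh. By compactness each branch $\sigma\in2^\omega$ determines a single point $\bigcap_n C_{\sigma\upharpoonright n}$; the induced map $2^\omega\to X$ is a continuous bijection, and a continuous bijection between compact Hausdorff spaces is a homeomorphism.

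For the second item I would construct a Lusin scheme $\{F_s:s\in\omega^{<\omega}\}$ of nonempty clopen subsets of $X$ with $F_\emptyset=X$, with $\{F_{s^\frown n}:n\in\omega\}$ a partition of $F_s$ into infinitely many nonempty clopen pieces, and with $\diam(F_s)<2^{-|s|}$ for $|s|\ge 1$. Since the pieces are clopen, nested and of vanishing diameter, completeness of $X$ (rather than compactness) guarantees that each branch $\sigma\in\omega^\omega$ determines a single point $\bigcap_n F_{\sigma\upharpoonright n}$; the partition property makes $\sigma\mapsto\bigcap_n F_{\sigma\upharpoonright n}$ a bijection onto $X$, and the clopen scheme structure makes it a homeomorphism.

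The crux is the splitting step in the second construction: every node must be partitioned into \emph{infinitely} many nonempty clopen pieces of small diameter, and this is exactly where nowhere local compactness enters. Two observations make it work. First, nowhere local compactness is inherited by clopen subsets: if $F$ is clopen and some $x\in F$ had a compact neighborhood in $F$, then, $F$ being open in $X$, that neighborhood would witness local compactness of $X$ at $x$; hence every $F_s$ is again nowhere locally compact, and in particular non-compact, so the construction can recurse. Second, a non-compact clopen piece $F$ is, being closed in $X$, Polish and therefore not totally bounded, so there is a scale below which no finite family of small-diameter sets covers $F$; consequently the canonical countable clopen partition of $F$ into pieces of that (or any smaller) diameter is forced to be infinite. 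Taking the scale at node $s$ to be the smaller of $2^{-(|s|+1)}$ and this critical scale delivers the required infinite partition into pieces of diameter below $2^{-(|s|+1)}$, each of which is again clopen and hence nowhere locally compact, closing the recursion.
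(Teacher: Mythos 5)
The paper does not prove this statement at all---it cites it as classical (Brouwer's characterization of $2^\omega$ and the Alexandrov--Urysohn characterization of $\omega^\omega$, from van Mill's and Kechris's books)---and your Cantor-scheme/Lusin-scheme argument is exactly the standard proof given in those sources; it is correct, including the two key points that clopen subsets inherit nowhere local compactness and that failure of total boundedness forces the small-mesh clopen partitions to be infinite. The one detail to make explicit is that for the second item the fixed compatible metric must be chosen \emph{complete} (total boundedness is a property of the metric, not of the topology), so that each clopen piece is complete in the restricted metric and ``non-compact hence not totally bounded'' applies, and so that the nested clopen sets of vanishing diameter along a branch have nonempty intersection.
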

\begin{theorem}
Let $X$ be a subspace of a Polish space $Z$. Then $X$ is Polish if and only if $X$ is a $\Gd$ subset of $Z$.
\end{theorem}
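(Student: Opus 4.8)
The plan is to establish the two implications separately, in each case realizing $X$ as a \emph{closed} subspace of an auxiliary Polish space. I will freely use two elementary facts: a closed subspace of a Polish space is Polish (restrict a complete metric), and a countable product of Polish spaces is Polish. Fix throughout a complete metric $\rho$ on $Z$.

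For the direction $\Gd \Rightarrow$ Polish, write $X = \bigcap_{n} U_n$ with each $U_n$ open in $Z$ (discarding any $U_n = Z$, which is harmless). I would define an embedding $e : X \longrightarrow Z \times \RRR^\omega$ by $e(x) = (x, (1/\rho(x, Z\setminus U_n))_{n})$. Since each $U_n$ is open and $x \in U_n$, the distances $\rho(x, Z\setminus U_n)$ are positive, so $e$ is well-defined, and it is plainly a topological embedding (the first coordinate recovers $x$). The crux is that $e[X]$ is closed: if $e(x_k) \to (z, (t_n)_n)$ in $Z \times \RRR^\omega$, then $x_k \to z$ in $Z$ while $1/\rho(x_k, Z\setminus U_n) \to t_n < \infty$ for each $n$, forcing $\rho(z, Z\setminus U_n) = 1/t_n > 0$, hence $z \in U_n$ for all $n$ and so $z \in X$ with limit $e(z)$. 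Thus $X \approx e[X]$ is a closed subspace of the Polish space $Z \times \RRR^\omega$, and therefore Polish.

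For the direction Polish $\Rightarrow \Gd$, fix a complete compatible metric $d$ on $X$. Since the closure $\cl(X)$ of $X$ in $Z$ is closed, hence $\Gd$, in $Z$, it suffices to show $X$ is $\Gd$ in $\cl(X)$; so I may assume $X$ is dense in $Z$. For each $n \ge 1$ I would set
\[
U_n = \bigcup \{ V : V \text{ is open in } Z \text{ and } \diam_d(V\cap X) \le 1/n \},
\]
which is open in $Z$, and claim $X = \bigcap_n U_n$. The inclusion $X \subseteq \bigcap_n U_n$ uses compatibility of $d$: around any $x \in X$ the $d$-ball of radius $1/(3n)$ is relatively open, hence of the form $V \cap X$ for some open $V \ni x$ in $Z$, witnessing $x \in U_n$. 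For the reverse inclusion, given $z \in \bigcap_n U_n$, pick open $V_n \ni z$ with $\diam_d(V_n \cap X) \le 1/n$ and put $W_n = \{w : \rho(w,z) < 1/n\} \cap V_1 \cap \cdots \cap V_n$; by density each $W_n \cap X \ne \emptyset$, so choose $x_n \in W_n \cap X$. Then $(x_n)$ is $d$-Cauchy, so it $d$-converges to some $x \in X$ by completeness, while simultaneously $\rho$-converging to $z$; as $d$ and $\rho$ induce the same topology on $X$, these limits coincide and $z = x \in X$.

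The main obstacle lies in this last step: one must simultaneously control the two metrics, ensuring that the points produced by density form a $d$-Cauchy sequence and that the $d$-limit guaranteed by completeness actually $\rho$-coincides with $z$. This is precisely where compatibility of $d$ with the subspace topology is indispensable, since it forces $d$-convergence inside $X$ to entail $\rho$-convergence and hence identification of the limits.
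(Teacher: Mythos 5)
Your proof is correct. Note that the paper itself does not prove this statement: it is listed among the classical results used freely throughout, with a citation to \cite[Theorem A.6.3]{vanmilli}, so there is no internal argument to compare against. What you give is a complete and accurate rendition of the standard textbook proof (as in Kechris or van Mill): the embedding $x\mapsto\bigl(x,(1/\rho(x,Z\setminus U_n))_n\bigr)$ realizing a $\Gd$ set as a closed subspace of $Z\times\RRR^\omega$ for one direction, and the oscillation-type set $U_n=\bigcup\{V:\diam_d(V\cap X)\leq 1/n\}$ together with the reduction to the case where $X$ is dense in $Z$ for the other. The only points you leave implicit are routine: that a $\Gd$ subset of the closed (hence $\Gd$) set $\cl(X)$ is $\Gd$ in $Z$, and that in the closedness argument the limits $t_n$ are automatically positive because $\rho(x_k,Z\setminus U_n)$ stays bounded once $Z\setminus U_n\neq\varnothing$; neither affects the validity of the argument.
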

\begin{theorem}
Let $Z$ be a Polish space. If $X$ is an uncountable Borel subspace of $Z$ then $X$ contains a copy of $2^\omega$.
\end{theorem}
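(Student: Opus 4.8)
The plan is to prove the more general statement that every uncountable \emph{analytic} subspace of a Polish space contains a copy of $2^\omega$; this suffices because every Borel set is analytic. So I would fix a complete compatible metric on $Z$, and, since $X$ is a nonempty analytic set, fix a continuous surjection $f\colon\omega^\omega\longrightarrow X$. Call an open set $U\subseteq\omega^\omega$ \emph{large} if $f[U]$ is uncountable, and note that $\omega^\omega$ itself is large because $X$ is uncountable.

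The heart of the argument is a splitting lemma: whenever $U$ is large, there exist disjoint large basic open sets $V_0,V_1\subseteq U$ with $\overline{f[V_0]}\cap\overline{f[V_1]}=\emptyset$. To prove it, I would first observe that the set of points of $f[U]$ that fail to be condensation points of $f[U]$ is countable, since each such point has a basic neighborhood meeting $f[U]$ in a countable set and $Z$ is second countable; hence $f[U]$ contains two distinct condensation points $y_0\neq y_1$. Choosing open balls $B_0\ni y_0$ and $B_1\ni y_1$ with $\overline{B_0}\cap\overline{B_1}=\emptyset$, the sets $f^{-1}[B_i]\cap U$ are open with uncountable image (as $y_i$ is a condensation point of $f[U]$), hence large; since a large open subset of $\omega^\omega$ cannot be a countable union of basic open sets all with countable image, each $f^{-1}[B_i]\cap U$ contains a large basic open $V_i$, and $f[V_i]\subseteq B_i$ yields the required separation.

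With the splitting lemma in hand, I would recursively construct a Cantor scheme $(U_s)_{s\in\ct}$ of large basic open subsets of $\omega^\omega$ so that, writing $i$ for the last digit of $s^\frown i$, we have $\overline{U_{s^\frown i}}\subseteq U_s$, the closures $\overline{U_{s^\frown 0}}$ and $\overline{U_{s^\frown 1}}$ are disjoint, and, using continuity of $f$ to shrink at each step, both $\diam(U_s)$ and $\diam(f[U_s])$ tend to $0$ along each branch. For $z\in 2^\omega$, completeness forces $\bigcap_n\overline{U_{z\upharpoonright n}}$ to be a single point $t_z\in\omega^\omega$, and I set $h(z)=f(t_z)\in X$. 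The separation of the closures of the images makes $h$ injective, the vanishing image diameters make $h$ continuous, and since $2^\omega$ is compact and $Z$ is Hausdorff, $h$ is a homeomorphism onto $h[2^\omega]\subseteq X$, the desired copy of $2^\omega$.

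I expect the main obstacle to be the splitting lemma, and specifically the simultaneous requirement that the two pieces have \emph{separated} images while each remains large; this is exactly what the passage to condensation points secures. I would also stress that the scheme must be carried out in the domain $\omega^\omega$ rather than directly among condensation points in $Z$: a naive Cantor scheme inside $Z$ would only guarantee that the limit points lie in $\overline{X}$, whereas composing with $f$ guarantees $h(z)=f(t_z)\in X$, which is what forces the resulting copy of $2^\omega$ to sit inside $X$.
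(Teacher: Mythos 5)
Your proof is correct, but it takes a different route from the one the paper relies on. The paper does not prove this statement at all: it is listed among the classical facts ``used freely throughout this entire article,'' with a citation to Kechris [Theorem 13.6], and the standard proof behind that citation goes through a change of topology --- one refines the Polish topology on $Z$ so that the Borel set $X$ becomes clopen (hence Polish in the finer topology), applies the perfect set theorem for Polish spaces to obtain a Cantor set $C\subseteq X$, and notes that the identity map back to the original topology is a continuous injection on the compact set $C$, hence an embedding. You instead prove the stronger Souslin perfect set theorem for analytic sets, running a Cantor scheme in the domain $\omega^\omega$ of a continuous surjection $f$ onto $X$ and using condensation points to split. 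All the steps check out: in the splitting lemma the sets $f^{-1}[B_i]\cap U$ are indeed large because $f[f^{-1}[B_i]\cap U]\supseteq B_i\cap f[U]$, and a large open subset of $\omega^\omega$ must contain a large basic (clopen) set; the nested cylinders with vanishing diameters determine a single point $t_z$ by completeness; and injectivity works because $t_z$ lies in $U_{z\upharpoonright n}$ itself (not merely its closure) for every $n$, so $h(z)=f(t_z)\in f[U_{z\upharpoonright n}]$ and the separated image closures do the rest. What your route buys is generality --- it covers all analytic sets, which is genuinely more than is needed here --- at the price of invoking the (classical but not entirely trivial) fact that every Borel set is analytic; the change-of-topology proof is shorter if one already has the topology-refinement machinery. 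Your closing remark about why the scheme must be built in $\omega^\omega$ rather than directly in $Z$ identifies exactly the point where a naive argument would fail.
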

\begin{proposition}
Let $I$ be a countable set. If $X_i$ is Polish for every $i\in I$ then $\prod_{i\in I}X_i$ is Polish.
\end{proposition}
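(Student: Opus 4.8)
The plan is to exhibit an explicit complete metric compatible with the product topology. Since every space in this article is separable and metrizable, and a countable product of separable metrizable spaces is again separable and metrizable, the product $\prod_{i\in I}X_i$ already carries a separable metrizable topology; hence it suffices to produce a compatible complete metric. If $I$ is finite this is routine (take the sum or maximum of the factor metrics), so assume $I$ is infinite and fix a bijective enumeration $I=\{i_n:n\in\omega\}$. For each $i\in I$ fix a complete metric $d_i$ on $X_i$; replacing $d_i$ by $\min\{d_i,1\}$ I may assume $d_i\leq 1$, as this changes neither the induced topology nor completeness. Then I would set
\[
d(x,y)=\sum_{n\in\omega}2^{-n}\,d_{i_n}\bigl(x(i_n),y(i_n)\bigr),
\]
which converges since each summand is at most $2^{-n}$; symmetry and the triangle inequality pass termwise from the $d_{i_n}$, so $d$ is a metric.

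Next I would verify that $d$ induces the product topology. For each $n$ the estimate $d_{i_n}\bigl(x(i_n),y(i_n)\bigr)\leq 2^{n}d(x,y)$ shows that the projection onto $X_{i_n}$ is $d$-continuous, so every basic open set of the product topology is $d$-open. Conversely, given $x$ and $\varepsilon>0$, I would first choose $N$ with $\sum_{n>N}2^{-n}<\varepsilon/2$ (using $d_{i_n}\leq 1$ to bound the tail), and then take the product-basic neighborhood of $x$ obtained by constraining the coordinates $i_0,\dots,i_N$; a routine computation shows that this neighborhood is contained in the $d$-ball of radius $\varepsilon$ around $x$. Hence the two topologies coincide.

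Finally I would check completeness. If $(x^{k})_{k\in\omega}$ is $d$-Cauchy, then the same estimate $d_{i_n}\bigl(x^{k}(i_n),x^{l}(i_n)\bigr)\leq 2^{n}d(x^{k},x^{l})$ shows that $(x^{k}(i_n))_{k}$ is $d_{i_n}$-Cauchy for each fixed $n$, hence converges to some point $y(i_n)\in X_{i_n}$ by completeness of $d_{i_n}$; this defines a point $y\in\prod_{i\in I}X_i$. To see that $x^{k}\to y$ in $d$, given $\varepsilon>0$ I would bound the tail of the defining series by $\varepsilon/2$ as above and then use coordinatewise convergence on the finitely many remaining coordinates to make the initial segment smaller than $\varepsilon/2$ for all large $k$. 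The computations are all elementary; the only points demanding genuine care are the uniform tail estimate that lets one reduce every statement to finitely many coordinates, and the limit-interchange implicit in the completeness argument, where the coordinatewise limits must be reassembled into a single $d$-limit.
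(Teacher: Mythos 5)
Your proof is correct and is the standard argument; the paper itself gives no proof of this proposition, simply citing it as a classical fact (van Mill, Lemma A.6.2), and your construction of the weighted-sum metric $d(x,y)=\sum_n 2^{-n}\min\{d_{i_n},1\}(x(i_n),y(i_n))$, together with the verification that it induces the product topology and is complete, is exactly the textbook proof behind that citation. The only step you assert without detail --- that truncating $d_i$ to $\min\{d_i,1\}$ preserves completeness --- is indeed routine, since the two metrics agree on all distances below $1$ and hence have the same Cauchy sequences.
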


Recall that a space $X$ is \emph{completely Baire} (briefly, $\CB$) if every closed subspace of $X$ is a Baire space. For a proof of the following result, see \cite[Corollary 21.21]{kechris} and \cite[Corollary 1.9.13]{vanmilli}.
\begin{theorem}[Hurewicz]\label{hurewicz}
Let $X$ be a space. Consider the following conditions.
\begin{enumerate}
\item\label{polishcb} $X$ is Polish.
\item\label{cb} $X$ is $\CB$.
\item\label{noQ} $X$ does not contain any closed copy of $\QQQ$.
\end{enumerate}
The implications $(\ref{polishcb})\rightarrow(\ref{cb})\leftrightarrow (\ref{noQ})$ hold for every $X$. If $X$ is a coanalytic subspace of some Polish space then the implication $(\ref{polishcb}) \leftarrow (\ref{cb})$ holds as well.
\end{theorem}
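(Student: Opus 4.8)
The plan is to dispose of the three implications that hold for every $X$ by elementary arguments, and to isolate the real difficulty in the coanalytic implication.

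For $(1)\Rightarrow(2)$ I would note that a closed subspace of a completely metrizable space is again completely metrizable (restrict a complete metric to the closed set), and that every completely metrizable space is Baire by the Baire Category Theorem; hence every closed subspace of a Polish space is Baire, which is exactly $\CB$. For $(2)\Rightarrow(3)$ I would argue by contraposition: since $\QQQ$ is \emph{crowded} and countable, it is the union of its singletons, each of which is nowhere dense, so $\QQQ$ is meager in itself and therefore not Baire; a closed copy of $\QQQ$ sitting inside $X$ would then be a closed subspace that fails to be Baire, contradicting $\CB$.

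The implication $(3)\Rightarrow(2)$ carries the combinatorial content. Arguing by contraposition, I would start from a closed subspace $F\subseteq X$ that is not Baire; since $F$ is closed in $X$, it suffices to produce a copy of $\QQQ$ that is closed in $F$. Non-Baireness of $F$ yields a nonempty open set $U\subseteq F$ and an increasing sequence $(C_n)$ of closed nowhere dense subsets of $F$ with $U\subseteq\bigcup_n C_n$. By Sierpi\'nski's theorem, every countable crowded (metrizable) space is homeomorphic to $\QQQ$, so it is enough to build a countable crowded set $D\subseteq U$ that is closed in $F$. I would construct $D=\{x_s : s\in\omega^{<\omega}\}$ by a Cantor scheme: shrinking open sets $V_s$ with $\overline{V_{s^\frown n}}\subseteq V_s$ pairwise disjoint and diameters tending to $0$, each $x_s\in V_s$ realized as the limit of $(x_{s^\frown n})_n$ (which forces $D$ to be crowded), and each new generation of points placed inside the nowhere dense ``boundaries'' $C_n$ so that no accumulation point of $D$ can fall outside $D$. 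The delicate point here is the bookkeeping that simultaneously guarantees that $D$ is crowded and that $D$ is closed in $F$.

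Finally, for the implication $(2)\Rightarrow(1)$ under the hypothesis that $X$ is coanalytic, I would use $(2)\Leftrightarrow(3)$ to reduce it to the assertion that a coanalytic space containing no closed copy of $\QQQ$ is Polish; this is Hurewicz's theorem and constitutes the genuinely descriptive-set-theoretic heart of the statement. Realizing $X$ as a coanalytic (hence $\Pi^1_1$) subset of a Polish space $Z$, with analytic complement, I would run a transfinite derivative on $Z$ that discards the points near which $X$ is locally $\Gd$, invoking the boundedness theorem for $\Pi^1_1$-ranks to ensure that this derivative stabilizes at a countable ordinal. If the stable remainder is empty then $X$ is $\Gd$ in $Z$, hence Polish; if it is nonempty, the same analysis extracts from it a closed copy of $\QQQ$ inside $X$, contradicting $(3)$. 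I expect this last dichotomy, together with the rank machinery that powers it, to be the main obstacle; it is also where coanalyticity is indispensable, since $(2)\Rightarrow(1)$ is false for general $X$.
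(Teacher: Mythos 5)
The paper offers no proof of this theorem: it points to \cite[Corollary 21.21]{kechris} and \cite[Corollary 1.9.13]{vanmilli}, so your sketch can only be measured against the standard arguments behind those citations. Your handling of $(1)\rightarrow(2)$ and $(2)\rightarrow(3)$ is complete and correct, and your reduction of $(3)\rightarrow(2)$ to the lemma that every non-Baire space contains a closed copy of $\QQQ$ is the right move, with the Cantor-scheme outline being the standard one. The details you defer are real but routine; two are worth naming. First, the meager open set $U$ is automatically crowded (an isolated point of $U$ would be a nonempty open subset of some nowhere dense $C_n$). Second, you should first shrink to a nonempty open $V$ with $\cl_F(V)\subseteq U$, so that every accumulation point of $D$ lies in $\bigcup_n C_n$; the closedness of $D$ is then obtained by making the level-$k$ open sets of the scheme miss $C_0\cup\dots\cup C_{k-1}$, rather than by where the points themselves are placed, as your wording suggests.

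The one place where your proposal stops short of a proof is the coanalytic implication $(2)\rightarrow(1)$, as you acknowledge. What you describe is a plan for reproving Hurewicz's dichotomy (a coanalytic subspace of a Polish space is either $\Gd$ or meets some Cantor set in a countable dense subset, the latter giving a closed copy of $\QQQ$ inside $X$), and the plan contains a misattribution: a decreasing transfinite sequence of closed subsets of a second countable space stabilizes at a countable ordinal for free, so $\Pi^1_1$-boundedness is not what guarantees stabilization. The genuine descriptive-set-theoretic input (boundedness in Saint-Raymond's argument, or unfolded closed games in the Kechris--Louveau--Woodin proof) is needed to prove the dichotomy itself, namely that an empty remainder forces $X$ to be $\Gd$ while a nonempty remainder yields the Cantor set. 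Since this is precisely the content of the cited \cite[Corollary 21.21]{kechris}, your proposal is best read as a correct decomposition into two elementary implications, one standard construction, and one deep result taken from the literature --- which is exactly how the paper itself treats the theorem.
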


Recall that a \emph{$\lambda$-set} is a space in which every countable set is $\Gd$. Observe that no $\lambda$-set can contain a copy of $2^\omega$. Recall that a \emph{$\lambda'$-set} is a subspace $X$ of $2^\omega$ such that $X\cup D$ is a $\lambda$-set for every countable $D\subseteq 2^\omega$. For a proof of Lemma \ref{unionlambda}, see \cite[Theorem 7.2]{millers}. For a proof of Theorem \ref{zfclambda}, which is based on the existence of a Hausdorff gap, see \cite[Theorem 5.5]{millers} and the argument that follows it.
\begin{lemma}[Sierpi\'{n}ski]\label{unionlambda}
A countable union of $\lambda'$-sets is a $\lambda'$-set.
\end{lemma}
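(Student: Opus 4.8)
The plan is to unwind the definitions and reduce everything to a single gluing argument. Writing $X=\bigcup_{n\in\omega}X_n$ with each $X_n$ a $\lambda'$-set, I must show that $X\cup D$ is a $\lambda$-set for an arbitrary countable $D\subseteq 2^\omega$; that is, given any countable $C\subseteq X\cup D$, I must exhibit $C$ as a $\Gd$ subset of $X\cup D$.

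The first key move is to absorb $C$ into the countable parameter. I would set $D'=C\cup D$, which is again a countable subset of $2^\omega$, and observe that $X\cup D=X\cup D'=\bigcup_{n\in\omega}(X_n\cup D')$. Since each $X_n$ is a $\lambda'$-set and $D'$ is countable, every $X_n\cup D'$ is a $\lambda$-set. The payoff of this move is that $C\subseteq D'\subseteq X_n\cup D'$ holds for \emph{every} $n$ simultaneously, so the countability of $C$ gives that $C$ is $\Gd$ in $X_n\cup D'$ for each $n$.

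The heart of the argument is to glue these local witnesses into a global one. For each $n$, using that $X_n\cup D'$ is a subspace of $2^\omega$, I can fix a $\Gd$ set $G_n\subseteq 2^\omega$ with $C=G_n\cap(X_n\cup D')$. I then set $G=\bigcap_{n\in\omega}G_n$, which is still $\Gd$ in $2^\omega$. Because $C\subseteq G_n$ for every $n$ we have $C\subseteq G$, and because $C$ lies in every piece $X_n\cup D'$ we have $C\subseteq X\cup D$; conversely any point of $G\cap(X\cup D)$ lies in some $X_n\cup D'$ and in $G\subseteq G_n$, hence in $G_n\cap(X_n\cup D')=C$. Thus $G\cap(X\cup D)=C$, which exhibits $C$ as a $\Gd$ subset of $X\cup D$ and finishes the proof.

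I expect the only real obstacle to be this gluing step, and specifically the temptation to argue dually via $\Fs$ complements: if one only knew $(X_n\cup D')\setminus C$ to be $\Fs$ in each piece, the union need not be $\Fs$ in $X\cup D$, since closedness within a piece does not ascend to the whole union. The clean way around this is exactly the absorption trick above, which forces $C$ to sit inside every piece at once; this is what makes the single intersection $G=\bigcap_{n\in\omega}G_n$ recover $C$ on the nose and bypasses the failure of the naive complement argument.
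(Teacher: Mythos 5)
Your proof is correct: the absorption of $C$ into the countable parameter $D'=C\cup D$ is exactly the right move, and the gluing via $G=\bigcap_{n\in\omega}G_n$ is valid since $C$ then sits inside every piece $X_n\cup D'$. The paper does not prove this lemma itself but defers to Miller's handbook chapter, and your argument is essentially the classical one given there, so there is nothing to add.
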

\begin{theorem}[Sierpi\'{n}ski]\label{zfclambda}
There exists a $\lambda'$-set of size $\omega_1$.
\end{theorem}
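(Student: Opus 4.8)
The plan is to route everything through the classical reformulation of the $\lambda$-set property as a covering (equivalently, boundedness) condition, and then to feed a Hausdorff gap into this reformulation so that the gap's non-fillability delivers the $\lambda'$-property uniformly against all countable perturbations. First I would recast the defining condition: a subspace $Y$ of $2^\omega$ is a $\lambda$-set exactly when, for every countable $E\subseteq Y$, the set $Y\setminus E$ is relatively $\Fs$ in $Y$, which means $Y\setminus E$ can be covered by countably many compact subsets of $2^\omega$, each disjoint from $E$. When $E$ is dense, $2^\omega\setminus E$ is a crowded, nowhere locally compact, zero-dimensional Polish space, hence homeomorphic to $\omega^\omega$; under this identification, ``being covered by countably many compact sets disjoint from $E$'' becomes ``being $\leq^{*}$-bounded by a single element of $\omega^\omega$''. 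Thus the whole problem translates into a domination statement: I want $X\subseteq 2^\omega$ with $|X|=\omega_1$ whose image remains $\leq^{*}$-bounded under every coordinatization $2^\omega\setminus E\approx\omega^\omega$ coming from a countable set, even after deleting countably many points.

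Next I would take a Hausdorff gap $\langle x_\alpha:\alpha<\omega_1\rangle$, $\langle y_\alpha:\alpha<\omega_1\rangle$ in $(\omega^\omega,\leq^{*})$, with the $x_\alpha$ strictly $\leq^{*}$-increasing, the $y_\alpha$ strictly $\leq^{*}$-decreasing, $x_\alpha\leq^{*}y_\beta$ for all $\alpha,\beta$, and admitting no interpolant $z$ with $x_\alpha\leq^{*}z\leq^{*}y_\alpha$ for every $\alpha$; its existence is Hausdorff's $\ZFC$ theorem, and the strict increase guarantees $\omega_1$ distinct functions, hence $\omega_1$ distinct points. I would let $X$ consist of the points corresponding to the $x_\alpha$, transported into $2^\omega$ along a fixed embedding $\omega^\omega\hookrightarrow 2^\omega$. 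The increasing side alone makes $X$ itself $\leq^{*}$-bounded (by any $y_\beta$), which is exactly what is needed to separate one countable set; the decreasing side and the non-interpolation property are what will be needed to survive all perturbations.

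The heart of the matter is the verification of the full $\lambda'$-property, which I would carry out by contradiction. Suppose that for some countable $D\subseteq 2^\omega$ some countable subset $E$ of $X\cup D$ fails to be $\Gd$. Passing to a suitable coordinatization $h_E\colon 2^\omega\setminus E\to\omega^\omega$ and applying the dictionary of the first paragraph, this failure means that $h_E[(X\cup D)\setminus E]$ is $\leq^{*}$-unbounded; since the countably many points contributed by $D$ cannot affect boundedness, the unboundedness must already be carried by $\{h_E(x_\alpha):\alpha\in S\}$ for an uncountable $S$. The point is then to read this off as an object lying cofinally above the $x_\alpha$ yet beneath the $y_\alpha$, i.e.\ a single function $z$ filling the gap. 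Since the gap is unfillable, no such $D$ and $E$ can exist, so $X\cup D$ is a $\lambda$-set for every countable $D$, and $X$ is the desired $\lambda'$-set of size $\omega_1$.

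The hard part will be precisely this last transfer. The quantifier ``for every countable $D\subseteq 2^\omega$'' ranges over $\cccc$-many sets, so there is no hope of diagonalizing against them one at a time in $\omega_1$ steps; under $\CH$ one could attempt such an enumeration, but the whole reason to use a gap is that its rigidity defeats all countable $D$ at once, in $\ZFC$. The delicacy is that different choices of $E$ induce genuinely different homeomorphisms $2^\omega\setminus E\approx\omega^\omega$, so a covering failure detected in one of these coordinate systems must be converted into a bona fide interpolant in the single fixed order $(\omega^\omega,\leq^{*})$ where the gap lives. Setting up the embedding of $X$ (and, if necessary, arranging that the construction also reflects the $y_\alpha$ side) so that every such coordinate change transports unboundedness faithfully back to gap-filling, while confirming that the extra countably many points of $D$ never spoil boundedness, is where the real work lies.
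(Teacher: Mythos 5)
Your opening two paragraphs are correct and do follow the standard route --- the same one the paper points to by citing Miller's survey: the reformulation of ``$E$ is relatively $\Gd$'' as ``$Y\setminus E$ is covered by countably many compacta missing $E$,'' its translation into $\leq^*$-boundedness when $E$ is dense, and the $\ZFC$ existence of a Hausdorff gap are all standard. The difficulty is that the entire content of the theorem sits in the step you explicitly defer as ``where the real work lies,'' and that step is not only absent but, as described, aimed in a direction that does not obviously lead anywhere. You propose to derive, from the unboundedness of $h_E[(X\cup D)\setminus E]$ in the coordinates of $2^\omega\setminus E$, a single function $z$ with $x_\alpha\leq^* z\leq^* y_\alpha$ for all $\alpha$. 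But unboundedness is a negative statement about a different copy of $\omega^\omega$, whose order is not coherently related to the original $\leq^*$, and it hands you no candidate $z$; no construction is given, and I do not see how to extract one along these lines. The classical argument runs the other way: one first proves that for every single real $d$ the set of levels $\alpha$ at which $d$ interpolates the gap is countable (an uncountable, hence cofinal, set of such levels would make $d$ fill the gap), and then, for a given countable $D$, one uses this to assemble \emph{directly} an $\Fs$ set that contains all but countably many members of the gap and misses $D$. No contradiction with unboundedness, and no change of coordinates, is needed.

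Two further concrete problems. First, you take $X$ to be only the left side $\{x_\alpha\}$; the non-interpolation property is a joint property of the two sequences, and the classical $\lambda'$-set is built from both sides of the gap (or exploits the extra combinatorial condition built into Hausdorff's construction). You flag this yourself (``if necessary, arranging that the construction also reflects the $y_\alpha$ side''), but that is exactly the kind of choice a proof cannot leave open, since the whole mechanism that defeats an arbitrary countable $D$ depends on it. Second, the coordinatization $2^\omega\setminus E\approx\omega^\omega$ requires $E$ to be dense in $2^\omega$, while an arbitrary countable $E\subseteq X\cup D$ need not be; this is repairable (one reduces to the case $E=D$ with $D$ dense), but it is an unaddressed step. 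Note that the paper itself does not prove this theorem: it cites Miller's survey, where the argument following Theorem 5.5 is precisely the direct construction indicated above, so what you have written is a plan whose decisive lemma still has to be supplied.
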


Recall that a subspace $B$ of an uncountable Polish space $Z$ is a \emph{Bernstein set} if $B\cap K\neq\varnothing$ and $(Z\setminus
B)\cap K\neq\varnothing$ for every copy $K$ of $2^\omega$ in $Z$. It is easy to see that Bernstein sets exist in $\ZFC$, and that they never have the property of Baire (see \cite[Example 8.24]{kechris}). Using Theorem \ref{hurewicz}, one can show that every Bernstein set is $\CB$.

\section{The property of Baire in the restricted sense}

All the results in this section are classical, and they will be needed in the next section. The exposition is based on \cite[Appendix D]{medinit}. Given a space $Z$, we will denote by $\BB(Z)$ be the collection of all subsets of $Z$ that have the property of Baire. For proofs of the following two well-known results, see \cite[Proposition 8.22]{kechris} and \cite[Proposition 8.23]{kechris} respectively.

\begin{proposition}\label{sigmapb}
Let $Z$ be a space. Then $\BB(Z)$ is the smallest $\sigma$-algebra of subsets of $Z$ containing all open sets and all meager sets.
\end{proposition}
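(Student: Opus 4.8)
The plan is to unwind the definition of the property of Baire. Recall that a set $A\subseteq Z$ belongs to $\BB(Z)$ precisely when there is an open set $U\subseteq Z$ with $A\triangle U$ meager, where $\triangle$ denotes symmetric difference. Throughout I would make repeated use of two elementary facts: the ``triangle inequality'' $A\triangle C\subseteq(A\triangle B)\cup(B\triangle C)$, and the observation that the boundary of an open set (equivalently, of a closed set) is nowhere dense, hence meager. With these in hand the argument breaks into three parts: showing that $\BB(Z)$ is a $\sigma$-algebra, that it contains every open set and every meager set, and that it is contained in any $\sigma$-algebra sharing these two properties.

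First I would check that $\BB(Z)$ is closed under countable unions. If $A_n\triangle U_n$ is meager with each $U_n$ open, then from $\bigl(\bigcup_n A_n\bigr)\triangle\bigl(\bigcup_n U_n\bigr)\subseteq\bigcup_n(A_n\triangle U_n)$ we see that $\bigcup_n A_n$ differs from the open set $\bigcup_n U_n$ by a meager set, as required. The step I expect to be the only real (if mild) obstacle is closure under complementation, since the complement of an open set is merely closed. Given $A\triangle U$ meager with $U$ open, I would note that $(Z\setminus A)\triangle(Z\setminus U)=A\triangle U$ is meager; writing $C=Z\setminus U$, the boundary $C\triangle\inte(C)=C\setminus\inte(C)$ is nowhere dense, so the triangle inequality yields that $(Z\setminus A)\triangle\inte(C)$ is meager. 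Since $\inte(C)$ is open, this shows $Z\setminus A\in\BB(Z)$. As $Z$ itself is open (and $Z\triangle Z=\varnothing$ is meager), it follows that $\BB(Z)$ is a $\sigma$-algebra.

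That $\BB(Z)$ contains all open sets is immediate, since $U\triangle U=\varnothing$; and it contains every meager set $M$ because $M\triangle\varnothing=M$ is meager with $\varnothing$ open. Finally, for minimality, let $\SSSS$ be any $\sigma$-algebra of subsets of $Z$ containing all open sets and all meager sets. If $A\in\BB(Z)$, pick an open $U$ with $M:=A\triangle U$ meager; then, using associativity of $\triangle$ and $U\triangle U=\varnothing$, we have $A=U\triangle M=(U\setminus M)\cup(M\setminus U)$, which is built from $U\in\SSSS$ and $M\in\SSSS$ using complements and finite unions. Hence $A\in\SSSS$, so $\BB(Z)\subseteq\SSSS$, completing the proof.
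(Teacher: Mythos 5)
Your proof is correct and complete; the paper itself gives no argument for this classical fact, simply citing \cite[Proposition 8.22]{kechris}, and your proof is essentially the standard one found there (symmetric-difference bookkeeping, with the only nontrivial point being closure under complements via the nowhere dense boundary of an open set). Nothing to add.
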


\begin{proposition}\label{equivalentpb}
Let $Z$ be a space. Then the following conditions are equivalent for every subset $X$ of $Z$.
\begin{itemize}
\item $X\in\BB(Z)$.
\item $X=G\cup M$, where $G$ is a $\Gd$ subset of $Z$ and $M$ is a meager subset of $Z$.
\end{itemize}
\end{proposition}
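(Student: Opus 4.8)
The plan is to prove the two implications separately, using the standard definition of the property of Baire in terms of symmetric differences: a subset $Y$ of $Z$ belongs to $\BB(Z)$ precisely when $Y\triangle U$ is meager for some open $U\subseteq Z$.

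For the implication from the second condition to the first, I would simply invoke Proposition \ref{sigmapb}. Since $G$ is $\Gd$, it is Borel, hence it belongs to $\BB(Z)$ because $\BB(Z)$ is a $\sigma$-algebra containing all open sets (and therefore all Borel sets). The meager set $M$ belongs to $\BB(Z)$ directly by Proposition \ref{sigmapb}. As $\BB(Z)$ is closed under finite unions, it follows that $X=G\cup M\in\BB(Z)$.

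The substantive direction is the converse. Assuming $X\in\BB(Z)$, I would fix an open set $U$ such that $X\triangle U$ is meager; in particular both $Q=U\setminus X$ and $X\setminus U$ are meager. The key step is to replace the meager set $Q$ by a meager $\Fs$ set: every meager set is contained in a meager $\Fs$ set, obtained as the union of the closures of a witnessing sequence of nowhere dense sets. So I would choose $F\supseteq Q$ with $F$ meager and $\Fs$, and then set $G=U\setminus F=U\cap(Z\setminus F)$. Since $U$ is open and $Z\setminus F$ is $\Gd$, the set $G$ is $\Gd$. Moreover $G\subseteq U\setminus Q=U\cap X\subseteq X$, and a direct computation gives $X\setminus G=(X\setminus U)\cup(X\cap F)$, which is meager as the union of the meager set $X\setminus U$ and the subset $X\cap F$ of the meager set $F$. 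Setting $M=X\setminus G$ then yields $X=G\cup M$ with $G$ being $\Gd$ and $M$ meager, as required.

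The main obstacle is exactly the passage from $Q$ to the $\Fs$ superset $F$: without it, one knows only that $U\setminus Q=U\cap X$ is an open set minus a meager set, which need not be $\Gd$. The $\Fs$ enlargement is precisely what forces $G=U\setminus F$ into the class of $\Gd$ sets while keeping it inside $X$, and everything else reduces to routine set-theoretic bookkeeping.
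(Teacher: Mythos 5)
Your proof is correct, and it is essentially the standard argument: the paper does not prove this proposition itself but cites \cite[Proposition 8.23]{kechris}, whose proof is exactly your route (enlarge the meager set $U\setminus X$ to a meager $\Fs$ set $F$ and take $G=U\setminus F$). The key step of passing to the $\Fs$ superset, which you correctly identify as the crux, is handled properly.
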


Recall that a subset $X$ of a space $Z$ has the \emph{property of Baire in the restricted sense} if $X\cap S\in\BB(S)$ for every subspace $S$ of $Z$ (see \cite[Subsection VI of Section 11]{kuratowski}). We will denote by $\BB_r(Z)$ the collection of subsets of $Z$ that have the property of Baire in the restricted sense. Using Proposition \ref{sigmapb}, it is easy to check that $\BB_r(Z)$ is a $\sigma$-algebra.

The inclusion $\BB_r(Z)\subseteq\BB(Z)$ is obvious. To see that the reverse inclusion need not hold, let $Z=2^\omega\times 2^\omega$ and fix $z\in 2^\omega$. Let $X$ be a Bernstein set in $S=\{z\}\times 2^\omega$. In particular, $X\cap S=X\notin\BB(S)$, so $X\notin\BB_r(Z)$. However, since $X$ is nowhere dense in $Z$, it is clear that $X\in\BB(Z)$. Notice that the same example $X$ shows that, in the following proposition, the hypothesis ``$X\in\BB_r(Z)$'' cannot be weakened to ``$X\in\BB(Z)$''.
\begin{proposition}\label{dichotomy}
Let $Z$ be a Polish space, and assume that $X\in\BB_r(Z)$. Then either $X$ has a dense Polish subspace or $X$ is not Baire.
\end{proposition}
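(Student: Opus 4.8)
The plan is to reduce to the closure of $X$ and then split into two cases according to whether a certain dense $\Gd$ set exists. First I would set $S=\cl_Z(X)$. Since $S$ is a closed subspace of the Polish space $Z$, it is itself Polish, and by construction $X$ is dense in $S$. Because $X\in\BB_r(Z)$, the defining property gives $X=X\cap S\in\BB(S)$, so Proposition \ref{equivalentpb}, applied inside the Polish space $S$, yields a decomposition $X=G\cup M$, where $G$ is a $\Gd$ subset of $S$ and $M$ is meager in $S$. The whole argument then turns on whether $G$ is dense in $S$, keeping in mind that $G$, being $\Gd$ in a Polish space, is Polish, and that $G\subseteq X\subseteq S$.

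If $G$ is dense in $S$, then I claim $G$ is the desired dense Polish subspace of $X$. Indeed, $\cl_X(G)=X\cap\cl_S(G)=X\cap S=X$, so $G$ is dense in $X$, and since $G$ is Polish this settles this case. If instead $G$ is not dense in $S$, I would fix a nonempty open set $U\subseteq S$ disjoint from $G$, so that $X\cap U\subseteq M$. As $X$ is dense in $S$, the set $X\cap U$ is a nonempty open subset of $X$, and the goal becomes to show that $X\cap U$ is meager in $X$; a nonempty open meager subset immediately witnesses that $X$ is not Baire, finishing the dichotomy.

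The crux, and the step I expect to be the main obstacle, is transferring meagerness from $S$ to the dense subspace $X$: I would prove the elementary fact that if $N$ is nowhere dense in $S$ then $X\cap N$ is nowhere dense in $X$. For this, note $\cl_X(X\cap N)\subseteq X\cap\cl_S(N)$, so that any nonempty relatively open set $V=X\cap W$ (with $W$ open in $S$) contained in $\cl_X(X\cap N)$ would satisfy $X\cap W\subseteq\cl_S(N)$; since $X\cap W$ is dense in $W$, this forces $W\subseteq\cl_S(N)$, contradicting that the closed set $\cl_S(N)$ has empty interior. Writing $M=\bigcup_n N_n$ with each $N_n$ nowhere dense in $S$, this gives that $X\cap M=\bigcup_n(X\cap N_n)$ is meager in $X$, and since $X\cap U\subseteq X\cap M$, the nonempty open set $X\cap U$ is meager in $X$, as required. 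Everything apart from this transfer lemma is routine bookkeeping with closures and the density of $X$ in $S$.
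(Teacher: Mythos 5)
Your proposal is correct and follows essentially the same route as the paper: decompose $X=G\cup M$ inside the Polish space $\cl(X)$ via Proposition \ref{equivalentpb}, observe that $G$ is Polish, and split on whether $G$ is dense. The only difference is that you spell out the transfer of meagerness from $\cl(X)$ to its dense subspace $X$, which the paper asserts in one line; your argument for that step is the standard one and is correct.
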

\begin{proof}
Since $X\in\BB(\cl(X))$, by Proposition \ref{equivalentpb}, there exist a $\Gd$ subset $G$ of $\cl(X)$ and a meager subset $M$ of $\cl(X)$ such that $X=M\cup G$. Notice that $G$ is Polish because $\cl(X)$ is Polish. Furthermore, since $X$ is dense in $\cl(X)$, the set $M$ is meager in $X$ as well. Therefore, if $G$ is dense in $X$, then the first alternative will hold. Otherwise, the second alternative will hold.
\end{proof}

Finally, we will point out a significant class of sets that have the property of Baire in the restricted sense. Given a Polish space $Z$, we will denote by $\Aa_\sigma(Z)$ the $\sigma$-algebra of subsets of $Z$ generated by the analytic sets.

\begin{proposition}\label{sigmaanalytic}
Let $Z$ be a Polish space. Then $\Aa_\sigma(Z)\subseteq\BB_r(Z)$.
\end{proposition}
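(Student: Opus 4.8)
The plan is to exploit two classical facts, together with the already-established fact that $\BB_r(Z)$ is a $\sigma$-algebra. Since $\Aa_\sigma(Z)$ is \emph{by definition} the smallest $\sigma$-algebra of subsets of $Z$ containing the analytic sets, and $\BB_r(Z)$ is a $\sigma$-algebra, it suffices to show that every analytic subset $A$ of $Z$ belongs to $\BB_r(Z)$; that is, that $A\cap S\in\BB(S)$ for every subspace $S$ of $Z$.

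The idea is to push the intersection with $S$ all the way down to a defining scheme of closed sets, where traces behave well. Since $Z$ is Polish and $A$ is analytic, I would first represent $A$ as a Souslin set: there is a scheme $(F_s)_{s\in\omega^{<\omega}}$ of closed subsets of $Z$ with $A=\bigcup_{x\in\omega^\omega}\bigcap_{n\in\omega}F_{x\upharpoonright n}$ (see \cite[Theorem 25.7]{kechris}). Fixing an arbitrary subspace $S\subseteq Z$ and intersecting this identity with $S$ gives $A\cap S=\bigcup_{x\in\omega^\omega}\bigcap_{n\in\omega}(F_{x\upharpoonright n}\cap S)$, so that $A\cap S$ is precisely the result of the Souslin operation applied to the scheme $(F_s\cap S)_{s\in\omega^{<\omega}}$.

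Each set $F_s\cap S$ is closed in $S$, hence lies in $\BB(S)$ (recall that $\BB(S)$ is a $\sigma$-algebra containing the open sets, by Proposition \ref{sigmapb}). The key input is now the classical theorem that, in an \emph{arbitrary} topological space, the $\sigma$-algebra of sets with the property of Baire is closed under the Souslin operation (see \cite[Theorem 29.13]{kechris}). Applying this inside $S$ yields $A\cap S\in\BB(S)$. As $S$ was arbitrary, $A\in\BB_r(Z)$, and the proposition follows.

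I expect the only genuine subtlety to be conceptual rather than computational. The property of Baire is not hereditary: as the Bernstein-set example preceding the proposition shows, a set can lie in $\BB(Z)$ (being meager there) while its trace on a subspace $S$ fails to lie in $\BB(S)$. Consequently one cannot simply restrict a witness for $A\in\BB(Z)$ to $S$. What makes the argument work is that \emph{analyticity} does descend to the defining closed sets, so the restriction can be carried out at the level of the scheme; the essential step is then to invoke the Souslin-operation stability of $\BB(S)$ rather than the mere membership $A\in\BB(Z)$. All the substantive work is thus delegated to the two cited classical theorems.
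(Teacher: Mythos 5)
Your proof is correct and follows essentially the same route as the paper: reduce to analytic sets via the $\sigma$-algebra structure of $\BB_r(Z)$, represent an analytic set by a Souslin scheme of closed sets, and invoke the closure of the Baire-property $\sigma$-algebra under operation $\Aa$. The only difference is that you explicitly unpack the step the paper calls a ``straightforward corollary'' (namely, that intersecting the scheme with $S$ yields a Souslin scheme of sets in $\BB(S)$), which is a welcome clarification but not a different argument.
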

\begin{proof}
Since, as we have already observed, $\BB_r(Z)$ is a $\sigma$-algebra, it will be enough to show that every analytic subset of $Z$ has the property of Baire in the restricted sense. Trivially, every closed subset of $Z$ has the property of Baire in the restricted sense. Therefore, since every analytic set is obtained by applying Souslin operation $\Aa$ to a family of closed sets (see \cite[Theorem 25.7]{kechris}), it will be enough to show that the property of Baire in the restricted sense is preserved by operation $\Aa$. This is a straightforward corollary of the classical fact that the property of Baire is preserved by operation $\Aa$ (see \cite[Corollary 29.14]{kechris}).
\end{proof}

\section{Strengthening a result of Hru\v{s}\'ak and Zamora Avil\'es}

The main result of this section is Theorem \ref{coanalyticcdhpower}, which gives the promised strengthening of Theorem \ref{borelcdhpower} and answers the second part of Question \ref{optimizedefinability}. We will need a few preliminaries. Proposition \ref{notcdh} first appeared as \cite[Proposition 13]{kunenmedinizdomskyy}. Proposition \ref{meagerdenseGdelta} first appeared as \cite[Lemma 3.2]{fitzpatrickzhoub}. Corollary \ref{cdhlambda} first appeared as the first part of \cite[Theorem 3.4]{fitzpatrickzhoub}. Proposition \ref{baire} first appeared as \cite[Theorem 3.1]{hrusakzamoraaviles}.

\begin{proposition}[Kunen, Medini, Zdomskyy]\label{notcdh}
Let $X$ be a space that is not $\CB$ but has a dense $\CB$ subspace. Then $X$ is not $\CDH$.
\end{proposition}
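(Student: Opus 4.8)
The plan is to argue by contradiction: assume that $X$ is $\CDH$, fix a dense $\CB$ subspace $D$, and derive that $X$ must in fact be $\CB$. The single tool that makes the whole argument go through is the equivalence $(\ref{cb})\leftrightarrow(\ref{noQ})$ in Theorem \ref{hurewicz} (Hurewicz), which holds for \emph{every} space: being $\CB$ is the same as containing no closed copy of $\QQQ$. This reduces the $\CB$ property to the presence or absence of a single, combinatorially transportable object, namely a closed copy of $\QQQ$, and the idea is to use the homogeneity supplied by $\CDH$ to \emph{move} such a copy into $D$, where it cannot exist.

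Concretely, since $X$ is assumed not to be $\CB$, Theorem \ref{hurewicz} gives a closed subspace $C$ of $X$ with $C\approx\QQQ$; in particular $C$ is countable. I would then choose the two countable dense sets asymmetrically. For $A$, take any countable dense subset of $X$ with $C\subseteq A$; this is possible because $C$ is countable and $X$ is separable, so one may simply adjoin $C$ to a countable dense set. For $B$, take a countable dense subset of the subspace $D$; such a set exists because $D$ is separable, and it is dense in $X$ because $D$ is dense in $X$. Applying $\CDH$ to the pair $(A,B)$ yields a homeomorphism $h\colon X\longrightarrow X$ with $h[A]=B$. Since $h$ is a homeomorphism of $X$ and $C$ is closed in $X$, the image $h[C]$ is closed in $X$ and homeomorphic to $\QQQ$; and since $C\subseteq A$ we have $h[C]\subseteq h[A]=B\subseteq D$. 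A subset of $D$ that is closed in $X$ is closed in $D$, so $h[C]$ is a closed copy of $\QQQ$ inside $D$, contradicting that $D$ is $\CB$ by Theorem \ref{hurewicz}.

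The part I expect to be the real obstacle is not any individual step above but arriving at the correct construction. The naive invariant to track is whether the complement $X\setminus A$ is $\CB$, since the homeomorphism $h$ identifies $X\setminus A$ with $X\setminus B$; one would then try to produce one dense set with $\CB$ complement and one without. This runs aground because the complement of a dense $\CB$ subspace may be uncountable, so there is no evident way to force a countable dense set to have $\CB$ complement. The key move that avoids this difficulty is to hide the pathological closed copy of $\QQQ$ \emph{inside} the dense set $A$ rather than in its complement: then the $\CDH$-homeomorphism, chosen to carry $A$ into $D$, is forced to deliver the closed copy of $\QQQ$ into $D$ itself. Once this is seen, the remaining points are routine, namely the separability of $D$ and the automatic preservation of closedness under the global homeomorphism $h$.
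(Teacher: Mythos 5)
Your proof is correct and is essentially the paper's own argument with the roles of the two countable dense sets interchanged: the paper puts the countable dense set inside $D$ on one side and extends the closed copy of $\QQQ$ to a countable dense set on the other, then observes no homeomorphism can match them, which is exactly your contradiction run through $h^{-1}$. The details you supply (closedness is preserved by $h$, a subset of $D$ closed in $X$ is closed in $D$) are precisely what the paper's ``clearly'' elides.
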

\begin{proof}
Let $D$ be a dense $\CB$ subspace of $X$, and let $A$ be a countable dense subset of $D$. By Theorem \ref{hurewicz}, there exists a closed subspace $Q$ of $X$ that is homeomorphic to $\QQQ$. Extend $Q$ to a countable dense subset $B$ of $X$. Clearly there is no homeomorphism $h:X\longrightarrow X$ such that $h[A]=B$.
\end{proof}

\begin{proposition}[Fitzpatrick, Zhou]\label{meagerdenseGdelta}
Every meager space has a countable dense $\Gd$ subset.
\end{proposition}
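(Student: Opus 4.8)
The plan is to produce the desired set as the complement of a suitable $\Fs$ set, obtained from a single clever selection. Since $X$ is meager, I would first fix a decomposition $X=\bigcup_{n\in\omega}F_n$ in which each $F_n$ is closed and nowhere dense in $X$; after replacing $F_n$ with $\bigcup_{k\le n}\overline{F_k}$ I may also assume that the $F_n$ are increasing. In addition I would fix a countable base $\{B_k:k\in\omega\}$ consisting of nonempty open subsets of $X$.

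Next I would build the candidate set $D$ by a single pass through the base. For each $k$, since $F_k$ is nowhere dense, the open set $B_k\setminus F_k$ is nonempty, so I can choose a point $d_k\in B_k\setminus F_k$; put $D=\{d_k:k\in\omega\}$. As $D$ meets every element of the base, $D$ is dense in $X$, and it is obviously countable. The crucial feature of this choice is that $d_k\notin F_k$, which together with the monotonicity of the $F_n$ forces $d_k\in F_n$ only when $n>k$; consequently $D\cap F_n\subseteq\{d_0,\dots,d_{n-1}\}$ is \emph{finite} for every $n$.

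It then remains to verify that $D$ is $\Gd$, equivalently that $X\setminus D$ is $\Fs$. Writing $X\setminus D=\bigcup_{n\in\omega}(F_n\setminus D)$, it suffices to see that each $F_n\setminus D$ is $\Fs$. But $F_n\setminus D=F_n\setminus(D\cap F_n)$, and $D\cap F_n$ is finite, hence closed in $X$; thus $F_n\setminus D$ is the intersection of the closed set $F_n$ with the open set $X\setminus(D\cap F_n)$. Since open subsets of a metrizable space are $\Fs$ and the class of $\Fs$ sets is closed under finite intersections, each $F_n\setminus D$ is $\Fs$, and a countable union of $\Fs$ sets is $\Fs$. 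Therefore $X\setminus D$ is $\Fs$, so $D$ is $\Gd$, and $D$ is a countable dense $\Gd$ subset of $X$.

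The one delicate point—and the place where a naive argument fails—is exactly the finiteness of $D\cap F_n$. For an arbitrary countable dense $D$ the sets $F_n\setminus D$ need not be $\Fs$: deleting a dense countable set from a perfect closed set $F_n$ typically produces a set whose closure is all of $F_n$, so the pieces $F_n\setminus D$ do not reassemble into an $\Fs$ cover of $X\setminus D$. The selection $d_k\notin F_k$ is designed precisely to defeat this obstruction, pushing the points of $D$ into ever higher ``levels'' so that only finitely many of them land in any given $F_n$; once $D\cap F_n$ is a finite (hence closed) set, removing it from the closed set $F_n$ is harmless.
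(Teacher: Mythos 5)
Your proof is correct and follows essentially the same route as the paper: choose one point of each basic open set avoiding an initial segment of the nowhere dense pieces, so that each piece meets the resulting dense set in only finitely many points, and then express the complement as a countable union of (closed minus finite) sets. The only cosmetic difference is that you first make the decomposition increasing, whereas the paper directly picks $d_n\in U_n\setminus\bigcup_{\ell<n}K_\ell$; the finiteness of $D\cap F_n$, which you rightly identify as the crux, is exactly the point of the paper's argument as well.
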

\begin{proof}
Let $\{U_n:n\in\omega\}$ be a countable base for $X$. Assume that $X=\bigcup_{\ell\in\omega}K_\ell$, where each $K_\ell$ is a closed nowhere dense subset of $X$. Let $D=\{d_n:n\in\omega\}$, where each $d_n\in U_n\setminus\bigcup_{\ell<n}K_\ell$. It is clear that $D$ is a countable dense subset of $X$. To see that $D$ is $\Gd$, notice that
$$
X\setminus D=\bigcup_{\ell\in\omega}(K_\ell\setminus\{d_n:n\leq\ell\})
$$
is $\Fs$ because each $K_\ell\setminus\{d_n:n\leq\ell\}$ is $\Fs$.
\end{proof}
\begin{corollary}[Fitzpatrick, Zhou]\label{cdhlambda}
Let $X$ be a meager $\CDH$ space. Then $X$ is a $\lambda$-set.
\end{corollary}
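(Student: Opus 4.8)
The plan is to use Proposition \ref{meagerdenseGdelta} to produce a single countable dense $\Gd$ subset, transport it via homeomorphisms to show that \emph{every} countable dense subset is $\Gd$, and finally bridge from dense countable subsets to arbitrary countable subsets.

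First, since $X$ is meager, Proposition \ref{meagerdenseGdelta} gives a countable dense subset $D$ of $X$ that is $\Gd$ in $X$. Now let $A$ be an arbitrary countable subset of $X$. Using the separability of $X$, fix a countable dense subset $B$ of $X$ with $A\subseteq B$. Both $D$ and $B$ are countable dense subsets of $X$, so by $\CDH$ there is a homeomorphism $h:X\longrightarrow X$ with $h[D]=B$. Since $D$ is $\Gd$ in $X$ and homeomorphisms preserve the $\Gd$ property, $B=h[D]$ is $\Gd$ in $X$.

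It remains to deduce that $A$ itself is $\Gd$ in $X$. The space $B$ is countable and metrizable, so every singleton is closed in $B$; hence $B\setminus A$, being a countable union of singletons, is $\Fs$ in $B$, and therefore $A$ is $\Gd$ in the subspace $B$. I then combine this with the fact that $B$ is $\Gd$ in $X$: writing $A=B\cap G$ with $G$ a $\Gd$ subset of $X$, and $B=\bigcap_{n}W_n$ with each $W_n$ open in $X$, the set $A=G\cap\bigcap_{n}W_n$ is a countable intersection of open sets and a $\Gd$ set, hence $\Gd$ in $X$. Since $A$ was an arbitrary countable subset of $X$, this shows that $X$ is a $\lambda$-set.

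The only slightly delicate point is this last step, where one must pass from the statement that every \emph{dense} countable subset is $\Gd$ (which is exactly what $\CDH$ together with Proposition \ref{meagerdenseGdelta} delivers) to the statement that \emph{every} countable subset is $\Gd$. This is resolved by the two elementary observations above—that every subset of a countable metric space is automatically $\Gd$ in it, and that the $\Gd$ property is transitive through a nested $\Gd$ subspace—so no serious obstacle arises.
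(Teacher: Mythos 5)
Your proof is correct and follows essentially the same route as the paper's: produce one countable dense $\Gd$ set via Proposition \ref{meagerdenseGdelta}, transport it by a $\CDH$ homeomorphism onto a countable dense set containing the given countable set, and conclude by removing a countable (hence $\Fs$) remainder. The only cosmetic difference is that you pass through the subspace topology on $B$, whereas one can note directly that $B\setminus A$ is $\Fs$ in $X$ (singletons are closed in any metric space), so $A=B\cap(X\setminus(B\setminus A))$ is $\Gd$ in $X$.
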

\begin{proof}
By Proposition \ref{meagerdenseGdelta}, there exists a countable dense $\Gd$ subset $A$ of $X$. Now let $D$ be an arbitrary countable subset of $X$. Extend $D$ to a countable dense subset $B$ of $X$. Notice that $B$ is $\Gd$ because there exists a homeomorphism $h:X\longrightarrow X$ such that $h[A]=B$. Since $B\setminus D$ is countable, it follows that $D$ is $\Gd$.
\end{proof}

\begin{proposition}[Hru\v{s}\'ak, Zamora Avil\'es]\label{baire}
Let $X$ be a space such that $X^\omega$ is $\CDH$. Then $X$ is Baire.
\end{proposition}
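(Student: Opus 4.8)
The plan is to argue by contradiction: I assume that $X^\omega$ is $\CDH$ but that $X$ is not Baire, and I extract from $X^\omega$ a single subspace that is simultaneously forced to be $\CDH$ and forced \emph{not} to be $\CDH$. The object that makes this work is the open meager kernel $W$ of $X^\omega$, namely the union of all open subsets of $X^\omega$ that are meager in $X^\omega$. First I would record its features. Since $X$ is not Baire there is a nonempty open meager $V\subseteq X$, and then $\{y\in X^\omega:y_0\in V\}\approx V\times X^\omega$ is a nonempty open meager subset of $X^\omega$ (a product one of whose factors is meager is meager), so $W\neq\varnothing$. Because $X^\omega$ is separable metrizable, hence hereditarily Lindel\"of, $W$ is a countable union of meager sets and is therefore meager, and being open it is meager in itself. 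Finally, and crucially, $W$ is a topological invariant: every autohomeomorphism of $X^\omega$ carries open meager sets to open meager sets, so it maps $W$ onto $W$.

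Next I would show that $W$ is not $\CDH$. As a nonempty open subset of $X^\omega$, the set $W$ contains a basic open box $U_0\times\cdots\times U_{n-1}\times X^{[n,\omega)}$ with the $U_i$ nonempty and open; fixing $a_i\in U_i$ and two distinct points $b,c\in X$ (the nonempty open meager set $V$ has no isolated points, so $X$ is infinite), the subspace $\{a_0\}\times\cdots\times\{a_{n-1}\}\times\{b,c\}^{[n,\omega)}$ is a copy of $2^\omega$ contained in $W$. Thus $W$ is a meager space containing a copy of $2^\omega$, so $W$ is not a $\lambda$-set, and Corollary \ref{cdhlambda} shows that $W$ is not $\CDH$. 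This is exactly where the power structure of $X^\omega$ is essential: for a general non-Baire space the open meager kernel may be countable (for instance $\QQQ$ inside $2^\omega\oplus\QQQ$), and it is precisely the fact that every nonempty open subset of $X^\omega$ contains a copy of $2^\omega$ that rules this out.

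Finally I would show that $W$ \emph{is} $\CDH$, producing the contradiction. Here I use invariance. Given countable dense subsets $A'$ and $B'$ of $W$, I extend them to countable dense subsets $A$ and $B$ of $X^\omega$ by adjoining to each a countable dense subset of the open set $X^\omega\setminus\cl(W)$, arranging that $A\cap W=A'$ and $B\cap W=B'$. Applying the countable dense homogeneity of $X^\omega$ yields a homeomorphism $h:X^\omega\longrightarrow X^\omega$ with $h[A]=B$; since $h[W]=W$ by invariance, the restriction $h\upharpoonright W$ is an autohomeomorphism of $W$ carrying $A'$ to $B'$, so $W$ is $\CDH$, contradicting the previous paragraph. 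Hence $X$ must be Baire. I expect the main obstacle to be conceptual rather than computational: one must locate a subspace that inherits countable dense homogeneity from $X^\omega$ (which forces it to be topologically invariant and open) while also being meager and large enough to contain a copy of $2^\omega$; the open meager kernel is exactly the reconciliation of these competing demands.
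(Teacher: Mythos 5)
Your argument is correct, but it takes a genuinely different route from the paper's. The paper notices that a single non-empty open meager $U\subseteq X$ already forces the \emph{whole} space $X^\omega$ to be meager, via the decomposition $X^\omega=(X\setminus U)^\omega\cup\bigcup_{n\in\omega}M_n$ with $M_n=\{x\in X^\omega:x(n)\in U\}$ meager and $(X\setminus U)^\omega$ closed nowhere dense; Corollary \ref{cdhlambda} then makes $X^\omega$ a $\lambda$-set, contradicting the copy of $2^\omega$ it contains. In your version the open meager kernel $W$ plays the role that $X^\omega$ itself plays in the paper: you never need to see that $W$ is everything (it is, by the paper's decomposition), only that it is non-empty, and you compensate by proving that $\CDH$ passes from $X^\omega$ to the invariant open subspace $W$ — the extension of $A'$ and $B'$ by a countable dense subset of $X^\omega\setminus\cl(W)$ and the identity $h[A\cap W]=h[A]\cap h[W]=B\cap W$ are both sound, as are your checks that $W$ is meager (hereditary Lindel\"ofness), invariant, and contains a copy of $2^\omega$ once $|X|\geq 2$. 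What the paper's decomposition buys is brevity: two lines show that the meager kernel is all of $X^\omega$, so no inheritance lemma is needed. What your localization buys is a reusable principle — in any $\CDH$ space the open meager kernel, being an invariant open $\CDH$ subspace that is meager in itself, must be a $\lambda$-set and hence contain no copy of $2^\omega$ — and your closing remark correctly isolates where the power structure enters: it guarantees that every non-empty open subset of $X^\omega$ contains a copy of $2^\omega$, which is exactly what a general non-Baire space can fail to provide. Both proofs ultimately rest on the same two pillars, namely Corollary \ref{cdhlambda} and the abundance of copies of $2^\omega$ in $X^\omega$.
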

\begin{proof}
If $|X|\leq 1$ then $X$ is obviously Baire, so assume that $|X|\geq 2$. In particular, $X^\omega$ contains a copy of $2^\omega$. Assume, in order to get a contradiction, that $U$ is a non-empty meager open subset of $X$. Let $M_n=\{x\in X^\omega:x(n)\in U\}$ for $n\in\omega$, and observe that each $M_n$ is a meager subset of $X^\omega$. Notice that $X^\omega$ is meager because
$$
X^\omega=(X\setminus U)^\omega\cup\bigcup_{n\in\omega}M_n
$$
and $(X\setminus U)^\omega$ is a closed nowhere dense subset of $X^\omega$. Therefore, $X^\omega$ is a $\lambda$-set by Corollary \ref{cdhlambda}. This contradicts the fact that $X^\omega$ contains a copy of $2^\omega$.
\end{proof}

\begin{theorem}\label{coanalyticcdhpower}
Let $X$ be a coanalytic subspace of $2^\omega$. Then the following are equivalent.
\begin{enumerate}
\item\label{polish} $X$ is Polish.
\item\label{cdh} $X^\omega$ is $\CDH$.
\end{enumerate}
\end{theorem}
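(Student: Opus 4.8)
The plan is to prove the two implications separately, with $(\ref{polish})\Rightarrow(\ref{cdh})$ being the routine direction and $(\ref{cdh})\Rightarrow(\ref{polish})$ carrying the weight of the coanalytic hypothesis.

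For $(\ref{polish})\Rightarrow(\ref{cdh})$, assume $X$ is Polish. Then $X^\omega$ is Polish by the result on countable products, and it is zero-dimensional since $X\subseteq 2^\omega$. The remaining point is that $X^\omega$ is $\SLH$: being the $\omega$-th power of a zero-dimensional (first-countable) space, $X^\omega$ is h-homogeneous, that is, all of its nonempty clopen subspaces are homeomorphic to it, and such spaces are $\SLH$ (take the base of all clopen sets, and use that each nonempty clopen subspace is again h-homogeneous, hence homogeneous, so one can move $x$ to $y$ inside the clopen set and extend by the identity on its complement). The trivial cases $|X|\leq 1$ are handled directly. Theorem \ref{maincdh} then yields that $X^\omega$ is $\CDH$.

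For $(\ref{cdh})\Rightarrow(\ref{polish})$, the key idea is to transfer all the work to $X^\omega$ itself rather than to $X$, since $X^\omega$ is the space we know to be $\CDH$. First I would observe that $X^\omega$ is again coanalytic: viewing $X^\omega$ as a subspace of the Polish space $Z=(2^\omega)^\omega$, it is the countable intersection of the preimages of $X$ under the coordinate projections, hence $\Pi^1_1$. Next, since $(X^\omega)^\omega\approx X^\omega$ is $\CDH$, Proposition \ref{baire} applied to the space $X^\omega$ shows that $X^\omega$ is Baire. Now, being coanalytic, $X^\omega\in\Aa_\sigma(Z)\subseteq\BB_r(Z)$ by Proposition \ref{sigmaanalytic}; combining this with Baireness, Proposition \ref{dichotomy} forces $X^\omega$ to have a dense Polish subspace, which is in particular a dense $\CB$ subspace by Theorem \ref{hurewicz}.

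At this stage the $\CDH$ hypothesis does the crucial upgrading: if $X^\omega$ failed to be $\CB$, then, having a dense $\CB$ subspace, it would not be $\CDH$ by Proposition \ref{notcdh}, a contradiction. Hence $X^\omega$ is $\CB$, and being coanalytic it is Polish by Theorem \ref{hurewicz}. Finally $X$ embeds as a closed subspace of $X^\omega$ (fix $a\in X$ and map $x\mapsto(x,a,a,\dots)$), so $X$ is Polish as well. I expect the main obstacle to be the step producing a dense Polish subspace: it is exactly here that coanalyticity is used, through the property of Baire in the restricted sense and the dichotomy of Proposition \ref{dichotomy}, and the subtlety is to apply that dichotomy to the power $X^\omega$—so that Proposition \ref{notcdh} can subsequently be invoked—rather than to $X$ itself, for which the $\CDH$ hypothesis is not directly available.
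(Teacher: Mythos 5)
Your proof of $(\ref{cdh})\Rightarrow(\ref{polish})$ is correct and is essentially the paper's own argument: the same chain Proposition \ref{baire}, Proposition \ref{sigmaanalytic}, Proposition \ref{dichotomy}, Proposition \ref{notcdh}, Theorem \ref{hurewicz}, with the only (cosmetic) difference that you run the Baire/restricted-property-of-Baire dichotomy on $X^\omega$ (which is again coanalytic, and again Baire via $(X^\omega)^\omega\approx X^\omega$), whereas the paper runs it on $X$ itself and then observes that the $\omega$-th power of the resulting dense Polish subspace is a dense Polish, hence dense $\CB$, subspace of $X^\omega$; both variants are sound, and the closing step (a closed embedding of $X$ into $X^\omega$) is identical. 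The one point you should repair is in the easy direction: the assertion that $X^\omega$ is h-homogeneous \emph{because} it is the $\omega$-th power of a zero-dimensional first-countable space is not something you can cite --- whether this holds for arbitrary zero-dimensional $X$ is exactly the question that Proposition \ref{strengthenhhom} and Theorem \ref{hhompower} address only partially, so as stated this step appeals to an unavailable general theorem. Since your $X$ is Polish the fix is immediate: either invoke Theorem \ref{hhompower} (a Polish space is a dense Polish subspace of itself), or argue as the paper does that $X^\omega$ is a crowded zero-dimensional Polish space which is either compact or nowhere locally compact, hence homeomorphic to $2^\omega$ or to $\omega^\omega$, both of which are homogeneous and zero-dimensional and therefore $\SLH$; Theorem \ref{maincdh} then applies as you intended.
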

\begin{proof}
In order to prove the implication $(\ref{polish})\rightarrow (\ref{cdh})$, assume that $X$ is Polish and that $|X|\geq 2$. Then $X^\omega$ is a crowded zero-dimensional Polish space that is either compact or nowhere locally compact. It follows that $X^\omega\approx 2^\omega$ or $X^\omega\approx\omega^\omega$. In both cases, $X^\omega$ is homogeneous and zero-dimensional, hence $\SLH$. In conclusion, $X^\omega$ is $\CDH$ by Theorem \ref{maincdh}. Notice that Theorem \ref{sufficient} gives an alternative proof of the implication $(\ref{polish})\rightarrow (\ref{cdh})$, since being Polish is obviously stronger than being countably controlled (see Definition \ref{cc}).

In order to prove the implication $(\ref{cdh})\rightarrow (\ref{polish})$, assume that $X^\omega$ is $\CDH$. By Proposition \ref{baire}, it follows that $X$ is Baire. Clearly $X\in\Aa_\sigma(2^\omega)$, so $X\in\BB_r(2^\omega)$ by Proposition \ref{sigmaanalytic}. Therefore, $X$ has a dense Polish subspace by Proposition \ref{dichotomy}. In particular, $X^\omega$ has a dense $\CB$ subspace, hence it is $\CB$ by Proposition \ref{notcdh}. Notice that $X$ is homeomorphic to a closed subspace of $X^\omega$, so it is $\CB$ as well. Since $X$ is coanalytic, it follows that $X$ is Polish by Theorem \ref{hurewicz}.
\end{proof}

\section{Knaster-Reichbach covers}

The results in this section and the next are known and by no means optimal: we simply tried to make the main part of this article as self-contained as possible. Knaster-Reichbach covers were introduced in \cite{knasterreichbach} and have been successfully applied by several authors, including van Engelen, Medvedev and Ostrovski{\u\i}. Let us mention for example the articles \cite{vanengelen}, \cite{medvedevf}, \cite{medvedevb}, \cite{medvedevc}, \cite{medvedevp} and \cite{ostrovskii}, where one can find much more general results than the ones stated here. The first application of this technique to the theory of countable dense homogeneity was recently given by Hern\'andez-Guti\'errez, Hru\v{s}\'ak and van Mill in \cite{hernandezgutierrezhrusakvanmill}.

Fix a homeomorphism $h:E\longrightarrow F$ between closed nowhere dense subsets of $2^\omega$. We will say that $\langle \VV,\WW,\psi\rangle$ is a \emph{Knaster-Reichbach cover} (briefly, a $\KR$-cover) for $\langle 2^\omega\setminus E,2^\omega\setminus F,h\rangle$ if the following conditions hold.
\begin{itemize}
\item $\VV$ is a partition of $2^\omega\setminus E$ consisting of non-empty clopen subsets of $2^\omega$.
\item $\WW$ is a partition of $2^\omega\setminus F$ consisting of non-empty clopen subsets of $2^\omega$.
\item $\psi:\VV\longrightarrow\WW$ is a bijection.
\item If $f:2^\omega\longrightarrow 2^\omega$ is a bijection such that $h\subseteq f$ and $f[V]=\psi(V)$ for every $V\in\VV$, then $f$ is continuous on $E$ and $f^{-1}$ is continuous on $F$.
\end{itemize}
\noindent Whenever $f:2^\omega\longrightarrow 2^\omega$ is a bijection such that $f[V]=\psi(V)$ for every $V\in\VV$, we will say that $f$ \emph{respects} $\psi$.

The following lemma will be the key ingredient at the inductive step in the proof of Theorem \ref{main}. The proof given here is inspired by \cite[Theorem 3.1]{vanmillc}.

\begin{lemma}\label{KRexists}
Let $h:E\longrightarrow F$ be a homeomorphism between closed nowhere dense subsets of $2^\omega$. Then there exists a $\KR$-cover for $\langle 2^\omega\setminus E,2^\omega\setminus F,h\rangle$.
\end{lemma}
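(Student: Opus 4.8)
The plan is to reduce the fourth (dynamical) clause in the definition of a $\KR$-cover to two concrete ``smallness and locality'' requirements on $\psi$, and then to build the partitions and the bijection meeting these requirements by a recursive back-and-forth construction. Fix the standard metric $d$ on $2^\omega$, so that $\diam([s])=2^{-|s|}$; since $E$ and $F$ are compact, both $h$ and $h^{-1}$ are uniformly continuous. First I would dispose of the case $E=\varnothing$ (then $F=\varnothing$ and any clopen partitions with a bijection vacuously work), so assume $E\neq\varnothing$. Being nonempty and nowhere dense, $E$ is not clopen, so $2^\omega\setminus E$ is an open dense set whose maximal cylinders disjoint from $E$ form a countably infinite clopen partition $\VV_0$, the \emph{canonical partition}; define $\WW_0$ for $F$ analogously. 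If $V\in\VV_0$ has diameter $2^{-n}$, then minimality of the cylinder forces a point of $E$ within distance $2^{-(n-1)}$, so ``small diameter'' is synonymous with ``close to $E$''.

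Next I would record the reduction. For each nonempty clopen $V$ choose, by compactness, a nearest point $e_V\in E$, and for each $W$ a nearest $f_W\in F$. I claim the $\KR$-condition holds as soon as $\VV,\WW$ refine $\VV_0,\WW_0$ and $\psi$ satisfies (A): as $\diam(V)\to 0$ one has $\diam(\psi(V))\to 0$ and $d(\psi(V),h(e_V))\to 0$, together with the symmetric condition (B) for $\psi^{-1}$ with $h^{-1}$. Indeed, given a bijection $f\supseteq h$ respecting $\psi$, a point $e\in E$, and a sequence $x_k\to e$, the terms with $x_k\in E$ satisfy $f(x_k)=h(x_k)\to h(e)$; for the terms with $x_k\notin E$, the pieces $V_k\in\VV$ containing them have $\diam(V_k)\to 0$ and nearest points $e_{V_k}\to e$, so $\psi(V_k)\to h(e)$ by (A) and continuity of $h$, whence $f(x_k)\in\psi(V_k)$ gives $f(x_k)\to h(e)$. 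A routine subsequence argument combines the two cases, so $f$ is continuous on $E$; condition (B) gives continuity of $f^{-1}$ on $F$. This is the conceptual heart, and the remainder is bookkeeping.

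Then I would construct $\VV$, $\WW$ and $\psi$ in stages, alternately covering more of $2^\omega\setminus E$ and of $2^\omega\setminus F$ so that both are exhausted in the limit. The two tools are uniform continuity (fix an increasing sequence $(m_n)$ so that $d$-balls of radius $2^{-m_n}$ are carried by $h$ and $h^{-1}$ into $d$-balls of radius $2^{-n}$) and the self-similarity of the Cantor set (every nonempty clopen subset of $2^\omega$ is a copy of $2^\omega$, hence splits into clopen subpieces of arbitrarily small diameter). At a forward stage I take the first canonical $E$-piece not yet fully used, cut its unused part into subcylinders of diameter below the current threshold, and assign each such subpiece $V$ to a hitherto unused clopen piece of $2^\omega\setminus F$ chosen near $h(e_V)$ and of comparably small diameter; such pieces exist because $F$ is nowhere dense, so gap-cylinders accumulate at every point of $F$. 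Backward stages are symmetric, feeding unused clopen pieces of $2^\omega\setminus F$ to pieces near $h^{-1}(f_W)$. Driving the thresholds to $0$ along the stages yields (A) and (B).

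The main obstacle is exactly the two-sidedness: I must keep $\psi$ a genuine bijection between \emph{partitions} of the two complements while enforcing locality in both directions simultaneously, so that an assignment made to satisfy (A) at a forward stage neither spoils (B) later nor leaves some clopen piece of a complement permanently unmatched. The back-and-forth scheduling, together with the observation that a locality-respecting match automatically satisfies the dual estimate (when $f_W\approx h(e_V)$ and diameters are small, $h^{-1}(f_W)\approx e_V$), is what reconciles the differing local cardinalities of $\VV_0$ and $\WW_0$ near corresponding points $e$ and $h(e)$ and closes the argument.
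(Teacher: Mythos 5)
Your argument is correct in outline but takes a genuinely different route from the paper's. The paper glues two copies of $2^\omega$ along $h$ to form a compact metrizable quotient $Z$, takes arbitrary clopen partitions $\VV=\{V_k\}$ and $\WW=\{W_k\}$ with diameters tending to $0$, picks representatives $a_k\in V_k$ and $b_k\in W_k$, observes that both sequences cluster exactly on $E=F$ inside $Z$, and invokes a theorem of von Neumann to obtain a permutation $\pi$ with $\di(a_k,b_{\pi(k)})\to 0$; setting $\psi(V_k)=W_{\pi(k)}$ then finishes the proof. That approach outsources to von Neumann exactly the matching problem you solve by hand, and the quotient metric encodes the two-sided locality automatically; your back-and-forth is more elementary and self-contained (no quotient, no external permutation theorem) at the price of the bookkeeping you describe. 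Your reduction of the fourth clause of the definition to conditions (A) and (B), and the verification that (A) yields continuity of $f$ on $E$, are sound.

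The one soft spot is the closing parenthetical that is supposed to deliver (B) for pieces created at forward stages. From $f_W\approx h(e_V)$ you do get $h^{-1}(f_W)\approx e_V$, but (B) asks that $\psi^{-1}(W)=V$ be close to $h^{-1}(f_W)$, which in addition requires $V$ to be close to $e_V$, i.e.\ close to $E$. A small subcylinder $V$ of a canonical piece of large diameter (say a tiny subcylinder of $[1]$ when $E=\{000\cdots\}$) is far from $e_V$, yet your rule sends it to a $W$ near $h(e_V)\in F$, and for such $W$ the claimed dual estimate simply fails. What rescues the construction is a compactness fact you never state: since the members of $\VV$ are clopen in $2^\omega$ and partition $2^\omega\setminus E$, for each $\delta>0$ only finitely many of them meet the compact set $\{x:\di(x,E)\geq\delta\}$; equivalently, the canonical pieces have diameters tending to $0$ in any enumeration, so the error in the dual estimate at stage $n$ is of the order of the diameter of the canonical piece processed there and tends to $0$, while the finitely many genuinely bad pieces per $\varepsilon$ have diameters bounded below and are therefore harmless for the limit statement (and for continuity, since a sequence converging to a point of $F$ eventually leaves any finite family of clopen sets missing that point). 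You should make this step explicit, and it is cleaner to formulate (A) and (B) with $\di(V,E)\to 0$ and $\di(W,F)\to 0$ in place of $\diam(V)\to 0$ and $\diam(W)\to 0$, since that is what the continuity verification actually supplies and uses.
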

\begin{proof}
The case in which $E$ and $F$ are empty is trivial, so assume that $E$ and $F$ are non-empty. Let $X\oplus Y$ be the disjoint topological sum of two spaces that are homeomorphic to $2^\omega$. Without loss of generality, assume that $E$ is a subspace of $X$ and $F$ is a subspace of $Y$. Consider the equivalence relation on $X\oplus Y$ obtained by identifying $x$ with $h(x)$ for every $x\in E$. Denote by $Z$ the corresponding quotient space. For simplicity, we will freely identify an element of $X\oplus Y$ with its equivalence class in $Z$. Notice that $Z$ is separable and metrizable by \cite[Theorem A.11.2]{vanmilli}. Furthermore, it is clear that $Z$ is compact.

\newpage

Fix an admissible metric $\di$ on $Z$. Fix a partition $\VV$ of $X\setminus E$ consisting of non-empty clopen subsets of $X$ and a partition $\WW$ of $Y\setminus F$ consisting of non-empty clopen subsets of $Y$ such that $\diam(V_k)\to 0$ and $\diam(W_k)\to 0$ as $k\to\infty$, where $\VV=\{V_k:k\in\omega\}$ and $\WW=\{W_k:k\in\omega\}$ are injective enumerations. Pick $a_k\in V_k$ and $b_k\in W_k$ for each $k$. It is easy to check that the sequences $\langle a_k:k\in\omega\rangle$ and $\langle b_k:k\in\omega\rangle$ have the same set of limit points in $Z$, namely $E=F$. Therefore, by a result of von Neumann from \cite[pages 11-12]{vonneumann} (see also \cite{halmos} and \cite{yorke} for simpler proofs), there exists a bijection $\pi:\omega\longrightarrow\omega$ such that $\di(a_k,b_{\pi(k)})\to 0$ as $k\to\infty$.

Define $\psi:\VV\longrightarrow\WW$ by setting $\psi(V_k)=W_{\pi(k)}$ for $k\in\omega$. We claim that $\langle \VV,\WW,\psi\rangle$ is a $\KR$-cover for $\langle 2^\omega\setminus E,2^\omega\setminus F,h\rangle$. Let $f:X\longrightarrow Y$ be a bijection that extends $h$ and respects $\psi$. We need to show that $f$ is continuous on $E$ and $f^{-1}$ is continuous on $F$. Since these proofs are similar, we will only deal with the first statement. So fix $x\in E$, and let $\langle x_n:n\in\omega\rangle$ be a sequence that converges to $x$ in $X$. Let $y=f(x)$, and notice that $x=y$ in $Z$. We will show that the sequence $\langle f(x_n):n\in\omega\rangle$ converges to $y$ in $Y$. Fix a neighborhood $W$ of $y$ in $Y$. Let $\varepsilon >0$ be such that $\Bi(y,\varepsilon)\cap Y\subseteq W$, where $\Bi(y,\varepsilon)=\{z\in Z:\di(z,y)<\varepsilon\}$. It will be enough to show that $f(x_n)\in\Bi(y,\varepsilon)$ for all but finitely many values of $n$.

The case in which $x_n\in E$ for all but finitely many values of $n$ is trivial by the continuity of $h$, so assume that $x_n\notin E$ for infinitely many values of $n$. For every $n\in\omega$ such that $x_n\notin E$, define $k_n\in\omega$ to be the unique index such that $x_n\in V_{k_n}$, and notice that $f(x_n)\in W_{\pi(k_n)}$ because $f$ respects $\psi$. Furthermore, it is easy to check that $b_{\pi(k_n)}\to y$ as $n\to\omega$, since $a_{k_n}\to x=y$ and $\di(a_{k_n},b_{\pi(k_n)})\to 0$ as $n\to\omega$. Therefore, given that
$$
\di(f(x_n),y)\leq\di(f(x_n),b_{\pi(k_n)})+\di(b_{\pi(k_n)},y),
$$
there exists $m\in\omega$ such that $f(x_n)\in\Bi(y,\varepsilon)$ whenever $n\geq m$ and $x_n\notin E$. Finally, since $h$ is continuous, we can also assume without loss of generality that $f(x_n)\in\Bi(y,\varepsilon)$ whenever $n\geq m$ and $x_n\in E$. 
\end{proof}

\section{Knaster-Reichbach systems}

Throughout this section, we will denote by $\di$ a fixed admissible metric on $2^\omega$. We will say that a sequence $\langle\langle h_n,\KK_n\rangle:n\in\omega\rangle$ is a \emph{Knaster-Reichbach system} (briefly, a $\KR$-system) if the following conditions are satisfied.
\begin{enumerate}
\item Each $h_n:E_n\longrightarrow F_n$ is a homeomorphism between closed nowhere dense subsets of $2^\omega$.
\item\label{hincreasing} $h_m\subseteq h_n$ whenever $m\leq n$.
\item\label{KRcovercondition} Each $\KK_n=\langle \VV_n,\WW_n,\psi_n\rangle$ is a $\KR$-cover for $\langle 2^\omega\setminus E_n,2^\omega\setminus F_n,h_n\rangle$.
\item\label{mesh} $\mesh(\VV_n)\leq 2^{-n}$ and $\mesh(\WW_n)\leq 2^{-n}$ for each $n$.
\item\label{refinement} $\VV_m$ refines $\VV_n$ and $\WW_m$ refines $\WW_n$ whenever $m\geq n$.
\item\label{coherencepsi} Given $U\in\VV_m$ and $V\in\VV_n$ with $m\geq n$, then $U\subseteq V$ if and only if $\psi_m(U)\subseteq\psi_n(V)$.
\end{enumerate}

\begin{theorem}\label{KRsystem}
Assume that $\langle\langle h_n,\KK_n\rangle:n\in\omega\rangle$ is a $\KR$-system. Then there exists a homeomorphism $h:2^\omega\longrightarrow 2^\omega$ such that $h\supseteq\bigcup_{n\in\omega}h_n$.
\end{theorem}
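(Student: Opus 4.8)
The plan is to build $h$ by hand: set $E=\bigcup_n E_n$ and $F=\bigcup_n F_n$, and let $h$ agree with the bijection $\bigcup_n h_n\colon E\to F$ on $E$ (this is well-defined and injective by condition \ref{hincreasing}). For $x\in 2^\omega\setminus E$, every cover $\VV_n$ partitions $2^\omega\setminus E_n\ni x$, so $x$ lies in a unique $V_n(x)\in\VV_n$; by the refinement condition \ref{refinement} these form a decreasing sequence, and applying $\psi_n$ together with the coherence condition \ref{coherencepsi} the clopen sets $W_n(x)=\psi_n(V_n(x))$ decrease as well. Since $\mesh(\WW_n)\leq 2^{-n}$ by condition \ref{mesh}, the intersection $\bigcap_n W_n(x)$ is a singleton, and I define $h(x)$ to be its unique element. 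Note that $h(x)\notin F_n$ for every $n$, so $h$ sends $2^\omega\setminus E$ into $2^\omega\setminus F$.

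First I would check that $h$ is a bijection of $2^\omega$. Injectivity on $2^\omega\setminus E$ holds because distinct points are eventually separated by the shrinking $V_n$'s, hence by the disjoint $W_n$'s; surjectivity onto $2^\omega\setminus F$ is proved by running the construction backwards, starting from the unique members of $\WW_n$ containing a given $y\notin F$ and using $\psi_n^{-1}$ together with conditions \ref{refinement}, \ref{coherencepsi} and \ref{mesh} to recover a preimage $x\notin E$. Since $\bigcup_n h_n$ bijects $E$ onto $F$ and the two halves of $h$ have complementary images, $h$ is a bijection.

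The heart of the argument is to show that $h$ \emph{respects} every $\psi_n$, that is, $h[V]=\psi_n(V)$ for each $V\in\VV_n$. For $x\in V$ with $x\notin E$ this is immediate from the definition, since then $V_n(x)=V$ and $h(x)\in W_n(x)=\psi_n(V)$. The obstacle is the case $x\in V\cap E_m$ with $m>n$, where $h(x)=h_m(x)$ and I must show $h_m(x)\in\psi_n(V)$; here a direct appeal to the continuity of $h$ at $x$ would be circular, since that continuity is exactly what I am trying to establish. I would break the circularity with an auxiliary map: fix any bijection $f\colon 2^\omega\to 2^\omega$ with $h_m\subseteq f$ that respects $\psi_m$ (one exists because each member of $\VV_m$ and its $\psi_m$-image are non-empty clopen subsets of $2^\omega$, hence copies of $2^\omega$). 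By the $\KR$-cover condition \ref{KRcovercondition} for $\KK_m$, the map $f$ is continuous on $E_m$. Approximating $x$ by a sequence $x_k\to x$ with $x_k\in V\setminus E_m$ (possible since $E_m$ is nowhere dense), each $x_k$ lies in some $V'_k\in\VV_m$ with $V'_k\subseteq V$ by condition \ref{refinement}, so $f(x_k)\in\psi_m(V'_k)\subseteq\psi_n(V)$ by condition \ref{coherencepsi}; as $\psi_n(V)$ is closed, the limit $h_m(x)=f(x)$ lies in $\psi_n(V)$, as desired. The inclusions $h[V]\subseteq\psi_n(V)$, together with $h[E_n]=F_n$ and the fact that $\psi_n$ matches the partitions $\VV_n$ and $\WW_n$, then upgrade to the equalities $h[V]=\psi_n(V)$ by a straightforward partition-counting argument.

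With this in hand, continuity of $h$ is routine. At a point $x\notin E$, the clopen set $V_n(x)$ is a neighbourhood of $x$ with $h[V_n(x)]=\psi_n(V_n(x))=W_n(x)$ of diameter at most $2^{-n}$, which gives continuity at $x$; at a point of $E_n$, continuity follows directly from the $\KR$-cover condition \ref{KRcovercondition} for $\KK_n$, since $h\supseteq h_n$ and $h$ respects $\psi_n$. Thus $h$ is a continuous bijection from the compact space $2^\omega$ onto the Hausdorff space $2^\omega$, hence a homeomorphism, and $h\supseteq\bigcup_n h_n$ by construction. I expect the only genuine difficulty to be the circularity in the boundary case of the ``respects $\psi_n$'' step, which the auxiliary map $f$ is designed precisely to resolve.
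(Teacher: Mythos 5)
Your construction of $h$ is exactly the paper's: extend $\bigcup_{n}h_n$ on $E$, send each $x\notin E$ to the unique point of $\bigcap_{n}\psi_n(V_n^x)$, verify that the resulting bijection respects every $\psi_n$, and then get continuity on $E$ from the $\KR$-cover condition and continuity off $E$ from the mesh condition. The proof is correct and follows the same route; the only difference is that you spell out, via the auxiliary bijection $f$ and a density argument, the one genuinely non-trivial point --- that $h(x)\in\psi_n(V)$ when $x\in V\cap E_m$ with $m>n$ --- which the paper compresses into ``it is straightforward to verify that $h$ respects $\psi_n$''.
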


\begin{proof}
Let $E=\bigcup_{n\in\omega}E_n$ and $F=\bigcup_{n\in\omega}F_n$. Given $x\in 2^\omega\setminus E$ and $n\in\omega$, denote by $V_n^x$ the unique element of $\VV_n$ that contains $x$. Given $y\in 2^\omega\setminus F$ and $n\in\omega$, denote by $W_n^y$ the unique element of $\WW_n$ that contains $y$.

\newpage

If $x\in E_n$ for some $n\in\omega$, define $h(x)=h_n(x)$. The choice of $n$ is irrelevant by condition (\ref{hincreasing}). Now assume that $x\in 2^\omega\setminus E$. Notice that every finite subset of $\{\psi_n(V_n^x):n\in\omega\}$ has non-empty intersection by conditions (\ref{refinement}) and (\ref{coherencepsi}). Since $2^\omega$ is compact and condition (\ref{mesh}) holds, it follows that there exists $y\in 2^\omega$ such that $\bigcap_{n\in\omega}\psi_n(V_n^x)=\{y\}$. Set $h(x)=y$. This concludes the definition of $h$.

Similarly, define $g:2^\omega\longrightarrow 2^\omega$ by setting $g(y)=h_n^{-1}(y)$ if $y\in F_n$ for some $n\in\omega$, and $g(y)=x$ if $y\in 2^\omega\setminus F$, where $x\in 2^\omega$ is such that $\bigcap_{n\in\omega}\psi_n^{-1}(W_n^y)=\{x\}$. It is easy to check that $g=h^{-1}$, hence $h$ is a bijection.

It is straightforward to verify that $h$ respects $\psi_n$ for each $n$. Therefore, by condition (\ref{KRcovercondition}), $h$ is continuous on $E$ and $h^{-1}$ is continuous on $F$. It remains to show that $h$ is continuous on $2^\omega\setminus E$ and that $h^{-1}$ is continuous on $2^\omega\setminus F$. Since these proofs are similar, we will only deal with the first statement. Fix $x\in 2^\omega\setminus E$, and let $y=h(x)$. Fix a neighborhood $W$ of $y$ in $2^\omega$. By condition (\ref{mesh}), there exists $n\in\omega$ such that $W_n^y\subseteq W$. It remains to observe that $h[V_n^x]=W_n^y$.
\end{proof}

\begin{corollary}\label{KRcor}
Let $X$ be a subspace of $2^\omega$. Assume that $\langle\langle h_n,\KK_n\rangle:n\in\omega\rangle$ is a $\KR$-system satisfying the following additional conditions.
\begin{enumerate}
\item[(7)] $2^\omega\setminus\bigcup_{n\in\omega}E_n\subseteq X$.
\item[(8)] $2^\omega\setminus\bigcup_{n\in\omega}F_n\subseteq X$.
\item[(9)] $h_n[X\cap E_n]=X\cap F_n$ for each $n$.
\end{enumerate}
Then there exists a homeomorphism $h:2^\omega\longrightarrow 2^\omega$ such that $h\supseteq\bigcup_{n\in\omega}h_n$ and $h[X]=X$.
\end{corollary}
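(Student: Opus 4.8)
The plan is to invoke Theorem~\ref{KRsystem} to obtain a homeomorphism $h:2^\omega\longrightarrow 2^\omega$ with $h\supseteq\bigcup_{n\in\omega}h_n$, and then to verify separately that this particular $h$ satisfies $h[X]=X$. The conclusion $h\supseteq\bigcup_{n\in\omega}h_n$ is immediate from Theorem~\ref{KRsystem}, since the hypotheses $(1)$--$(6)$ of a $\KR$-system are assumed; so all the work lies in exploiting the three additional conditions $(7)$--$(9)$ to control the behavior of $h$ on $X$.

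First I would split $2^\omega$ into the two regions on which $h$ was defined in the proof of Theorem~\ref{KRsystem}: the set $E=\bigcup_n E_n$, where $h$ agrees with $\bigcup_n h_n$, and its complement $2^\omega\setminus E$, where $h(x)$ is the unique point of $\bigcap_n\psi_n(V_n^x)$. On the first region, condition $(9)$ does the job directly: for $x\in X\cap E$ we have $x\in E_n$ for some $n$, so $h(x)=h_n(x)\in h_n[X\cap E_n]=X\cap F_n\subseteq X$; conversely $h^{-1}=g$ behaves symmetrically using $g(y)=h_n^{-1}(y)$ and the reverse inclusion $h_n^{-1}[X\cap F_n]=X\cap E_n$, also furnished by $(9)$. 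On the second region, the key observation is that conditions $(7)$ and $(8)$ pin down membership in $X$ for the leftover points: if $x\in 2^\omega\setminus E$ then $y=h(x)$ is either in $F$ or in $2^\omega\setminus F$, and in the latter case $(8)$ forces $y\in X$ outright. Thus for a point $x\in X\setminus E$ I must show $h(x)\in X$, distinguishing whether $h(x)$ lands in $F$ or outside it.

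The main obstacle is precisely this last case analysis on $2^\omega\setminus E$, where the value $h(x)$ is defined only implicitly as an intersection and is not governed by any single $h_n$. The cleanest way I would handle it is to argue by symmetry and reduce everything to membership: I claim that for every $x\in 2^\omega$, one has $x\in X$ if and only if $h(x)\in X$, which immediately yields $h[X]=X$. For $x\in E$ this is condition $(9)$ combined with $(7)$ and $(8)$; for $x\notin E$, observe that $h(x)\in F$ would put $x=h^{-1}(h(x))=g(h(x))\in E$ (a contradiction since $g$ maps $F$ into $E$ via the $h_n^{-1}$), so in fact $x\in 2^\omega\setminus E$ forces $h(x)\in 2^\omega\setminus F$. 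Then by $(7)$ we have $x\in X$ automatically and by $(8)$ we have $h(x)\in X$ automatically, so the biconditional holds trivially on this region.

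Finally I would assemble these pieces: the biconditional $x\in X\Leftrightarrow h(x)\in X$ gives $h[X]\subseteq X$ and, applied to $h^{-1}$ (which is $g$ and enjoys the same structure by the symmetry already used throughout the proof of Theorem~\ref{KRsystem}), also $h^{-1}[X]\subseteq X$, whence $X\subseteq h[X]$. Therefore $h[X]=X$, completing the proof. The only point demanding care is the claim that $x\notin E$ implies $h(x)\notin F$, i.e.\ that $h$ maps $2^\omega\setminus E$ bijectively onto $2^\omega\setminus F$; this follows from the identity $g=h^{-1}$ established in Theorem~\ref{KRsystem} together with the fact that $g$ sends $F$ into $E$, so I would state it explicitly rather than leave it implicit.
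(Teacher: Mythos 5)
Your proposal is correct and follows essentially the same route as the paper: apply Theorem~\ref{KRsystem} to get $h$, then check $h[X]\subseteq X$ and $h^{-1}[X]\subseteq X$ by splitting on whether the point lies in $\bigcup_n E_n$ (where condition (9) applies) or in its complement (where conditions (7) and (8) apply). Your explicit justification that $h$ carries $2^\omega\setminus\bigcup_n E_n$ into $2^\omega\setminus\bigcup_n F_n$ is a detail the paper leaves implicit, and is a welcome addition rather than a deviation.
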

\begin{proof}
By Theorem \ref{KRsystem}, there exists a homeomorphism $h:2^\omega\longrightarrow 2^\omega$ such that $h\supseteq\bigcup_{n\in\omega}h_n$. In order to show that $h[X]\subseteq X$, fix $x\in X$. If $x\in\bigcup_{n\in\omega}E_n$, then $h(x)\in X$ by condition (9). On the other hand, if $x\in 2^\omega\setminus\bigcup_{n\in\omega}E_n$ then $h(x)\in 2^\omega\setminus\bigcup_{n\in\omega}F_n$, which implies $h(x)\in X$ by condition (8). A similar argument shows that $h^{-1}[X]\subseteq X$. It follows that $h[X]=X$.
\end{proof}

\section{The main result}

The following two definitions are crucial for our purposes. Recall that a \emph{$\pi$-base} for a space $Z$ is a collection $\BB$ consisting of non-empty open subsets of $Z$ such that for every non-empty open subset $U$ of $Z$ there exists $V\in\BB$ such that $V\subseteq U$.

\begin{definition}
Let $X$ be a subspace of $Z$. We will say that $X$ is \emph{h-homogeneously embedded} in $Z$ if there exists a $\pi$-base $\BB$ for $Z$ consisting of clopen sets and homeomorphisms $\varphi_U:Z\longrightarrow U$ for $U\in\BB$ such that $\varphi_U[X]=X\cap U$.
\end{definition}

\begin{definition}\label{cc}
We will say that a space $X$ is \emph{countably controlled} if for every countable $D\subseteq X$ there exists a Polish subspace $G$ of $X$ such that $D\subseteq G\subseteq X$.
\end{definition}

The technique used in the proof of the following theorem is essentially due to Medvedev (see \cite[Theorem 5]{medvedevp}).
\begin{theorem}\label{main}
Assume that $X$ is h-homogeneously embedded in $2^\omega$ and countably controlled. Then $X$ is $\CDH$. 
\end{theorem}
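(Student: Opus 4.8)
The plan is to build, for an arbitrary pair $(A,B)$ of countable dense subsets of $X$, a homeomorphism $h:2^\omega\longrightarrow 2^\omega$ with $h[X]=X$ and $h[A]=B$; restricting $h$ to $X$ then witnesses $\CDH$. After disposing of the trivial cases $|X|\leq 1$, note that the embedding forces $X$ to be crowded and dense in $2^\omega$: if $x$ were isolated in $X$, a clopen $U\in\BB$ sitting inside a clopen set that isolates $x$ would give a homeomorphic copy $X\cap U=\varphi_U[X]\subseteq\{x\}$, so $X$ would be a single point. Hence $A$ and $B$ are dense in $2^\omega$. The homeomorphism $h$ will be produced from a $\KR$-system via Corollary \ref{KRcor}; the whole difficulty is to drive the recursion so that conditions (7)--(9) hold \emph{and} $A$ is carried onto $B$.

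First I would invoke countable control to fix a Polish $G$ with $A\cup B\subseteq G\subseteq X$. Since $G\supseteq A$ is dense, $G$ is a dense $\Gd$ subset of $2^\omega$, so $2^\omega\setminus G$ is meager $\Fs$ and thus equals $\bigcup_n C_n$ for an increasing sequence of closed nowhere dense sets. As $G\subseteq X$ we have $2^\omega\setminus X\subseteq 2^\omega\setminus G$, so arranging $\bigcup_n E_n\supseteq 2^\omega\setminus G$ and $\bigcup_n F_n\supseteq 2^\omega\setminus G$ will secure (7) and (8). This is exactly where h-homogeneity becomes indispensable: when $X$ is not Polish the sets $C_n$ unavoidably meet $X\setminus G$, so the boundary map must send $X$-points of a filler chunk back into $X$ in order to maintain (9). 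To this end I would, for each clopen piece $V$ occurring in some $\VV_n$ and its image $W=\psi_n(V)$, produce a homeomorphism $\eta_V:V\longrightarrow W$ with $\eta_V[X\cap V]=X\cap W$, and define the boundary map on any filler chunk lying in $V$ by restricting $\eta_V$. Such $\eta_V$ exist because $(2^\omega,X)$ embeds in an $X$-preserving, clopen fashion into $(V,X\cap V)$ via some $\varphi_U$ with $U\in\BB$, $U\subseteq V$, while $(V,X\cap V)$ embeds into $(2^\omega,X)$ via the inclusion; a Cantor--Schr\"oder--Bernstein argument for clopen pairs then yields an $X$-preserving homeomorphism between $V$ and $2^\omega$, and likewise for $W$.

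The core is a back-and-forth recursion producing the $\KR$-system $\langle\langle h_n,\KK_n\rangle:n\in\omega\rangle$. At stage $n+1$ I would in turn: (forth) take the first $a\in A$ not yet in the domain, choose inside the image piece $W=\psi_n(V_n^a)$ an unused point $b\in B$ (possible since $B$ is dense in $2^\omega$), and put $a$ into $E_{n+1}$ with $h_{n+1}(a)=b$; (back) do the symmetric step for the first $b\in B$ not yet in the range, picking an unused preimage in $A$; and (filler) adjoin $C_n$ to both $E_{n+1}$ and $F_{n+1}$, mapping each new filler chunk by the $X$-respecting $\eta_V$'s while choosing its image (resp.\ preimage) to avoid the reserved countable set $B$ (resp.\ $A$), so that the explicit $A\leftrightarrow B$ matching is never disturbed. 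Having fixed the enlarged closed nowhere dense sets $E_{n+1}$, $F_{n+1}$ and the extended homeomorphism $h_{n+1}$, I would build the refined cover $\KK_{n+1}$ by applying Lemma \ref{KRexists} \emph{inside each piece} $V\in\VV_n$ to the local data $\langle V\setminus E^V,\,W\setminus F^V,\,h_{n+1}\!\upharpoonright E^V\rangle$, where $E^V,F^V$ are the freshly added boundary traces, and then subdividing further to force $\mesh\leq 2^{-(n+1)}$; matching sub-pieces of $V$ only with sub-pieces of $W$ yields the refinement and coherence conditions (5) and (6).

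The main obstacle is precisely this inductive step: one must verify that the locally assembled $\KK_{n+1}$ is a genuine $\KR$-cover for $\langle 2^\omega\setminus E_{n+1},2^\omega\setminus F_{n+1},h_{n+1}\rangle$ --- that every bijection extending $h_{n+1}$ and respecting $\psi_{n+1}$ is continuous on $E_{n+1}$ with continuous inverse on $F_{n+1}$, by combining the old cover's continuity at $E_n$ with the new local covers' continuity at the added boundary --- all while \emph{simultaneously} honouring (1)--(9) and the avoid-$A$/avoid-$B$ constraints at every stage. Once the recursion is complete, conditions (1)--(6) with Theorem \ref{KRsystem}, strengthened by (7)--(9) through Corollary \ref{KRcor}, deliver a homeomorphism $h:2^\omega\longrightarrow 2^\omega$ with $h\supseteq\bigcup_n h_n$ and $h[X]=X$; since the forth and back steps guarantee $A\subseteq\bigcup_n E_n$, $B\subseteq\bigcup_n F_n$ and $(\bigcup_n h_n)[A]=B$, we conclude $h[A]=B$, and $h\!\upharpoonright X$ is the required homeomorphism of $X$.
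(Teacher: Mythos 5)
Your overall architecture matches the paper's: build a $\KR$-system whose domains and ranges absorb $2^\omega\setminus G$ for a Polish $G$ between $A\cup B$ and $X$ together with the points of $A$ and $B$, keep the partial homeomorphisms $X$-preserving by copying the ``filler'' pieces around with the maps $\varphi_U$, and finish with Corollary \ref{KRcor}. But there is a genuine gap at the one point where the whole difficulty is concentrated, namely keeping the images of the filler sets away from $B$ (and their preimages away from $A$) so that the explicit $A\leftrightarrow B$ matching survives. You say you will map each filler chunk $V\cap C_n$ by an $X$-respecting $\eta_V$ ``while choosing its image to avoid the reserved countable set $B$''; but once $\eta_V$ is fixed, the image $\eta_V[V\cap C_n]$ is a fixed closed nowhere dense subset of $W$ and there is nothing left to choose, and since $B$ is dense in $W$ you cannot arrange avoidance by shrinking the target either. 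If $\eta_V$ sends some point of $C_n\setminus A$ into $B$, your analogue of the paper's condition (VI) fails and $h[A]\neq B$. The paper's mechanism here is the step you are missing: before invoking countable control, close $A\cup B$ under all the maps $\varphi_U^{-1}$ to obtain a countable dense $D\subseteq X$ with $\varphi_U^{-1}[D\cap U]\subseteq D$ for every $U\in\BB$, and choose the Polish $G$ to contain $D$, not just $A\cup B$. Then every filler set lies in $2^\omega\setminus G\subseteq 2^\omega\setminus D$, and its image under any $\varphi_U$ is automatically disjoint from $D\supseteq A\cup B$. Without this closure the recursion cannot be completed as described.

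A second, independent problem is your construction of the $X$-preserving homeomorphisms $\eta_V:V\longrightarrow W$ by ``a Cantor--Schr\"oder--Bernstein argument for clopen pairs''. The topological Schr\"oder--Bernstein property for clopen embeddings fails in general, and you give no argument for it in this setting; the usual decomposition produces pieces that are not clopen, so continuity at their common boundary is precisely the issue at stake. The paper sidesteps this entirely: it never needs a global $X$-preserving homeomorphism of $V$ onto $W$, only an $X$-preserving embedding of the closed nowhere dense filler $S\subseteq V$ into $W$, and this is provided directly by a single $\varphi_{U(T)}$ with $U(T)\in\BB$, $U(T)\subseteq W$, since the image of $S$ under such a map is still closed and nowhere dense --- which is all the $\KR$-cover machinery requires. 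Both gaps are repaired by the same two ideas (the closed-off set $D$ and the ``copy into a small basic clopen set'' trick), so you are close, but as written the inductive step does not go through.
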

\begin{proof}
If $X$ is empty then $X$ is obviously $\CDH$, so assume that $X$ is non-empty. Since $X$ is h-homogeneously embedded in $2^\omega$, there exists a (countable) $\pi$-base $\BB$ for $2^\omega$ consisting of clopen sets and homeomorphisms $\varphi_U:2^\omega\longrightarrow U$ for $U\in\BB$ such that $\varphi_U[X]=X\cap U$. In particular, $X$ is dense in $2^\omega$.

Fix a pair $(A,B)$ of countable dense subsets of $X$. Let $D_0=A\cup B$, and given $D_n$ for some $n\in\omega$, define
$$
D_{n+1}=\bigcup\{\varphi_U^{-1}[D_n\cap U]:U\in\BB\}.
$$
In the end, let $D=\bigcup_{n\in\omega}D_n$. It is easy to check that $D$ is a countable dense subset of $2^\omega$ such that $A\cup B\subseteq D\subseteq X$. Furthermore, it is clear that $\varphi_U^{-1}(x)\in D$ whenever $x\in D$ and $U\in\BB$ is such that $x\in U$.

Since $X$ is countably controlled, it is possible to find a $\Gd$ subset $G$ of $2^\omega$ such that $D\subseteq G\subseteq X$. By removing countably many points from $G$, we can assume without loss of generality that $2^\omega\setminus G$ is dense in $2^\omega$. Fix closed nowhere dense subsets $K_\ell$ of $2^\omega$ for $\ell\in\omega$ such that $2^\omega\setminus G=\bigcup_{\ell\in\omega}K_\ell$. Also fix the following injective enumerations.
\begin{itemize}
\item $A=\{a_i:i\in\omega\}$.
\item $B=\{b_j:j\in\omega\}$.
\end{itemize}

Fix an admissible metric $\di$ on $2^\omega$ such that $\diam(2^\omega)\leq 1$. Our strategy is to construct a suitable $\KR$-system $\langle\langle h_n,\KK_n\rangle:n\in\omega\rangle$, then apply Corollary \ref{KRcor} to get a homeomorphism $h:2^\omega\longrightarrow 2^\omega$ such that $h\supseteq\bigcup_{n\in\omega}h_n$ and $h[X]=X$. We will use the same notation as in Section 6. In particular, $h_n:E_n\longrightarrow F_n$ and $\KK_n=\langle\VV_n,\WW_n,\psi_n\rangle$ for each $n$.

Of course, we will have to make sure that conditions (1)-(6) in the definition of a $\KR$-system are satisfied. Furthermore, we will make sure that the following additional conditions are satisfied for every $n\in\omega$.
\begin{enumerate}
\item[(I)] $\bigcup_{\ell<n}K_\ell\subseteq E_n$.
\item[(II)] $\bigcup_{\ell<n}K_\ell\subseteq F_n$.
\item[(III)] $h_n[X\cap E_n]=X\cap F_n$.
\item[(IV)] $\{a_i:i<n\}\subseteq E_n$.
\item[(V)] $\{b_j:j<n\}\subseteq F_n$.
\item[(VI)] $h_n[A\cap E_n]=B\cap F_n$.
\end{enumerate}
Conditions (I)-(III) will guarantee that conditions (7)-(9) in Corollary \ref{KRcor} hold. On the other hand, conditions (IV)-(VI) will guarantee that $h[A]=B$.

Start by letting $h_0=\varnothing$ and $\KK_0=\langle\{2^\omega\},\{2^\omega\},\{\langle 2^\omega,2^\omega\rangle\}\rangle$. Now assume that $\langle h_n,\KK_n\rangle$ is given. First, for any given $V\in\VV_n$, we will define a homeomorphism $h_V:E_V\longrightarrow F_V$, where $E_V$ will be a closed nowhere dense subset of $V$ and $F_V$ will be a closed nowhere dense subset of $\psi_n(V)$. So fix $V\in\VV_n$, and let $W=\psi_n(V)$.

Define the following indices.
\begin{itemize}
\item $\ell(V)=\mini\{\ell\in\omega:K_\ell\cap V\neq\varnothing\}$.
\item $\ell(W)=\mini\{\ell\in\omega:K_\ell\cap W\neq\varnothing\}$.
\item $i(V)=\mini\{i\in\omega:a_i\in V\setminus K_{\ell(V)}\}$.
\item $j(W)=\mini\{j\in\omega:b_j\in W\setminus K_{\ell(W)}\}$.
\end{itemize}
Notice that the indices $\ell(V)$ and $\ell(W)$ are well-defined because $\bigcup_{\ell\in\omega}K_\ell=2^\omega\setminus G$ is dense in $2^\omega$.

Let $S=(V\cap K_{\ell(V)})$. Since $K_{\ell(V)}$ is a closed nowhere dense subset of $2^\omega$, we can fix $U(S)\in\BB$ such that $U(S)\subseteq V\setminus (S\cup\{a_{i(V)}\})$. Let $T=(W\cap K_{\ell(W)})$. Since $K_{\ell(W)}$ is a closed nowhere dense subset of $2^\omega$, we can fix $U(T)\in\BB$ such that $U(T)\subseteq W\setminus (T\cup\{b_{j(W)}\})$.

\newpage

Define $E_V=\{a_{i(V)}\}\cup S\cup\varphi_{U(S)}[T]$ and $F_V=\{b_{j(W)}\}\cup T\cup\varphi_{U(T)}[S]$. Observe that $E_V$ is a closed nowhere dense subset of $V$ and $F_V$ is a closed nowhere dense subset of $W$. Define $h_V:E_V\longrightarrow F_V$ by setting
$$
\left.
\begin{array}{lcl}
& & h_V(x)= \left\{
\begin{array}{ll}
b_{j(W)} & \textrm{if }x=a_{i(V)},\\
\varphi_{U(T)}(x) & \textrm{if }x\in S,\\
(\varphi_{U(S)})^{-1}(x) & \textrm{if }x\in\varphi_{U(S)}[T].
\end{array}
\right.
\end{array}
\right.
$$
It is clear that $h_V$ is a homeomorphism. Therefore, by Lemma \ref{KRexists}, there exists a $\KR$-cover $\langle\VV_V,\WW_V,\psi_V\rangle$ for $\langle V\setminus E_V,W\setminus F_V,h_V\rangle$. Furthermore, it is easy to realize that $h_V[X\cap E_V]=X\cap F_V$, which will allow us to mantain condition (III).

Notice that $\phi_{U(S)}[T]\cap D=\varnothing$, because $\phi_{U}[K_\ell]\cap D=\varnothing$ for every $U\in\BB$ and $\ell\in\omega$ by the choice of $D$. Similarly, one sees that $\phi_{U(T)}[S]\cap D=\varnothing$. Since $A\cup B\subseteq D$, it follows that $h_V[A\cap E_V]=h_V[\{a_{i(V)}\}]=\{b_{j(W)}\}=B\cap F_V$, which will allow us to mantain condition (VI).

Repeat this construction for every $V\in\VV_n$, then let $E_{n+1}=E_n\cup\bigcup\{E_V:V\in\VV_n\}$ and $F_{n+1}=F_n\cup\bigcup\{F_V:V\in\VV_n\}$. Define
$$
h_{n+1}=h_n\cup\bigcup_{V\in\VV_n}h_V,
$$
and observe that $h_{n+1}:E_{n+1}\longrightarrow F_{n+1}$ is a bijection. Now extend $h_V$ to a bijection $f_V:V\longrightarrow\psi_n(V)$ for every $V\in\VV_n$, and let $f_n=h_n\cup\bigcup_{V\in\VV_n}f_V$. Clearly, $f_n:2^\omega\longrightarrow 2^\omega$ is a bijection that extends $h_{n+1}\supseteq h_n$ and respects $\psi_n$. Since $\KK_n=\langle\VV_n,\WW_n,\psi_n\rangle$ is a $\KR$-cover for $\langle 2^\omega\setminus E_n, 2^\omega\setminus F_n,h_n\rangle$, it follows that $h_{n+1}$ is continuous on $E_n$ and $h_{n+1}^{-1}$ is continuous on $F_n$. On the other hand, it is straightforward to check that $h_{n+1}$ is continuous on $E_{n+1}\setminus E_n=\bigcup\{E_V:V\in\VV_n\}$ and $h_{n+1}^{-1}$ is continuous on $F_{n+1}\setminus F_n=\bigcup\{F_V:V\in\VV_n\}$. In conclusion, $h_{n+1}$ is a homeomorphism.

Finally, we define $\KK_{n+1}=\langle\VV_{n+1},\WW_{n+1},\psi_{n+1}\rangle$. Let $\VV_{n+1}=\bigcup\{\VV_V:V\in\VV_n\}$ and $\WW_{n+1}=\bigcup\{\WW_V:V\in\VV_n\}$. By further refining $\VV_{n+1}$ and $\WW_{n+1}$, we can assume that $\mesh(\VV_{n+1})\leq 2^{-(n+1)}$ and $\mesh(\WW_{n+1})\leq 2^{-(n+1)}$. Let $\psi_{n+1}=\bigcup_{V\in\VV_n}\psi_V$. Using the fact that $\langle\VV_V,\WW_V,\psi_V\rangle$ is a $\KR$-cover for $\langle V\setminus E_V,W\setminus F_V,h_V\rangle$ for each $V\in\VV_n$ together with condition (\ref{KRcovercondition}), it is easy to realize that $\KK_{n+1}$ is a $\KR$-cover for $\langle 2^\omega\setminus E_{n+1}, 2^\omega\setminus F_{n+1},h_{n+1}\rangle$.
\end{proof}

\section{Infinite powers and $\lambda'$-sets}

The main result of this section is Theorem \ref{mainexample}, which simultaneously answers Question \ref{Gdeltaquestion}, the first part of Question \ref{optimizedefinability}, and shows that Theorem \ref{coanalyticcdhpower} is sharp. The idea of looking at (the complements of) $\lambda'$-sets is inspired by a recent article of Hern\'andez-Guti\'errez, Hru\v{s}\'ak, and van Mill (more precisely, by \cite[Theorem 4.5]{hernandezgutierrezhrusakvanmill}).

We will need a few preliminary results. The straightforward proofs of the following two propositions are left to the reader.

\begin{proposition}\label{prodpreservehhomemb}
Let $I$ be a countable set. If $X_i$ is h-homogeneously embedded in $Z_i$ for every $i\in I$ then $\prod_{i\in I}X_i$ is h-homogeneously embedded in $\prod_{i\in I}Z_i$.
\end{proposition}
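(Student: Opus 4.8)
The plan is to build the desired $\pi$-base out of ``finite-support boxes'' whose nontrivial factors are drawn from the coordinate $\pi$-bases. For each $i\in I$ fix, as given, a $\pi$-base $\BB_i$ for $Z_i$ consisting of clopen sets together with homeomorphisms $\varphi^i_U:Z_i\longrightarrow U$ satisfying $\varphi^i_U[X_i]=X_i\cap U$ for $U\in\BB_i$. Let $\BB$ consist of all sets of the form
$$
W=\prod_{i\in F}U_i\times\prod_{i\in I\setminus F}Z_i,
$$
where $F\subseteq I$ is finite and $U_i\in\BB_i$ for each $i\in F$. Each such $W$ is clopen in $\prod_{i\in I}Z_i$, since it is the intersection of finitely many preimages of clopen sets under the (continuous, open) projections.

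Next I would check that $\BB$ is a $\pi$-base. Given a non-empty open $O\subseteq\prod_{i\in I}Z_i$, pick a basic open box $\prod_{i\in F}O_i\times\prod_{i\in I\setminus F}Z_i\subseteq O$ with $F$ finite and each $O_i$ a non-empty open subset of $Z_i$; since $\BB_i$ is a $\pi$-base for $Z_i$, choose $U_i\in\BB_i$ with $U_i\subseteq O_i$, and the resulting box lies in $\BB$ and is contained in $O$.

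Then, for $W$ as above, define $\varphi_W:\prod_{i\in I}Z_i\longrightarrow W$ coordinatewise by $\varphi_W(z)(i)=\varphi^i_{U_i}(z(i))$ for $i\in F$ and $\varphi_W(z)(i)=z(i)$ for $i\in I\setminus F$; in other words $\varphi_W=\prod_{i\in F}\varphi^i_{U_i}\times\prod_{i\in I\setminus F}\id_{Z_i}$. Being a product of homeomorphisms, $\varphi_W$ is a homeomorphism onto $W$. Finally, since $\varphi^i_{U_i}[X_i]=X_i\cap U_i$ for $i\in F$ and the identity preserves $X_i$ for $i\in I\setminus F$, one computes
$$
\varphi_W\Big[\prod_{i\in I}X_i\Big]=\prod_{i\in F}(X_i\cap U_i)\times\prod_{i\in I\setminus F}X_i=\Big(\prod_{i\in I}X_i\Big)\cap W,
$$
which is exactly the required condition.

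None of these steps presents a genuine difficulty; the only points demanding a little care are confirming that the finite-support boxes are genuinely clopen in the (possibly infinite) product, and that $\varphi_W$ is a homeomorphism onto $W$ rather than merely a continuous bijection. Both follow from the finite support of the boxes together with the fact that a product of homeomorphisms is a homeomorphism, so the main obstacle is purely bookkeeping rather than mathematical.
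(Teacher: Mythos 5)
Your proof is correct, and it is the standard finite-support-box argument that the paper has in mind when it declares the proof ``straightforward'' and leaves it to the reader. Nothing to add.
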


\begin{proposition}\label{prodpreservecc}
Let $I$ be a countable set. If $X_i$ is countably controlled for each $i\in I$ then $\prod_{i\in I}X_i$ is countably controlled.
\end{proposition}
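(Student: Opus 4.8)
The plan is to unwind the definition of ``countably controlled'' coordinatewise. Write $X=\prod_{i\in I}X_i$ and let $\pi_i:X\longrightarrow X_i$ denote the projection onto the $i$-th factor. Given an arbitrary countable $D\subseteq X$, I would first observe that $\pi_i[D]$ is a countable subset of $X_i$ for each $i\in I$. Since each $X_i$ is countably controlled, this immediately yields a Polish subspace $G_i$ of $X_i$ with $\pi_i[D]\subseteq G_i\subseteq X_i$.

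Next I would set $G=\prod_{i\in I}G_i$ and claim that $G$ is the desired Polish subspace of $X$ containing $D$. That $G$ is Polish follows from the fact that $I$ is countable and each $G_i$ is Polish, via the proposition on countable products of Polish spaces stated in Section 2; here one uses that the product topology on $\prod_{i\in I}G_i$ agrees with the subspace topology it inherits as a subset of $X=\prod_{i\in I}X_i$, so that $G$ really is a Polish subspace of $X$. The inclusion $G\subseteq X$ is clear since $G_i\subseteq X_i$ for each $i$. For the inclusion $D\subseteq G$, take any $x\in D$: for every $i\in I$ we have $x(i)=\pi_i(x)\in\pi_i[D]\subseteq G_i$, whence $x\in\prod_{i\in I}G_i=G$.

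There is no real obstacle in this argument; the only point that warrants a word of care is the compatibility of the product and subspace topologies mentioned above, which is a standard fact about products indexed by a countable set. Since $D$ was an arbitrary countable subset of $X$, this shows that $X=\prod_{i\in I}X_i$ is countably controlled, as required.
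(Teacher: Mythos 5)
Your argument is correct and is exactly the ``straightforward proof'' that the paper leaves to the reader: project $D$ to each factor, use countable controlledness coordinatewise to get Polish $G_i\supseteq\pi_i[D]$, and take $G=\prod_{i\in I}G_i$, which is Polish as a countable product and contains $D$. The side remark about the product topology on $\prod_{i\in I}G_i$ agreeing with the subspace topology inherited from $\prod_{i\in I}X_i$ is the right point to flag, and it holds; nothing further is needed.
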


\begin{proposition}\label{existslambda}
There exists a $\lambda'$-set of size $\omega_1$ which is h-homogeneously embedded in $2^\omega$.
\end{proposition}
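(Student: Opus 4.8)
The plan is to begin with a $\lambda'$-set $Y$ of size $\omega_1$, whose existence is provided by Theorem \ref{zfclambda}, and to replace it by its saturation $X$ under the natural ``prefix'' operations on $2^\omega$. This saturation will automatically be h-homogeneously embedded, and the real point will be to check that it remains a $\lambda'$-set of size $\omega_1$. For $s\in 2^{<\omega}$, let $[s]=\{x\in 2^\omega:s\subseteq x\}$ and let $\varphi_s:2^\omega\longrightarrow[s]$ be the homeomorphism given by $\varphi_s(x)=s^\frown x$. The collection $\BB=\{[s]:s\in 2^{<\omega}\}$ is a base, hence a $\pi$-base, for $2^\omega$ consisting of clopen sets. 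A straightforward verification shows that $\varphi_s[X]=X\cap[s]$ holds for every $s$ if and only if $X$ is a union of classes of the tail-equivalence relation on $2^\omega$, where $x$ and $y$ are tail-equivalent when $\sigma^m(x)=\sigma^n(y)$ for some $m,n\in\omega$ (here $\sigma$ denotes the shift).

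Accordingly, I would let $X=\bigcup_{s\in 2^{<\omega}}\bigcup_{n\in\omega}\varphi_s[\sigma^n[Y]]$ be the tail-saturation of $Y$. By construction $\varphi_s[X]=X\cap[s]$ for every $s$, so $X$ is h-homogeneously embedded in $2^\omega$ through $\BB$ and the maps $\varphi_s$. Since each tail-equivalence class is countable and $|Y|=\omega_1$, we have $|X|=\omega_1$.

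The remaining, and main, task is to show that $X$ is a $\lambda'$-set; the natural tool is Lemma \ref{unionlambda}, so it suffices to exhibit $X$ as a countable union of $\lambda'$-sets. The key auxiliary statement is a preservation lemma: if $U,U'$ are clopen subsets of $2^\omega$, $\varphi:U\longrightarrow U'$ is a homeomorphism, and $Z\subseteq U$ is a $\lambda'$-set, then $\varphi[Z]$ is a $\lambda'$-set. To prove it, given a countable $D\subseteq 2^\omega$ I would split $D$ along the clopen set $U'$: the portion $D\cap U'$ is transported back through $\varphi$ and absorbed using that $Z$ is a $\lambda'$-set and that a homeomorphic image of a $\lambda$-set is a $\lambda$-set, while the portion $D\setminus U'$ is handled using that a countable space is a $\lambda$-set and that a clopen-separated finite union of $\lambda$-sets is a $\lambda$-set. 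Granting this, each $Y\cap[t]$ is a $\lambda'$-set (a subspace of a $\lambda'$-set is a $\lambda'$-set); for fixed $n$ the decomposition $\sigma^n[Y]=\bigcup_{|t|=n}(\sigma^n\upharpoonright[t])[Y\cap[t]]$ presents $\sigma^n[Y]$ as a finite union of homeomorphic images of $\lambda'$-sets (each $\sigma^n\upharpoonright[t]:[t]\longrightarrow 2^\omega$ being a homeomorphism), hence a $\lambda'$-set by the preservation lemma and Lemma \ref{unionlambda}; applying the preservation lemma to $\varphi_s$ shows each $\varphi_s[\sigma^n[Y]]$ is a $\lambda'$-set; and a last appeal to Lemma \ref{unionlambda} over the countably many pairs $(s,n)$ yields that $X$ is a $\lambda'$-set.

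The main obstacle is the preservation lemma together with the unavoidable use of the shift: tail-invariance cannot be achieved with global homeomorphisms of $2^\omega$ alone, so one is forced to work with the non-injective maps $\sigma^n$, which is precisely why the decomposition into homeomorphisms on the clopen pieces $[t]$ is essential. The subsidiary facts invoked above --- that a subspace of a $\lambda$-set is a $\lambda$-set, that a countable metrizable space is a $\lambda$-set, and that a clopen-separated finite union of $\lambda$-sets is a $\lambda$-set --- are routine: each follows from the definition of a $\lambda$-set together with the observation that a countable subset of a $T_1$ space has $\Fs$ complement.
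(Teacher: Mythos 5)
Your proof is correct and follows essentially the same route as the paper's: both saturate a Sierpi\'nski $\lambda'$-set of size $\omega_1$ under a countable family of homeomorphisms attached to a clopen $\pi$-base and invoke Lemma \ref{unionlambda} to see that the saturation is still a $\lambda'$-set of size $\omega_1$. The only differences are cosmetic --- the paper iterates the closure under an abstract family $\{\varphi_U:U\in\BB\}$ in $\omega$ stages while you compute it in one step for the concrete prefix maps via tail-equivalence, and you spell out the preservation of $\lambda'$-ness under homeomorphisms between clopen sets, which the paper leaves implicit.
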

\begin{proof}
Fix a (countable) $\pi$-base $\BB$ for $2^\omega$ consisting of clopen sets and homeomorphisms $\varphi_U:2^\omega\longrightarrow U$ for $U\in\BB$. Let $X_0$ be a $\lambda'$-set of size $\omega_1$ (whose existence is guaranteed by Theorem \ref{zfclambda}) and, given $X_n$ for some $n\in\omega$, define
$$
X_{n+1}=\bigcup\{\varphi_U[X_n]:U\in\BB\}\cup\bigcup\{\varphi_U^{-1}[X_n\cap U]:U\in\BB\}.
$$
In the end, let $X=\bigcup_{n\in\omega}X_n$. Using induction and Lemma \ref{unionlambda}, it is easy to see that each $X_n$ is a $\lambda'$-set of size $\omega_1$. Therefore, $X$ is a $\lambda'$-set of size $\omega_1$. Finally, the construction of $X$ ensures that $\varphi_U[X]=X\cap U$ for every $U\in\BB$.
\end{proof}

\begin{theorem}\label{mainexample}
There exists a subspace $X$ of $2^\omega$ with the following properties.
\begin{itemize}
\item $X$ is not Polish.
\item $X^\omega$ is $\CDH$.
\item If $\MA+\neg\CH+\omega_1=\omega_1^\Ell$ holds then $X$ is analytic. 
\end{itemize}
\end{theorem}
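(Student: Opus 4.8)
The plan is to build the space $X$ as (essentially) the $\omega$-th power of the complement of a carefully chosen $\lambda'$-set, so that the three required properties can be verified using the machinery already developed.

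\medskip

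\textbf{Construction.} First I would take the $\lambda'$-set $L$ of size $\omega_1$ that is h-homogeneously embedded in $2^\omega$, whose existence is given by Proposition \ref{existslambda}. Set $Y = 2^\omega \setminus L$, and let $X = Y^\omega$ (viewed as a subspace of $(2^\omega)^\omega \approx 2^\omega$). The intuition is that a $\lambda'$-set is ``small'' in a descriptive-set-theoretic sense, so its complement $Y$ should be large enough to contain all the Polish subspaces we need (giving countable controlledness), while $L$ itself being non-Polish (it cannot contain a copy of $2^\omega$, since no $\lambda$-set can) should force $Y$, and hence $X$, to be non-Polish.

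\medskip

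\textbf{Verifying the three properties.} For the \emph{$\CDH$} clause, I would apply Theorem \ref{main}: it suffices to show $X = Y^\omega$ is h-homogeneously embedded in $(2^\omega)^\omega$ and countably controlled. The first follows from Proposition \ref{prodpreservehhomemb}, once I check that $Y$ is itself h-homogeneously embedded in $2^\omega$ (the $\pi$-base and maps $\varphi_U$ witnessing h-homogeneous embedding of $L$ should also witness it for the complement $Y$, since $\varphi_U[L]=L\cap U$ is equivalent to $\varphi_U[Y]=Y\cap U$ after taking complements within the clopen set $U$). The countable controlledness of $X$ follows from Proposition \ref{prodpreservecc}, reducing to: \emph{$Y$ is countably controlled}. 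This is the crux. Given a countable $D \subseteq Y$, I must find a Polish $G$ with $D \subseteq G \subseteq Y$; equivalently, a $\Gd$ set $G$ with $D \subseteq G$ and $G \cap L = \varnothing$. Since $L$ is a $\lambda'$-set and $D$ is a countable subset of $2^\omega$, the set $L \cup D$ is a $\lambda$-set, so $D$ is $\Gd$ \emph{in} $L\cup D$; from this I expect to extract a $\Gd$ subset of $2^\omega$ separating $D$ from $L$, which is exactly the required $G$. For the \emph{non-Polish} clause, if $X=Y^\omega$ were Polish then $Y$ would be Polish (it embeds as a closed factor), hence $\Gd$ in $2^\omega$, making $L$ an $\Fs$ set; but an uncountable $\Fs$ subset of $2^\omega$ contains a copy of $2^\omega$, contradicting that $L$ (being a $\lambda'$-set) cannot. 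For the \emph{analyticity} clause, I would invoke the set-theoretic hypothesis $\MA + \neg\CH + \omega_1 = \omega_1^\Ell$: under this hypothesis the $\lambda'$-set produced from the Hausdorff-gap construction can be arranged to have an analytic (indeed $\Sigma^1_1$) complement of size $\omega_1$ coded in $\Ell$, so that $Y=2^\omega\setminus L$ is coanalytic and $X=Y^\omega$ is analytic; the condition $\omega_1=\omega_1^\Ell$ is what lets one code an $\omega_1$-sized set by a $\Sigma^1_2$ (hence, with $\MA+\neg\CH$, analytic) formula.

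\medskip

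\textbf{Main obstacle.} The technical heart is the countable-controlledness argument, i.e.\ turning the $\lambda$-set property ``$D$ is $\Gd$ in $L \cup D$'' into a genuine $\Gd$ subset of $2^\omega$ that is disjoint from all of $L$, not merely from $D$. A relative-$\Gd$ set need not extend to an ambient $\Gd$ set avoiding $L$, so I would need to exploit that $L\cup D$ is a $\lambda$-set \emph{for every} countable $D$ and work with the closure of $D$ or a countable dense rationals-style skeleton to manufacture the separating $\Gd$. The analyticity clause is the other delicate point, since it is not purely topological but requires importing the precise descriptive complexity of Sierpi\'nski's construction under the stated forcing axiom plus the constructibility hypothesis on $\omega_1$; I would lean on Theorem \ref{zfclambda} and its cited proof to pin down the exact pointclass.
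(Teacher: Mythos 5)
Your construction is essentially the paper's: the paper takes $X=2^\omega\setminus Y$ for a $\lambda'$-set $Y$ of size $\omega_1$ that is h-homogeneously embedded in $2^\omega$ (Proposition \ref{existslambda}) and verifies the three clauses exactly as you outline; your extra passage to the $\omega$-th power before naming $X$ is harmless, since $(Y^\omega)^\omega\approx Y^\omega$. Two loose ends you flag are in fact immediate. First, the ``main obstacle'' is no obstacle: given a countable $D\subseteq 2^\omega\setminus L$, since $L\cup D$ is a $\lambda$-set there is a $\Gd$ subset $G$ of $2^\omega$ with $G\cap(L\cup D)=D$ (every relative $\Gd$ is the trace of an ambient $\Gd$), and this $G$ already satisfies $D\subseteq G\subseteq 2^\omega\setminus L$ because $G\cap L\subseteq D\cap L=\varnothing$; no further extension or separation argument is needed, which is why the paper can say the definition of $\lambda'$-set ``immediately implies'' countable controlledness. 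Second, the analyticity clause should be routed through the Martin--Solovay theorem (Theorem \ref{martinsolovay}): under $\MA+\neg\CH+\omega_1=\omega_1^\Ell$ every subspace of $2^\omega$ of size $\omega_1$ is coanalytic, so $L$ is coanalytic, $2^\omega\setminus L$ is analytic, and its $\omega$-th power is analytic. Your sketch of this point is garbled --- at one moment it calls $2^\omega\setminus L$ coanalytic, and it attributes the descriptive complexity to the Hausdorff-gap construction of Theorem \ref{zfclambda} rather than to the Martin--Solovay coding --- but the intended argument is the correct one.
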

\begin{proof}
By Proposition \ref{existslambda}, we can fix a $\lambda'$-set $Y$ of size $\omega_1$ which is h-homogeneously embedded in $2^\omega$. Let $X=2^\omega\setminus Y$. By Theorem \ref{martinsolovay}, if $\MA+\neg\CH+\omega_1=\omega_1^\Ell$ holds then $X$ is analytic. It is straightforward to verify that $X$ is is h-homogeneously embedded in $2^\omega$. By Proposition \ref{prodpreservehhomemb}, it follows that $X^\omega$ is h-homogeneously embedded in $(2^\omega)^\omega\approx 2^\omega$. Furthermore, the definition of $\lambda'$-set immediately implies that $X$ is countably controlled. By Proposition \ref{prodpreservecc}, it follows that $X^\omega$ is countably controlled. In conclusion, $X^\omega$ is $\CDH$ by Theorem \ref{main}.

Assume, in order to get a contradiction, that $X$ is Polish. This means that $X$ is a $\Gd$ subspace of $2^\omega$, so $Y$ is an $\Fs$. Since $Y$ is uncountable, it follows that $Y$ contains a copy of $2^\omega$, which contradicts the fact that $Y$ is a $\lambda$-set.
\end{proof}

Observe that, by the remark that follows Theorem \ref{borelcdhpower}, the analytic counterexample given by Theorem \ref{mainexample} could not have been constructed in $\ZFC$.

The following is a classical result (see \cite[Theorem 23.3]{millerd}). For a new, topological proof, based on a result of Baldwin and Beaudoin, see \cite[Theorem 8.1]{medinizdomskyy}.
\begin{theorem}[Martin, Solovay]\label{martinsolovay}
Assume $\MA + \neg\CH + \omega_1=\omega_1^\Ell$. Then every subspace of $2^\omega$ of size $\omega_1$ is coanalytic.
\end{theorem}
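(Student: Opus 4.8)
The plan is to prove the statement directly: given $A\subseteq 2^\omega$ with $|A|=\omega_1$, produce a $\Pi^1_1$ (i.e.\ coanalytic) presentation of $A$. Since a set is coanalytic exactly when its complement is analytic, and since the cleanest way to certify a $\Pi^1_1$ set is as a Borel preimage of the set $\mathsf{WF}$ of wellfounded trees (equivalently of the $\Pi^1_1$-complete set $\mathsf{WO}$ of codes for wellorderings of $\omega$), the target is a Borel map $x\mapsto T_x$ with $x\in A\iff T_x\in\mathsf{WF}$. The two hypotheses feed two different halves of the construction: $\omega_1=\omega_1^{\Ell}$ supplies definable \emph{codes} for the ordinals below $\omega_1$, while $\MA+\neg\CH$ supplies a single real into which the whole $\omega_1$-enumeration of $A$ can be compressed.

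First I would fix an injective enumeration $A=\{x_\alpha:\alpha<\omega_1\}$ and exploit $\omega_1=\omega_1^{\Ell}$. By condensation, a code for every countable ordinal appears in some $\Ell_\alpha$ with $\alpha<\omega_1^{\Ell}=\omega_1$, and there is a $\Sigma^1_2$ good wellordering $<_{\Ell}$ of $2^\omega\cap\Ell$. Using $<_{\Ell}$ I would attach to each $\alpha$ its canonical constructible code $w_\alpha\in\mathsf{WO}$ (say the $<_{\Ell}$-least code of order type $\alpha$), so that $\alpha\mapsto w_\alpha$ is a definable injection and the canonical code set $W=\{w_\alpha:\alpha<\omega_1\}$ is a \emph{thin} set meeting every $\mathsf{WO}$-rank exactly once. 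The role of $\Ell$ is that $W$ then admits a presentation of low enough complexity that a search for ``the canonical code of some $\alpha$ with $x=x_\alpha$'' can be organised as a wellfounded tree; the boundedness theorem for the $\Pi^1_1$-norm on $\mathsf{WO}$ (an analytic subset of $\mathsf{WO}$ has bounded rank) is what will keep this search from being a genuine real quantifier.

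Next I would use $\MA+\neg\CH$, which gives $\pppp=\cccc>\omega_1$, to compress $A$ into one real. Fix a Borel almost disjoint family $\{a_s:s\in 2^\omega\}$ and apply Solovay's almost disjoint coding (the coding poset is $\sigma$-centered, so a filter meeting the $\omega_1$ relevant dense sets exists by $\MAsigma$) to obtain a real $d$ such that the graph $\{(w_\alpha,x_\alpha):\alpha<\omega_1\}$ is captured by a \emph{Borel-in-$d$} relation $R_d$; that is, for $w\in W$ one has $R_d(w,x)\iff x=x_{|w|}$. Because $|A|=\omega_1<\cccc$, this coding is possible for the size-$\omega_1$ graph, while conversely $A$ cannot be Borel (under $\neg\CH$ an uncountable Borel set has size $\cccc$), which is exactly why the target class is the coanalytic sets and not the Borel ones.

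Finally I would assemble the Borel reduction. The first-pass description $x\in A\iff\exists w\,(w\in W\wedge R_d(w,x))$ only yields $\Sigma^1_2$, because the existential real quantifier over the (at best $\Sigma^1_2$) code set $W$ is expensive; the crux of the whole argument is to replace this quantifier by a \emph{wellfoundedness} condition. The plan is to define $T_x$ to be the tree of finite attempts to build, from $d$ and the constructible wellordering, a descending sequence of $\mathsf{WO}$-ranks certifying that \emph{no} canonical code $w$ with $R_d(w,x)$ is ever reached; boundedness guarantees that such a certificate is an honest wellfounded object precisely when $x\notin A$, so that $x\in A\iff T_x\in\mathsf{WF}$ and $A$ is $\Pi^1_1$. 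I expect this last step --- verifying that the search tree is Borel in $x$ and that its wellfoundedness matches membership in $A$, i.e.\ that the $\omega_1$-quantifier genuinely collapses to $\Pi^1_1$ rather than $\Sigma^1_2$ --- to be the main obstacle, and it is precisely here that the two hypotheses must be used in tandem: $\omega_1=\omega_1^{\Ell}$ to make the codes definable and thin, and $\MA+\neg\CH$ both to produce $d$ and to ensure $\omega_1<\cccc$. An alternative organisation first records $A\in\Sigma^1_2$ from the coding above and then invokes the $\MA+\neg\CH$ regularity of $\Sigma^1_2$ sets to descend to $\Pi^1_1$; either way, the descent in complexity is the heart of the matter.
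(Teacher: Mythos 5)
First, a remark on the comparison: the paper does not actually prove Theorem \ref{martinsolovay}; it cites it as classical (Miller's \emph{Descriptive set theory and forcing}, Theorem 23.3) and points to an alternative topological proof via Baldwin--Beaudoin in the Medini--Zdomskyy paper. So your proposal can only be judged against the standard argument.

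Your proposal correctly assembles the two standard ingredients --- a thin set $W\subseteq\mathsf{WO}$ of unique constructible codes from $\omega_1=\omega_1^{\Ell}$, and an almost disjoint coding real $d$ from $\MAsigma$ making $x_{|w|}$ uniformly arithmetic in $(w,d)$ --- and correctly diagnoses that the resulting presentation $x\in A\iff\exists w\,(w\in W\wedge R_d(w,x))$ is only $\Sigma^1_2$. But the step you yourself flag as ``the heart of the matter,'' the descent from $\Sigma^1_2$ to $\Pi^1_1$, is not actually carried out, and the mechanism you sketch would not work. Boundedness says that an \emph{analytic} subset of $\mathsf{WO}$ has bounded rank; the set $\{w\in W: R_d(w,x)\}$ is not analytic (it sits inside the $\Pi^1_1$ set $W$), and a ``wellfounded tree of attempts to certify that no canonical code is reached'' is not a coherent object: wellfoundedness certifies the \emph{absence} of infinite branches, whereas you need a positive, $\Pi^1_1$-checkable witness for \emph{membership} in $A$. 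Your fallback --- pass to $\Sigma^1_2$ and then invoke ``$\MA+\neg\CH$ regularity of $\Sigma^1_2$ sets'' --- is also a non-starter: the regularity properties of $\Sigma^1_2$ sets under $\MA+\neg\CH$ (measurability, Baire property, being unions of $\omega_1$ Borel sets) give no descent of a $\Sigma^1_2$ set to $\Pi^1_1$.

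The missing idea is to run the coding in the \emph{opposite} direction as well, so that the existential real quantifier disappears entirely. Use a second almost disjoint coding (again $\sigma$-centered, so available under $\MA+\neg\CH$ since $|A|=\omega_1<\cccc$) to produce a real $z$ such that the canonical code $w_\alpha$ is uniformly arithmetic in $x_\alpha\oplus z$; this is possible because the $x_\alpha$ are distinct, so the relevant almost disjoint subfamily indexed by pairs $(x_\alpha,n)$ is genuinely almost disjoint. Writing $F$ for this Borel decoding and $G$ for the decoding of $x_{|w|}$ from $(w,d)$, one then has $x\in A$ if and only if $F(x,z)\in W$ and $G(F(x,z),d)=x$: the first conjunct is the Borel preimage of a $\Pi^1_1$ set, the second is Borel, and for $x\notin A$ the test fails because either $F(x,z)\notin W$ or $F(x,z)=w_\alpha$ for some $\alpha$ and then $G(w_\alpha,d)=x_\alpha\neq x$. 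Separately, you assert but do not establish that $W$ is $\Pi^1_1$ (rather than merely $\Sigma^1_2$ via the good wellordering); this requires the condensation argument expressing ``$w$ is the $<_{\Ell}$-least code of its order type'' by quantifying over countable wellfounded models of $V=\Ell$, which is exactly where $\omega_1=\omega_1^{\Ell}$ is used.
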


\section{A sufficient condition}

The main result of this section is Theorem \ref{sufficient}, which shows that being countably controlled is by itself a sufficient condition on a zero-dimensional space $X$ for the countable dense homogeneity of $X^\omega$. It is easy to realize that Theorem \ref{mainexample} could have been proved using Corollary \ref{everylambda}. However, since the proof of Theorem \ref{sufficient} relies on deep results such as \cite[Theorem 1]{dowpearl} and Theorem \ref{hhompower}, we preferred to make the rest of the paper more self-contained.

The following result is inspired by \cite[Proposition 24]{medinip}, where the proof of the equivalence $(\ref{isolated})\leftrightarrow (\ref{hhom})$ first appeared. Recall that a space $X$ is \emph{h-homogeneous} (or \emph{strongly homogeneous}) if $C\approx X$ for every non-empty clopen subspace $C$ of $X$.
\begin{proposition}\label{strengthenhhom}
Let $X$ be zero-dimensional space such that $|X|\geq 2$. Then the following are equivalent.
\begin{enumerate}
\item\label{isolated} $X^\omega\approx Y^\omega$ for some space $Y$ with at least one isolated point.
\item\label{hhomemb} $X^\omega$ can be h-homogeneously embedded in $2^\omega$.
\item\label{hhom} $X^\omega$ is h-homogeneous.
\end{enumerate}
\end{proposition}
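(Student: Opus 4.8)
The plan is to prove the three conditions equivalent by establishing the cyclic chain of implications $(\ref{isolated})\rightarrow(\ref{hhomemb})\rightarrow(\ref{hhom})\rightarrow(\ref{isolated})$, of which the third is the cheapest and the first is the one requiring real work. For the easy implication $(\ref{hhom})\rightarrow(\ref{isolated})$, I would simply take $Y=X^\omega$ itself: if $X^\omega$ is h-homogeneous then it is homeomorphic to all of its non-empty clopen subspaces, and it only remains to produce \emph{some} homeomorphic copy with an isolated point. Since $|X|\geq 2$ and $X$ is zero-dimensional, $X^\omega$ has non-trivial clopen subsets, and one can take the countable discrete space $\omega$ (which has isolated points) and observe that $\omega\times X^\omega\approx X^\omega$ by h-homogeneity together with the standard fact that a countable clopen partition of $X^\omega$ into copies of $X^\omega$ witnesses $(\omega\times X^\omega)\approx X^\omega$; the factor $\omega$ supplies the isolated point in the representation $Y=\omega\times X^\omega$, whose $\omega$-th power absorbs back to $X^\omega$.

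For $(\ref{hhomemb})\rightarrow(\ref{hhom})$, suppose $X^\omega$ is h-homogeneously embedded in $2^\omega$, so there is a $\pi$-base $\BB$ of clopen sets and homeomorphisms $\varphi_U:2^\omega\to U$ with $\varphi_U[X^\omega]=X^\omega\cap U$. The goal is to show any two non-empty clopen subspaces $C,C'$ of $X^\omega$ are homeomorphic. Here I would exploit the zero-dimensionality of $X^\omega$ to write each of $C$ and $C'$ as a clopen subset of $X^\omega$ lying inside basic clopen sets of $2^\omega$, then use the maps $\varphi_U$ to transport the relative structure: the key point is that h-homogeneous embedding lets us realize each clopen piece of $X^\omega$ as a homeomorphic copy of $X^\omega$ restricted to a member of $\BB$, and a back-and-forth/absorption argument (partitioning both $C$ and $C'$ into countably many such pieces and matching them) yields $C\approx C'$. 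The absorption identity $X^\omega\approx (X^\omega)^\omega$ is what makes the matching possible without residue.

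The main obstacle, and the step I expect to absorb most of the effort, is $(\ref{isolated})\rightarrow(\ref{hhomemb})$. Assuming $X^\omega\approx Y^\omega$ where $Y$ has an isolated point $p$, I would first note that $Y$ being zero-dimensional (inherited from $X^\omega\approx Y^\omega$ being zero-dimensional, since $X$ is) and having an isolated point means $Y\approx \{p\}\oplus Y'$ for a clopen complement, whence $Y^\omega$ contains a clopen copy of $Y^\omega$ in a controlled way. The strategy is to use the isolated point to manufacture, inside $Y^\omega$, a clopen $\pi$-base whose members are each homeomorphic to the whole space via the product structure: fixing coordinates at the isolated point $p$ produces clopen sets of the form $\{p\}^F\times Y^{\omega\setminus F}$ for finite $F$, each canonically homeomorphic to $Y^\omega$ itself, and these form a $\pi$-base. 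Transporting this through the homeomorphism $X^\omega\approx Y^\omega$ and then embedding the ambient space into $2^\omega$ (using zero-dimensionality and Proposition~\ref{prodpreservehhomemb}-style reasoning that $\omega$-th powers of zero-dimensional spaces embed nicely) should yield the desired h-homogeneous embedding. The delicate part is verifying that the coordinate-projection homeomorphisms genuinely respect the copy of $X^\omega$ and that the $\pi$-base condition (every non-empty open set contains a member) holds; this is where I would expect to spend care matching the combinatorics of finite supports with the clopen base of $2^\omega$.
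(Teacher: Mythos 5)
There are genuine gaps, and one step is actually wrong. In your implication $(\ref{hhom})\rightarrow(\ref{isolated})$ you propose $Y=\omega\times X^\omega$ and claim the factor $\omega$ ``supplies the isolated point''. It does not: since $|X|\geq 2$, the space $X^\omega$ is crowded, hence $\omega\times X^\omega$ has no isolated points at all (a point $(n,x)$ is isolated only if $x$ is isolated in $X^\omega$). Moreover the claimed identification $\omega\times X^\omega\approx X^\omega$ fails in the compact case: for $X=2$ one has $X^\omega\approx 2^\omega$, which is compact and h-homogeneous, while $\omega\times 2^\omega$ is not compact; h-homogeneity only gives that \emph{clopen subspaces} are copies of the whole space, not that a countable discrete sum of copies is again a copy. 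The paper instead takes $Y=X\oplus 1$ (so the added point really is isolated in $Y$) and proves $X^\omega\approx(X\oplus 1)^\omega$ by showing both are homeomorphic to $C=(X\oplus 1)^\omega\times X^\omega$, using the identity $X^\omega\approx(X\oplus 1)\times X^\omega$ (which follows from splitting $X^\omega$ into two clopen pieces) and then taking $\omega$-th powers. Some absorption argument of this Schr\"oder--Bernstein type is needed; your shortcut does not produce a space with an isolated point.

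The other two implications are sketched in the right direction but each is missing its essential tool. For $(\ref{isolated})\rightarrow(\ref{hhomemb})$, the clopen sets obtained by freezing finitely many coordinates at the isolated point $p$ form only a \emph{local base at the single point} $\langle p,p,\ldots\rangle$; they are nowhere near a $\pi$-base. To spread them out you need, for each point $x$ of a countable dense subset of $Y^\omega$, a homeomorphism of the ambient compactification $K^\omega\approx 2^\omega$ (where $K=\cl(Y)$) that maps $Y^\omega$ onto itself and sends $\langle p,p,\ldots\rangle$ to $x$. This is exactly where the paper invokes the Dow--Pearl theorem on homogeneity of $\omega$-th powers; without it (or some substitute) the $\pi$-base condition you flag as ``delicate'' simply cannot be verified. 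For $(\ref{hhomemb})\rightarrow(\ref{hhom})$, your ``back-and-forth/absorption'' matching of clopen pieces is essentially the content of Terada's theorem, which requires non-pseudocompactness to run the infinite absorption; the paper handles the compact case separately (where $X^\omega\approx 2^\omega$) and cites Terada for the rest. As written, your argument does not explain how the absorption terminates without residue, which is precisely the nontrivial point.
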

\begin{proof}
In order to prove the implication $(\ref{isolated})\rightarrow (\ref{hhomemb})$, assume that $X^\omega\approx Y^\omega$, where $Y$ is a space with at least one isolated point. Assume without loss of generality that $Y$ is a subspace of $2^\omega$, and let $z\in 2^\omega$ be an isolated point of $Y$. Let $K=\cl(Y)$, where the closure is taken in $2^\omega$, and notice that $z$ remains isolated in $K$. Also notice that $K^\omega$ is crowded because $|X|\geq 2$ and $Y^\omega\approx X^\omega$. It follows that $K^\omega\approx 2^\omega$, so it will be enough to show that $Y^\omega$ is h-homogeneously embedded in $K^\omega$.

Let $[\omega]^{<\omega}=\{F\subseteq\omega:F\text{ is finite}\}$. Given any $F\in [\omega]^{<\omega}$, define
$$
U_F=\{x\in K^\omega: x(n)=z\text{ for all }n\in F\},
$$
and notice that each $U_F$ is a clopen subset of $K^\omega$. Furthermore, it is clear that $\{U_F:F\in [\omega]^{<\omega}\}$ is a local base for $K^\omega$ at $\langle z,z,\ldots\rangle$. By \cite[Theorem 1]{dowpearl}, given any $x\in Y^\omega$, there exists a homeomorphism $h_x:K^\omega\longrightarrow K^\omega$ such that $h_x[Y^\omega]=Y^\omega$ and $h_x(\langle z,z,\ldots\rangle)=x$. Fix a countable dense subset $D$ of $Y^\omega$. It is easy to realize that the collection
$$
\BB=\{h_x[U_F]:x\in D, F\in [\omega]^{<\omega}\}
$$
is a countable $\pi$-base for $K^\omega$ consisting of clopen sets.

For every $F\in [\omega]^{<\omega}$, fix a bijection $\pi_F:\omega\setminus F\longrightarrow\omega$, then define $h_F:K^\omega\longrightarrow U_F$ by setting
$$
\left.
\begin{array}{lcl}
& & h_F(x)(n)= \left\{
\begin{array}{ll}
z & \textrm{if }n\in F,\\
x(\pi_F(n)) & \textrm{if }n\in\omega\setminus F
\end{array}
\right.
\end{array}
\right.
$$
for every $x\in K^\omega$ and $n\in\omega$. One can easily check that each $h_F$ is a homeomorphism such that $h_F[Y^\omega]=Y^\omega\cap U_F$. Given any $U\in\BB$, where $U=h_x[U_F]$ for some $x\in D$ and $F\in [\omega]^{<\omega}$, let $\varphi_U=h_x\circ h_F$. It is straightforward to verify that each $\varphi_U:K^\omega\longrightarrow U$ is a homeomorphism such that $\varphi_U[Y^\omega]=Y^\omega\cap U$.

In order to prove the implication $(\ref{hhomemb})\rightarrow (\ref{hhom})$, assume that $X^\omega$ is h-homogeneously embedded in $2^\omega$. In particular, $X^\omega$ has a $\pi$-base consisting of clopen sets that are homeomorphic to $X^\omega$. If $X^\omega$ is compact then $X^\omega\approx 2^\omega$, which is well-known to be h-homogeneous. On the other hand, if $X^\omega$ is non-compact then it is non-pseudocompact (see \cite[Proposition 3.10.21 and Theorem 4.1.17]{engelking}), in which case the desired result follows from a theorem of Terada (see \cite[Theorem 2.4]{terada} or \cite[Theorem 2 and Appendix A]{medinip}).

In order to prove the implication $(\ref{hhom})\rightarrow (\ref{isolated})$, assume that $X^\omega$ is h-homogeneous. It will be enough to show that $X^\omega$ and $(X\oplus 1)^\omega$ are both homeomorphic to the space $C=(X\oplus 1)^\omega\times X^\omega$, where $X\oplus 1$ denotes the space obtained by adding one isolated point to $X$. Notice that $X^\omega$ can be partitioned into two non-empty clopen subsets because $|X|\geq 2$. Therefore 
$$
X^\omega\approx X^\omega\oplus X^\omega\approx (X\times X^\omega )\oplus X^\omega\approx (X\oplus 1)\times X^\omega.
$$
By taking the $\omega$-th power of both sides, one sees that $X^\omega\approx C$. On the other hand, we know that $(X\oplus 1)^\omega$ is h-homogeneous by the implication $(\ref{isolated})\rightarrow (\ref{hhom})$. Since
$$
(X\oplus 1)^\omega\approx (X\oplus 1)\times (X\oplus 1)^\omega\approx (X\times (X\oplus 1)^\omega)\oplus (X\oplus 1)^\omega,
$$
it follows that $(X\oplus 1)^\omega\approx X\times (X\oplus 1)^\omega$. By taking the $\omega$-th power of both sides, one sees that $(X\oplus 1)^\omega\approx C$.
\end{proof}

The following result has been obtained independently by van Engelen (see \cite[Theorem 4.4]{vanengelen}) and Medvedev (see \cite[Corollary 6]{medvedevb}).

\begin{theorem}[van Engelen; Medvedev]\label{hhompower}
Let $X$ be a zero-dimensional space. If $X$ has a dense Polish subspace then $X^\omega$ is h-homogeneous.
\end{theorem}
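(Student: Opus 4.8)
The plan is to reduce the statement to Proposition~\ref{strengthenhhom} and then extract the required clopen self-similarity from the dense Polish subspace by a back-and-forth construction of Knaster--Reichbach type, in the spirit of the proof of Theorem~\ref{main}. First I would dispose of the degenerate cases: if $|X|\leq 1$ then $X^\omega$ is empty or a single point, hence vacuously h-homogeneous, so assume $|X|\geq 2$. By Proposition~\ref{strengthenhhom} it then suffices to verify condition~(\ref{hhomemb}), namely that $X^\omega$ can be h-homogeneously embedded in $2^\omega$. Fix a dense Polish subspace $G$ of $X$; then $G^\omega$ is a dense Polish subspace of the crowded zero-dimensional space $X^\omega$. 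Using the coordinatewise product embedding $X\hookrightarrow 2^\omega$, the closure of $X^\omega$ in $(2^\omega)^\omega$ is a crowded compact zero-dimensional space, hence a copy of $2^\omega$ which I take as the ambient space; in it $X^\omega$ is dense and $G^\omega$ is a dense $\Gd$ set, so that $2^\omega\setminus G^\omega=\bigcup_{\ell}K_\ell$ is a countable union of closed nowhere dense sets, and the clopen boxes of the ambient space correspond to fixing finitely many coordinates.

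When $X$ is itself Polish the conclusion is immediate and bypasses the reduction: $X^\omega$ is then a crowded zero-dimensional Polish space which is either compact or nowhere locally compact (a clopen box is compact only if $X$ is, and otherwise a compact neighbourhood would contain a compact clopen box), so $X^\omega\approx 2^\omega$ or $X^\omega\approx\omega^\omega$ by the classical structure theorems recalled in Section~2, and in either case $X^\omega$ is h-homogeneous. Thus the content of the theorem lies entirely in the non-Polish case, where $G$ is a \emph{proper} dense $\Gd$ subspace of $X$.

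For the general case I would build, for each basic clopen box $U$ of the ambient $2^\omega$, a homeomorphism $\varphi_U\colon 2^\omega\longrightarrow U$ with $\varphi_U[X^\omega]=X^\omega\cap U$, exactly as the definition of an h-homogeneous embedding demands. Each $\varphi_U$ would be produced as the limit of a Knaster--Reichbach system $\langle\langle h_n,\KK_n\rangle:n\in\omega\rangle$ in the sense of Section~6, assembled inductively by Lemma~\ref{KRexists} and glued together by Theorem~\ref{KRsystem}. The reason for routing everything through Corollary~\ref{KRcor} is that its bookkeeping condition $h_n[X^\omega\cap E_n]=X^\omega\cap F_n$ forces the limit homeomorphism to send $X^\omega$ onto $X^\omega\cap U$. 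At the inductive steps the Polish core $G^\omega$ (whose own $\omega$-th power is again $2^\omega$ or $\omega^\omega$, hence freely rearrangeable) is used to absorb the nowhere dense pieces $K_\ell$, while the fact that every coordinate of $X^\omega$ carries the same pair $(X,G)$ supplies the coordinate-shifting homeomorphisms needed to match the finitely many constrained coordinates of $U$ against a full tail.

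The main obstacle is precisely maintaining that the back-and-forth \emph{respects the non-Polish remainder} $X^\omega\setminus G^\omega$, that is, that at every finite stage the partial homeomorphism carries $X^\omega$-points to $X^\omega$-points and remainder to remainder; this is what condition~(9) of Corollary~\ref{KRcor} encodes, and checking that it can be preserved simultaneously with the mesh and refinement conditions~(\ref{mesh})--(\ref{coherencepsi}) is the technical heart of the argument. What makes it feasible is the self-similarity of the remainder across coordinates together with the homogeneity of the Polish core, and this is exactly the mechanism exploited by Medvedev (see \cite[Corollary~6]{medvedevb} and \cite[Theorem~5]{medvedevp}) and, in the form adapted here, in the proof of Theorem~\ref{main}. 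Having produced the family $\{\varphi_U\}$, condition~(\ref{hhomemb}) holds, and Proposition~\ref{strengthenhhom} yields that $X^\omega$ is h-homogeneous.
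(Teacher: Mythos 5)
The paper does not actually prove this theorem: it is quoted from van Engelen \cite[Theorem 4.4]{vanengelen} and Medvedev \cite[Corollary 6]{medvedevb}, and is explicitly flagged as one of the ``deep results'' that Section 9 relies on. Moreover, the paper's own logical flow runs in the opposite direction to yours: Corollary \ref{densePolishcor} obtains the h-homogeneous embedding of $X^\omega$ into $2^\omega$ \emph{from} Theorem \ref{hhompower}, via the implications $(\ref{hhom})\rightarrow(\ref{isolated})\rightarrow(\ref{hhomemb})$ of Proposition \ref{strengthenhhom}. You propose to establish condition $(\ref{hhomemb})$ first and then deduce $(\ref{hhom})$. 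That reduction is logically sound, but it shifts the entire burden of the theorem onto the construction of the homeomorphisms $\varphi_U:2^\omega\longrightarrow U$ with $\varphi_U[X^\omega]=X^\omega\cap U$, and this is exactly the step your sketch does not carry out.

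The gap is concrete. The Knaster--Reichbach machinery of Sections 5--7, as set up in this paper, cannot be invoked here without circularity: in the proof of Theorem \ref{main}, the inductive step matches the closed nowhere dense pieces $S=V\cap K_{\ell(V)}$ and $T=W\cap K_{\ell(W)}$ by transporting them with the maps $\varphi_{U(S)}$ and $\varphi_{U(T)}$, and it is precisely the hypothesis $\varphi_U[X]=X\cap U$ that makes condition (9) of Corollary \ref{KRcor} (your condition $h_n[X^\omega\cap E_n]=X^\omega\cap F_n$) verifiable. Those maps are the very objects you are trying to build. To break the circle you would need an independent mechanism for copying a closed nowhere dense piece of the pair $(\cl(X^\omega), X^\omega)$ into an arbitrary small clopen box while preserving the trace of $X^\omega$. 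Note that for a single constrained coordinate the pairs $(C, X\cap C)$ and $(\cl(X),X)$ need not be homeomorphic as pairs for clopen $C\subseteq\cl(X)$, so coordinatewise maps and tail shifts alone do not suffice; and the paper's coordinate-mixing device (the maps $h_F$ and the Dow--Pearl homeomorphisms $h_x$ in the proof of $(\ref{isolated})\rightarrow(\ref{hhomemb})$) is only available when an isolated point is present, which a crowded $X$ lacks. Your appeal to ``the self-similarity of the remainder across coordinates together with the homogeneity of the Polish core'' is a restatement of what must be proved rather than a proof of it; supplying that mechanism is the actual content of the van Engelen--Medvedev theorem, and it is why the paper cites it instead of proving it.
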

\begin{corollary}\label{densePolishcor}
Let $X$ be a zero-dimensional space such that $|X|\geq 2$. If $X$ has a dense Polish subspace then $X^\omega$ can be h-homogeneously embedded in $2^\omega$.
\end{corollary}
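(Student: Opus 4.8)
The plan is to obtain this statement by feeding the conclusion of Theorem \ref{hhompower} into the equivalence of Proposition \ref{strengthenhhom}. The essential point is that the two hypotheses of the corollary are exactly what the two ingredients require: being zero-dimensional is needed for Theorem \ref{hhompower}, while being zero-dimensional together with $|X|\geq 2$ is the standing assumption of Proposition \ref{strengthenhhom}.

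First I would invoke Theorem \ref{hhompower}: since $X$ is zero-dimensional and has a dense Polish subspace, $X^\omega$ is h-homogeneous. This is precisely condition (\ref{hhom}) of Proposition \ref{strengthenhhom}. Next, since $X$ is zero-dimensional with $|X|\geq 2$, that proposition applies and its three conditions are equivalent; in particular (\ref{hhom}) implies (\ref{hhomemb}), so $X^\omega$ can be h-homogeneously embedded in $2^\omega$, as desired.

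There is no genuine obstacle at this stage: all the topological content has already been absorbed into Theorem \ref{hhompower} (the van Engelen--Medvedev result producing h-homogeneity from a dense Polish subspace) and into the cycle of implications in Proposition \ref{strengthenhhom} (whose passage from h-homogeneity back to an h-homogeneous embedding, via the intermediate condition (\ref{isolated}), supplies the required $\pi$-base of clopen sets and the homeomorphisms $\varphi_U$). The only thing to verify is that the hypotheses line up, which they do, so the corollary is a formal consequence of the two preceding results.
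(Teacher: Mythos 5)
Your proposal is correct and matches the paper's intended argument exactly: the paper's one-line proof (``Apply Proposition \ref{strengthenhhom}'') is shorthand for precisely the chain you describe, namely Theorem \ref{hhompower} yielding h-homogeneity of $X^\omega$ and then the equivalence $(\ref{hhom})\leftrightarrow(\ref{hhomemb})$ of Proposition \ref{strengthenhhom} (which indeed passes through condition $(\ref{isolated})$) yielding the h-homogeneous embedding into $2^\omega$. Your check that the hypotheses of both ingredients are satisfied is also accurate.
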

\begin{proof}
Apply Proposition \ref{strengthenhhom}.
\end{proof}

\begin{theorem}\label{sufficient}
Let $X$ be a zero-dimensional countably controlled space. Then $X^\omega$ is $\CDH$.
\end{theorem}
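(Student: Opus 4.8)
The plan is to deduce the statement from Theorem \ref{main} by verifying, for the space $X^\omega$ in place of the ``$X$'' there, its two hypotheses: that (a suitable copy of) $X^\omega$ is h-homogeneously embedded in $2^\omega$, and that $X^\omega$ is countably controlled. Since countable dense homogeneity and countable control are both topological invariants, we lose nothing by replacing $X^\omega$ with a homeomorphic copy living inside $2^\omega$, so it suffices to produce one such copy that is h-homogeneously embedded. First I would dispose of the trivial cases: if $|X|\leq 1$ then $X^\omega$ has at most one point and is therefore $\CDH$, so I may assume $|X|\geq 2$.

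The key observation is that a countably controlled space always has a dense Polish subspace. Indeed, since $X$ is separable, it admits a countable dense subset $D$; applying Definition \ref{cc} to $D$ yields a Polish subspace $G$ with $D\subseteq G\subseteq X$, and $G$ is dense in $X$ because it contains the dense set $D$. With a dense Polish subspace in hand, and recalling that $X$ is zero-dimensional with $|X|\geq 2$, Corollary \ref{densePolishcor} (which packages the deep Theorem \ref{hhompower} together with the equivalences of Proposition \ref{strengthenhhom}) immediately gives that $X^\omega$ can be h-homogeneously embedded in $2^\omega$. For the second hypothesis, I would invoke Proposition \ref{prodpreservecc} applied to the constant family $X_i=X$ to conclude that $X^\omega$ is countably controlled.

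Both hypotheses of Theorem \ref{main} now hold for the h-homogeneously embedded copy of $X^\omega$ in $2^\omega$ (countable control transferring to the copy because it is a topological property), so that copy, and hence $X^\omega$ itself, is $\CDH$. I do not expect a genuine obstacle here: all the real work has been front-loaded into Theorem \ref{main}, Corollary \ref{densePolishcor} and the results they cite, and the present argument is essentially an assembly of them. The only point demanding a small argument is the extraction of a dense Polish subspace from countable control, which is routine but relies crucially on separability to furnish a countable dense set to feed into Definition \ref{cc}; this is the step I would state most carefully.
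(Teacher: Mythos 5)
Your proposal is correct and follows exactly the paper's own route: dispose of the trivial case, note that countable control yields a dense Polish subspace, apply Corollary \ref{densePolishcor} and Proposition \ref{prodpreservecc} to verify the hypotheses of Theorem \ref{main}. The only difference is that you spell out the extraction of the dense Polish subspace, which the paper dismisses with ``clearly''.
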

\begin{proof}
The case $|X|=1$ is trivial, so assume that $|X|\geq 2$. Clearly, the fact that $X$ is countably controlled implies that $X$ has a Polish dense subspace. Therefore $X^\omega$ can be h-homogeneously embedded in $2^\omega$ by Corollary \ref{densePolishcor}. Furthermore, Proposition \ref{prodpreservecc} shows that $X^\omega$ is countably controlled. In conclusion, $X^\omega$ is $\CDH$ by Theorem \ref{main}.
\end{proof}
\begin{corollary}\label{everylambda}
If $Y$ is a $\lambda'$-set then $(2^\omega\setminus Y)^\omega$ is $\CDH$.
\end{corollary}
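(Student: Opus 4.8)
The plan is to derive this immediately from Theorem \ref{sufficient}. Set $X=2^\omega\setminus Y$. Since $X$ is a subspace of $2^\omega$, it is automatically zero-dimensional, so by Theorem \ref{sufficient} it suffices to check that $X$ is countably controlled; the rest of the work is just unwinding the definition of a $\lambda'$-set.

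So the core step is to verify countable control. I would fix an arbitrary countable $D\subseteq X$, so that $D\cap Y=\varnothing$, and aim to produce a Polish $G$ with $D\subseteq G\subseteq X$. Because $Y$ is a $\lambda'$-set and $D\subseteq 2^\omega$ is countable, the union $Y\cup D$ is a $\lambda$-set, so the countable set $D$ is $\Gd$ in the subspace $Y\cup D$. Concretely, this gives a $\Gd$ subset $G$ of $2^\omega$ with $G\cap(Y\cup D)=D$. I would then observe that $D\subseteq G$ together with $D\cap Y=\varnothing$ forces $G\cap Y=\varnothing$, i.e.\ $G\subseteq X$. Since $G$ is a $\Gd$ subset of the Polish space $2^\omega$, it is itself Polish, and we have $D\subseteq G\subseteq X$ as required. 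Thus $X$ is countably controlled, and Theorem \ref{sufficient} yields that $X^\omega$ is $\CDH$.

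There is no serious obstacle here: the only point requiring a moment's care is the passage from ``$D$ is $\Gd$ relative to $Y\cup D$'' to ``$D$ is contained in a $\Gd$ subset of $2^\omega$ disjoint from all of $Y$'', which works precisely because $D$ is disjoint from $Y$ and hence the trace of the witnessing $\Gd$ set on $Y$ must be empty. In fact this is exactly the observation invoked (without proof) in the verification of Theorem \ref{mainexample}, so the corollary is essentially a restatement of that remark combined with Theorem \ref{sufficient}.
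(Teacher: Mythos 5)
Your proof is correct and follows the same route the paper intends: the corollary is stated immediately after Theorem \ref{sufficient}, and the paper's proof of Theorem \ref{mainexample} already records that the definition of a $\lambda'$-set immediately makes $2^\omega\setminus Y$ countably controlled. Your verification of that step (pulling the relative $\Gd$ witness back to a $\Gd$ subset of $2^\omega$ disjoint from $Y$) is exactly the unwinding the paper leaves to the reader.
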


It seems natural to wonder whether, in the above theorem, it would be enough to assume that $X$ has a dense Polish subspace, instead of assuming that $X$ is countably controlled. The following simple proposition shows that this is not the case.

\begin{proposition}
There exists a zero-dimensional space $X$ such that $X$ has a dense Polish subspace while $X^\omega$ is not $\CDH$.
\end{proposition}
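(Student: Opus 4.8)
The plan is to produce a single space $X\subseteq 2^\omega$ that has a dense Polish subspace yet contains a closed copy of $\QQQ$. Granting this, the space $X^\omega$ will have a dense Polish (hence $\CB$) subspace while failing to be $\CB$ itself, so Proposition \ref{notcdh} will immediately yield that $X^\omega$ is not $\CDH$. In other words, the dividing line is precisely the distinction between merely having a dense Polish subspace and being countably controlled.

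For the construction, I would fix a nowhere dense copy $C$ of $2^\omega$ inside $2^\omega$ (for instance a Cantor set with empty interior). Let $Q$ be a countable dense subset of $C$, so that $Q\approx\QQQ$ and $\cl(Q)=C$, where closures are taken in $2^\omega$. Set $P=2^\omega\setminus C$ and $X=P\cup Q$. Since $C$ is nowhere dense, $P$ is a dense open subset of $2^\omega$, hence a dense Polish subspace of $X$. The key point is that $Q$ is closed in $X$: indeed $\cl_X(Q)=C\cap X=C\cap(P\cup Q)=Q$, because $C\cap P=\varnothing$ and $Q\subseteq C$. Thus $X$ is a zero-dimensional space with a dense Polish subspace that contains a closed copy of $\QQQ$, so by Theorem \ref{hurewicz} the space $X$ is not $\CB$.

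It remains to transfer these properties to $X^\omega$. On the one hand, $P^\omega$ is dense in $X^\omega$ (because $P$ is dense in $X$) and Polish (a countable product of Polish spaces is Polish), so $X^\omega$ has a dense $\CB$ subspace. On the other hand, fixing any $p\in X$, the slice $\{y\in X^\omega: y(n)=p\text{ for all }n\geq 1\}$ is a closed subspace of $X^\omega$ that is homeomorphic to $X$; hence the closed copy of $\QQQ$ living inside $X$ gives a closed copy of $\QQQ$ in $X^\omega$, and Theorem \ref{hurewicz} shows that $X^\omega$ is not $\CB$. Proposition \ref{notcdh} then applies and shows that $X^\omega$ is not $\CDH$.

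The construction is essentially routine, and I expect no genuine obstacle. The only point that requires care is arranging simultaneously that $Q$ is genuinely closed in $X$ and that $P$ remains a \emph{dense} Polish subspace; this is exactly what forces the choice of $C$ to be nowhere dense (to keep $P$ dense) and disjoint from $P$ (so that $\cl_X(Q)=Q$). Everything else reduces to a direct application of Hurewicz's theorem together with Proposition \ref{notcdh}.
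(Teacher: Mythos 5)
Your proof is correct. The space you build is essentially the same as the paper's: the paper takes $C=2^\omega\times\{z\}$ inside $2^\omega\times 2^\omega$, puts $Q$ a countable dense subset of $C$, and sets $X=Q\cup(2^\omega\times(2^\omega\setminus\{z\}))$, which is exactly your ``complement of a nowhere dense Cantor set together with a countable dense subset of it''. Where you genuinely diverge is in the final step. The paper observes that its $X$ is $\sigma$-compact, hence coanalytic, and is not Polish (because $Q$ is a closed countable crowded subspace), and then invokes Theorem \ref{coanalyticcdhpower} to conclude that $X^\omega$ is not $\CDH$. You instead bypass the definability machinery entirely: you verify directly that $X^\omega$ contains a closed copy of $\QQQ$ (via a slice homeomorphic to $X$) while having the dense Polish subspace $P^\omega$, and apply Proposition \ref{notcdh} together with Theorem \ref{hurewicz}. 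Your route is more self-contained --- it never uses that $X$ is definable, and it short-circuits the proof of Theorem \ref{coanalyticcdhpower}, which itself reduces to Proposition \ref{notcdh} --- at the cost of redoing by hand the transfer of the relevant properties to the $\omega$-th power; the paper's route is shorter on the page because the heavy lifting is already packaged in Theorem \ref{coanalyticcdhpower}. All the individual steps you take (density and Polishness of $P^\omega$, closedness of $Q$ in $X$ and of the slice in $X^\omega$, $\CB$-ness of Polish spaces) check out.
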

\begin{proof}
Fix $z\in 2^\omega$. Let $D=2^\omega\times (2^\omega\setminus\{z\})$, and fix a countable dense subset $Q$ of $2^\omega\times\{z\}$. Define
$$
X=Q\cup D\subseteq 2^\omega\times 2^\omega.
$$
It is clear that $D$ is a dense Polish subspace of $X$. Furthermore, $X$ is not Polish because $Q$ is a closed countable crowded subspace of $X$. Since $X$ is a coanalytic subspace of $2^\omega\times 2^\omega\approx 2^\omega$ (actually, it is $\sigma$-compact), if follows that $X^\omega$ is not $\CDH$ by Theorem \ref{coanalyticcdhpower}.
\end{proof}

Finally, we remark that, by Theorem \ref{consistentanswer}, it is not possible to prove in $\ZFC$ that being countably controlled (or even having a dense Polish subspace) is a necessary condition for the countable dense homogeneity of $X^\omega$.

\end{document}